\documentclass[12pt]{amsart}
\usepackage{amsfonts,amsmath,amssymb,amsthm,latexsym}
\usepackage{amscd,esint}
\usepackage[plainpages=false]{hyperref}
\usepackage{graphicx}
\usepackage{float}
\usepackage{subfigure}
\usepackage{xcolor}
\usepackage{enumitem}

\numberwithin{equation}{section}

\newcommand{\beq}{\begin{equation}}
\newcommand{\eeq}{\end{equation}}
\newcommand{\beqs}{\begin{eqnarray*}}
\newcommand{\eeqs}{\end{eqnarray*}}
\newcommand{\beqn}{\begin{eqnarray}}
\newcommand{\eeqn}{\end{eqnarray}}
\newcommand{\beqa}{\begin{array}}
\newcommand{\eeqa}{\end{array}}

\newcommand{\cH}{\mathcal{H}}

\newcommand{\cO}{\mathcal{O}}

\newcommand{\bS}{\mathbb{S}}
\newcommand{\bC}{\mathbb{C}}
\newcommand{\bR}{\mathbb{R}}

\newcommand{\bD}{\mathbb{D}}

\newcommand{\sq}{\sqrt{-1}}

\newcommand{\D}{\nabla}
\newcommand{\p}{\partial}
\newcommand{\ol}{\overline}

\newcommand{\lp}{\left(}
\newcommand{\rp}{\right)}

\newcommand{\eps}{\varepsilon}

\newcommand{\Om}{\Omega}

\DeclareMathOperator{\re}{Re}
\DeclareMathOperator{\im}{Im}

\newtheorem{prop}{Proposition}[section]
\newtheorem{theo}[prop]{Theorem}
\newtheorem{lem}[prop]{Lemma}

\newtheorem{cor}[prop]{Corollary}
\newtheorem{rem}[prop]{Remark}

\newtheorem{defi}[prop]{Definition}

\allowdisplaybreaks
\hypersetup{
  hidelinks,
  pdftitle={Singular solutions to the complex Monge--Ampere equation II}
}

\title{Singular solutions to the complex Monge--Amp\`ere equation II}
\author[J. Wang]{Jiaxiang Wang$^{*}$}
\address{School of Mathematical Sciences and LPMC,
Nankai University, Tianjin 300071,
People's Republic of China}
\email{wangjx\_manifold@126.com}

\author[W. Wang]{Wenlong Wang$^{\dagger}$}
\address{School of Mathematical Sciences and LPMC,
Nankai University, Tianjin 300071,
People's Republic of China}
\email{wangwl@nankai.edu.cn}

\thanks{*Jiaxiang Wang was partially supported by the National Natural Science Foundation of China
(Grant Nos.~12401068 and 12571060).}
\thanks{$\dagger$Wenlong Wang was partially supported by the National Natural Science Foundation of China (Grant Nos. 12471054 and 12001292) and the Fundamental Research Funds for the Central Universities, Nankai University (Grant No. 050-63263075). }

\begin{document}

\begin{abstract}
Continuing our work in \cite{WangWang25}, we construct new singular solutions to the complex Monge--Ampère equation with constant right-hand side in the Bedford--Taylor sense. Let \(\mathbb D\) denote the open unit disk in \(\mathbb C\). For \(n\geq2\), these solutions include:
\begin{enumerate}[
    label=(\roman*),
    widest=iii,
    align=right,
    labelindent=0pt,
    labelsep=0.3em,
    itemindent=0pt,
    leftmargin=*,
    nosep
]
    \item a bounded solution on \(\mathbb D^n\) that is
    discontinuous almost everywhere and nowhere classically
    differentiable;

    \item a continuous exhaustion of \(\mathbb C^n\) that
    does not belong to \(W^{2,1}_{\mathrm{loc}}(\mathbb C^n)\);

    \item a continuous solution on \(\overline{\mathbb D^n}\)
    whose Laplacian has a singular part.
\end{enumerate}
We construct these solutions from singular harmonic functions using partial Legendre transforms and holomorphic pullbacks.
\end{abstract}

\maketitle

\tableofcontents

\section{Introduction}\label{sec:introduction}
The complex Monge--Amp\`ere operator plays an important role in complex analysis and geometry. There are two fundamental approaches to defining weak solutions of the complex Monge--Amp\`ere equation. One is the pluripotential approach developed by Bedford and Taylor \cite{BT76}. The other is the viscosity approach, introduced by Crandall and Lions for nonlinear PDEs \cite{CL} and subsequently developed for the complex Monge--Ampère equation in \cite{EGZ,GLZ,HL09,HL13,WangY}.

Understanding the regularity of such generalized solutions has been a major focus of research on the complex Monge--Amp\`ere equation. A complementary direction is to construct singular solutions and investigate their singularity profiles, thereby clarifying the types of singularities that can occur and the limits of regularity. 

We study the complex Monge--Ampère equation in its simplest form,
\begin{equation}\label{CMAdensity}
\det(u_{i\bar j})=1,
\end{equation}
for locally bounded plurisubharmonic functions \(u\) on domains in \(\mathbb C^n\). In the Bedford--Taylor sense (see Section \ref{pre} for details), this equation is understood as
\begin{equation}\label{CMApluipotential}
\left(dd^cu\right)^n=\left(dd^c|z|^2\right)^n,
\end{equation}
where \(d^c=i(\bar\partial-\partial)\).

In \cite{WangWang25}, the authors constructed the following singular solution on \(\mathbb D\times\mathbb C\) of \eqref{CMAdensity} in both the Bedford--Taylor and
viscosity senses, namely
\begin{equation}\label{notdini}
u(z_1,z_2)=-\frac{\left|\re z_2\right|^2}{\log |z_1|}-2|z_1|^2\left(\log |z_1|-1\right).
\end{equation}
This solution is continuous but not Dini continuous; it belongs to \(W^{2,1}_{\mathrm{loc}}\) but not to \(W^{2,p}_{\mathrm{loc}}\) for any \(p>1\). In this paper, we continue this study in two directions: whether even \emph{more singular} solutions exist and whether \emph{entireness} and \emph{exhaustion} rule out highly singular solutions.

Throughout the paper,
for simplicity, we use $\beta_n$ to denote $\left(dd^c|z|^2\right)^n$; 
for a domain $\Omega\subset \bC^n$, denote by $PSH(\Omega)$ the plurisubharmonic functions on $\Omega$. 

In \cite{WangWang25}, we asked whether solutions of \eqref{CMAdensity} necessarily belong to \(W^{2,1}_{\mathrm{loc}}\). Extending the construction of \eqref{notdini}, we first obtain a continuous, non-proper counterexample.

\begin{theo}\label{theo:nonproper-w21-main}
There exists a non-proper function
\(u_1\in PSH(\mathbb D\times\mathbb C)\cap C(\overline{\mathbb D}\times\mathbb C)\)
that solves \eqref{CMAdensity} in both the Bedford--Taylor and viscosity senses. This solution is not Dini continuous and does not belong to \(W^{2,1}_{\mathrm{loc}}(\mathbb D\times\mathbb C)\).
\end{theo}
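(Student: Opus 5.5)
The plan is to extend the ansatz behind \eqref{notdini} to the family
\[
u(z_1,z_2)=\frac{|\re z_2|^2}{h(z_1)}+g(z_1),
\]
with $h>0$ and $g$ functions of $z_1$ to be chosen. Write $x=\re z_2$. The complex Hessian entries are
\[
u_{2\bar2}=\frac{1}{2h},\qquad u_{1\bar2}=x\,(1/h)_{z_1},\qquad u_{1\bar1}=x^2(1/h)_{z_1\bar z_1}+g_{z_1\bar z_1}.
\]
Substituting $a=1/h$ and expanding the determinant, the $x^2$-terms cancel exactly when $h_{z_1\bar z_1}=0$. In that case
\[
\det(u_{i\bar j})=\frac{g_{z_1\bar z_1}}{2h},
\]
so it suffices to take $h$ positive and harmonic and $g$ with $g_{z_1\bar z_1}=2h$. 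This recovers \eqref{notdini} for $h=-\log|z_1|$.

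\textbf{Choice of $h$.} I would take a compact polar set $E\subset\mathbb D$ and a positive harmonic function $h$ on a neighbourhood of $\overline{\mathbb D}\setminus E$ with $h\to+\infty$ at $E$. For example, $h=\sum_k c_k(-\log|z_1-a_k|)+C$, an Evans-type potential of a measure on a Cantor set. I would then take $g$ to be a logarithmic potential of $2h$ plus a harmonic correction, so that $g$ is continuous on $\overline{\mathbb D}$; this works because $h$ is locally integrable with only logarithmic blow-up. With these choices:
\begin{itemize}
\item $u$ is continuous on $\overline{\mathbb D}\times\mathbb C$, since $x^2/h\to0$ at $E\times\mathbb C$.
\item $u$ is plurisubharmonic off $E\times\mathbb C$, since the Hessian is positive there ($u_{2\bar2}>0$, determinant $1$).
\item Because $u$ is locally bounded and $E\times\mathbb C$ is closed and pluripolar, $u$ extends as a psh function across it.
\item The Bedford--Taylor measure $(dd^cu)^2$ puts no mass on pluripolar sets, so it equals $\beta_2$.
\item The viscosity property follows exactly as for \eqref{notdini} in \cite{WangWang25}: $u$ is classical off $E\times\mathbb C$, and there are no smooth test functions touching from above or below at points of $E\times\mathbb C$ where the equation would fail.
\item $u$ is non-proper because $u=g(z_1)$ is bounded on $\{\re z_2=0\}$.
\item $u$ is not Dini continuous because its modulus of continuity near $E$ is governed by $1/h\sim 1/|\log\mathrm{dist}|$.
\end{itemize}

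\textbf{Failure of $W^{2,1}_{\mathrm{loc}}$.} Among the second derivatives we have
\[
u_{1\bar1}\ \ge\ 2x^2\,\frac{|h_{z_1}|^2}{h^3},
\]
so it suffices to arrange that $\int |\nabla h|^2h^{-3}\,dA=\infty$ near $E$. This is the main obstacle. For a single logarithmic pole this integral is finite, which is why \eqref{notdini} lies in $W^{2,1}_{\mathrm{loc}}$. A naive sum of small-weight poles also fails. Near $a_k$ the contribution is roughly $c_k/H_k^2$, where $H_k$ is the background value of $h$ near $a_k$, so a sum of well-separated poles stays summable.

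My first attempt would be a hierarchical Cantor construction with weights $c_k$ and radii chosen at each generation. Each generation should contribute a definite amount to $\int|\nabla h|^2h^{-3}$, using the annular regions between clusters where $|\nabla h|$ is large but $h$ is still moderate. The weights should also keep $\sum c_k<\infty$, so that $g$ remains continuous. If this does not work, I would replace $h$ by a composition $h\circ\phi$ with $\phi$ a holomorphic map that pushes the singularity to the boundary of $\mathbb D$. This is permissible because harmonicity is preserved under holomorphic pullback. I would then verify divergence of the integral via the conformal change of variables.
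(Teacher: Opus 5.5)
\textbf{Gap in the failure of $W^{2,1}_{\mathrm{loc}}$.} Your ansatz, the determinant computation, the continuity, the pluripolar extension, and the non-properness all match the paper; see Proposition~\ref{cor:reciprocal-harmonic}, whose $b$ is your $h$. The real gap is the failure of $W^{2,1}_{\mathrm{loc}}$. Your route there is not just unfinished; it cannot work within your ansatz. For harmonic $h>0$ we have $\Delta(1/h)=2|\nabla h|^2h^{-3}\ge0$. So $1/h$ is subharmonic off $E$ and tends to $0$ at $E$. By the removable singularity theorem it extends, by $0$ on the closed polar set $E$, to a bounded subharmonic function on $\mathbb D$. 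Its Riesz measure is locally finite and equals $2|\nabla h|^2h^{-3}\,dA$ off $E$. Hence $\int_K|\nabla h|^2h^{-3}\,dA<\infty$ for every compact $K\subset\mathbb D$. This holds for any hierarchical arrangement of poles and for any pullback $h\circ\phi$. You also need $h\in L^1_{\mathrm{loc}}$ anyway to solve $g_{z_1\bar z_1}=2h$. Together these give $u_{1\bar1}=2x^2|h_{z_1}|^2h^{-3}+2h\in L^1_{\mathrm{loc}}$ always. The paper indeed remarks that its non-$W^{2,1}_{\mathrm{loc}}$ solutions have locally integrable Laplacians.

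\textbf{The missing idea.} $W^{2,1}$ controls the full real Hessian, not just its trace. In two real dimensions, the $L^1$ norm of the Hessian over a disk dominates a scale-invariant oscillation:
\[
\int_{B_R(p)}|D^2f|\,dV\ \ge\ \int_0^{2\pi}f(p+Re^{\mathrm{i}\theta})\,d\theta-2\pi f(p),
\]
which is Proposition~\ref{prop:real-hessian-bound}. Take $f=x^2/h$ near a pole of weight $c_k$ sitting on a background value $H_k$. The tangential second derivative $f_r/r$ and the radial one $f_{rr}$ are each of size about $c_kx^2/(r^2h^2)$. Their integrals over the disk are of order $x^2/H_k$, independent of $c_k$. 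They cancel in the Laplacian down to $2c_k^2x^2/(r^2h^3)$, which gives your $c_k/H_k^2$. So your conclusion that a naive sum of small-weight, well-separated poles fails is wrong; that is exactly what works.

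The paper takes $b=L_0+\sum_{k\ge4}2^{-k}L_{2^{-k}}$ with $L_p=\log(2/|z_1-p|)$. On $\partial B_{2^{-k-3}}(2^{-k})$ one has $b=k\log2+O(1)$, while $1/b=0$ at the pole. Each of the disjoint sets $B_{2^{-k-3}}(2^{-k})\times Q$ therefore contributes at least $c/k$ to $\int|D^2u_1|$, and the harmonic series diverges. The same center-versus-circle gap $c/k$, at poles accumulating at the extra pole $0$, also gives a concrete proof of non-Dini continuity at $(0,1)$.

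\textbf{Viscosity.} Your claim that no test functions touch at points of $E\times\mathbb C$ is unjustified as stated. It is unnecessary anyway: continuity plus the Bedford--Taylor equation give the viscosity property by Proposition~\ref{prop:bt-viscosity}.
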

This contrasts sharply with the real Monge--Ampère equation, for which convex Alexandrov solutions with density bounded above and below by positive constants belong to \(W^{2,1}_{\mathrm{loc}}\); see \cite{DePhilippisFigalli13,Mooney15Partial,Mooney15W21}.

The same approach also yields a bounded solution that is discontinuous almost everywhere and nowhere classically differentiable.

\begin{theo}\label{theo:dense-poles-main}
There exists a function $u_2\in PSH(\bD^2)\cap L^\infty(\bD^2)$
that solves \eqref{CMAdensity} in the Bedford--Taylor sense.
This solution is discontinuous almost everywhere and nowhere
classically differentiable. Moreover, \(u_2\notin W^{2,1}_{\mathrm{loc}}(\mathbb D^2)\).
\end{theo}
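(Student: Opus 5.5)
The plan is to reduce the real Monge--Amp\`ere structure in one real direction to a linear problem via a partial Legendre transform. Take \(u\) independent of \(\im z_2\), with \(x=\re z_2\), and look for solutions of the form
\[
u(z_1,z_2)=-\frac{x^2}{\varphi(z_1)}+\psi(z_1),\qquad \varphi<0 .
\]
This ansatz generalizes \eqref{notdini}, which is the case \(\varphi=\log|z_1|\).

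\textbf{Reduction.} Write \(u_{2\bar2}=\tfrac14u_{xx}\) and \(u_{1\bar2}=\tfrac12u_{1x}\). Taking the Legendre transform in \(x\) turns \(\det(u_{i\bar j})=1\) into a linear equation of the form \(c\,\p_{z_1}\p_{\bar z_1}u^*+u^*_{\xi\xi}=0\), i.e. harmonicity in the three real variables \((z_1,\xi)\). For the ansatz, \(u^*=\tfrac14\xi^2\varphi-\psi\). Harmonicity then splits into two conditions:
\begin{itemize}
\item \(\varphi\) is harmonic on \(\bD\setminus\{\text{poles}\}\);
\item \(\psi_{1\bar1}=-c'\varphi\) for an explicit constant \(c'>0\).
\end{itemize}
Equivalently, one can verify the equation directly by computing the \(2\times2\) determinant for the ansatz, wherever \(\varphi\) is smooth.

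\textbf{Choice of \(\varphi\).} Fix a dense sequence \(\{a_k\}\subset\bD\) and weights \(c_k>0\) with \(\sum c_k<\infty\). Set
\[
\varphi=\sum_k c_k\log\Bigl|\tfrac{z_1-a_k}{1-\bar a_kz_1}\Bigr|-1 .
\]
This is a negative superharmonic-limit function, harmonic off the poles and equal to \(-\infty\) at each \(a_k\). Let \(\psi\) be a Newtonian potential solving \(\psi_{1\bar1}=-c'\varphi\ge0\). Since \(\varphi\in L^p\) for all \(p<\infty\), \(\psi\) is continuous and subharmonic. Then
\[
0\le -x^2/\varphi\le x^2\le 1
\]
on \(\bD^2\), so \(u_2:=u\) is bounded.

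\textbf{Bedford--Taylor property.} Let \(\varphi_N\) be the \(N\)-th partial sum minus \(1\), and \(\psi_N\) the corresponding potentials. Each \(u_N=-x^2/\varphi_N+\psi_N\) is continuous off finitely many complex lines \(\{z_1=a_k\}\), and is a genuine solution. This follows as in \cite{WangWang25} for \eqref{notdini}: off the lines the equation is classical, and across the lines one checks plurisubharmonicity and that \((dd^cu_N)^2\) puts no mass on pluripolar lines for bounded psh functions. Since \(\varphi_N\downarrow\varphi\), we have \(-1/\varphi_N=1/|\varphi_N|\) decreasing. After arranging \(\psi_N\) monotone as well (adding the nonnegative increments' potentials with fixed sign, or absorbing a correction), the sequence \(u_N\) decreases to \(u_2\). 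Bedford--Taylor continuity of \((dd^c\cdot)^2\) along decreasing sequences of bounded psh functions then gives \((dd^cu_2)^2=\beta_2\).

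\textbf{Singularity properties.} At any point with \(x\neq0\), the function \(1/|\varphi|\) is not continuous. Indeed, \(\varphi(z_1)\) is finite off a polar set, while \(\liminf\varphi=-\infty\) along the dense poles, so the liminf of \(-x^2/\varphi\) drops to \(\psi\). This yields discontinuity on the full-measure set \(\{x\ne0,\ \varphi(z_1)>-\infty\}\). The same oscillation in every neighborhood also rules out classical differentiability at every point; at \(x=0\) one uses the oscillation of \(\p_x^2u=-2/\varphi\).

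\textbf{Main obstacle: \(u_2\notin W^{2,1}_{\mathrm{loc}}\).} Near a pole \(a_k\), the mixed and \(z_1\)-second derivatives of \(x^2/\varphi\) behave like
\[
\frac{|\nabla\varphi|^2}{|\varphi|^3}\sim\frac{c_k^2}{r^2\,(c_k|\log r|)^3}.
\]
Integrating over a disc of radius \(\rho_k\) gives a contribution of order \(1/(c_k\log^2\rho_k)\). The plan is to choose the \(c_k\) decaying so fast relative to disjoint neighborhoods of \(a_k\) inside any fixed ball that these contributions diverge. The remaining delicate part is justifying that the other terms of \(\varphi\) do not cancel this lower bound, which is where the estimates must be done carefully.
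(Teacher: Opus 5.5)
Your construction matches the paper's: your $\varphi$ is the paper's $-b$, and your Newtonian potential $\psi$ replaces the explicit sum of the $R_p$. Your Bedford--Taylor verification (monotone partial sums plus a constant correction) and your discontinuity argument on $\{x\neq0,\ \varphi(z_1)>-\infty\}$ are also essentially the paper's.

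\emph{Nowhere differentiability fails for the function you define.} On $\bD^2$ the variable $x=\re z_2$ takes the value $0$, and there $u_2$ is differentiable. Indeed, $\psi\in C^1$ because its density lies in $L^p$ for some $p>2$, and $0\le -x^2/\varphi\le x^2$. Hence, whenever $\re z_2^0=0$,
$u_2(z_1,z_2)-u_2(q,z_2^0)=D\psi(q)(z_1-q)+o(|z_1-q|)+O(x^2)$.
Oscillation of $\partial_x^2u=-2/\varphi$ only obstructs second-order differentiability. You must first translate so that $\re z_2$ stays away from $0$; the paper uses $z_2\mapsto z_2+2$, which keeps $u_2$ bounded.

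A second gap concerns points $(q,z_2^0)$ with $\varphi(q)=-\infty$ and $x^0\neq0$. There $u$ is continuous, since upper semicontinuity of $\varphi$ forces $1/\varphi\to0$, so ``the same oscillation'' proves nothing and you need a rate. The paper argues as follows. Differentiability at such a point, together with $u(q,z_2^0)=\psi(q)$ and $\psi\in C^1$, gives $(x^0)^2/|\varphi(z_1)|\le C|z_1-q|$ near $q$. Hence $|\varphi(z_1)|\ge c|z_1-q|^{-1}$, which contradicts $\varphi\in L^2_{\mathrm{loc}}$, a fact you already have.

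\emph{Your plan for $u_2\notin W^{2,1}_{\mathrm{loc}}$ is incomplete, and it is also unnecessary.} Because the poles are dense, $u$ is $C^2$ on no open set, and every neighborhood of $a_k$ contains other poles. So there is no region in which the classical Hessian of $x^2/\varphi$ near a single pole can be estimated.

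Even for an isolated pole, $\varphi\approx c_k\log|z_1-a_k|-1$, the heuristic is off. First, $|\nabla\varphi|^2/|\varphi|^3$ is, up to a factor, the $z_1$-Laplacian of $1/\varphi$. Its integral over a disc around the pole is at most of order $c_k$, which is summable. Second, the large part of the real Hessian is $\partial_{z_1}^2\varphi/\varphi^2$, which your estimate omits. Third, the asymptotics $|\varphi|\sim c_k|\log r|$ behind your bound $1/(c_k\log^2\rho_k)$ hold only for $r<e^{-1/c_k}$.

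The missing idea is a soft one. Suppose $u\in W^{2,1}_{\mathrm{loc}}$.
\begin{enumerate}
\item By Sobolev slicing, $u(\cdot,z_2)\in W^{2,1}_{\mathrm{loc}}(\bD)$ for a.e.\ $z_2$.
\item In real dimension two, $W^{2,1}$ embeds into continuous functions.
\item For a subharmonic slice, the continuous representative agrees with the slice everywhere, since both are limits of the same disc averages. So almost every slice would be continuous.
\end{enumerate}
This contradicts the discontinuity you already established on every slice with $x\neq0$, and it requires no tuning of the weights $c_k$.
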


We next show that the failure of $W^{2,1}_{\mathrm{loc}}$ regularity
also occurs for continuous exhaustion solutions on $\bC^2$.

\begin{theo}\label{theo:proper-w21-main}
There exists a continuous exhaustion \(u_3\) of \(\bC^2\)
that solves \eqref{CMAdensity} in both the Bedford--Taylor
and viscosity senses and is invariant under the anti-diagonal
\(\mathbb S^1\)-action
\((z_1,z_2)\mapsto(e^{i\theta}z_1,e^{-i\theta}z_2)\).
This solution is not Dini continuous and belongs neither to
\(W^{2,1}_{\mathrm{loc}}(\bC^2)\) nor to
\(W^{1,p}_{\mathrm{loc}}(\bC^2)\) for any \(p>2\).
\end{theo}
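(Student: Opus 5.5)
Our plan is to obtain \(u_3\) from a singular solution of the type in Theorem~\ref{theo:nonproper-w21-main} by a holomorphic pullback adapted to the anti-diagonal \(\mathbb S^1\)-action. The key observation is that the map \(\Phi(z_1,z_2)=(z_1z_2,\; \tfrac12(|z_1|^2-|z_2|^2))\) is invariant under \((z_1,z_2)\mapsto(e^{i\theta}z_1,e^{-i\theta}z_2)\). An \(\mathbb S^1\)-invariant function \(u=F(w,t)\) with \(w=z_1z_2\) and moment map \(t=\tfrac12(|z_1|^2-|z_2|^2)\) reduces, via the partial Legendre transform in the direction of the action, to a harmonic-type equation. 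Concretely, we would write \(u\) as a Legendre transform in the fibre variable of a function \(\psi(w,\xi)\). The equation \(\det(u_{i\bar j})=1\) then becomes a linear equation for \(\psi\): a Laplacian in \(w\) plus a degenerate term in \(\xi\), in the spirit of the Gibbons--Hawking/LeBrun ansatz. Solutions \(\psi\) can therefore be produced from harmonic functions, and the regularity of \(u\) is read off from them.

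The steps, in order, are as follows.
\begin{enumerate}[label=(\arabic*),nosep]
\item Derive the reduction. For \(u\) invariant under the action and with \(\partial_t\)-convexity, the partial Legendre transform \(\psi(w,\xi)=\sup_t(\xi t-F)\) converts \((dd^cu)^2=\beta_2\) into the linear equation \(\psi_{w\bar w}+c\,\xi^{-1}\partial_\xi(\xi\,\cdot)\)-type\(\,\psi=0\). Equivalently, the potential \(V\) in the Gibbons--Hawking form is harmonic on \(\mathbb R^3\). The flat metric corresponds to \(V=1/(2|x|)\), coming from a single pole.
\item Perturb the flat potential \(V_0=\frac{1}{2|x|}\) by a positive harmonic function \(h\) on \(\mathbb R^3\setminus\{0\}\) that is singular along the axis but is a \emph{finite} perturbation at infinity. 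This makes the resulting \(u\) comparable to \(|z|^2\) at infinity, and hence an exhaustion. The singular profile should mimic \eqref{notdini}, i.e.\ give \(|\re z_2|^2/\log|z_1|\)-type behaviour, which is exactly what breaks Dini continuity. For the failure of \(W^{2,1}\), use a profile with \(\int|D^2u|=\infty\) near the singular set, as in Theorem~\ref{theo:nonproper-w21-main}; the extra \(W^{1,p}\), \(p>2\), failure should come from a gradient behaving like \((r|\log r|)^{-1/2}\)-type blowup along a real codimension-two set.
\item Recover \(u\) by inverse Legendre transform. Check continuity and plurisubharmonicity, the latter from convexity in \(\xi\) and the positivity of \(V\).
\item Verify the equation in the Bedford--Taylor sense across the singular locus. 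Away from it, \(u\) is smooth with \(\det=1\). Since \(u\) is continuous and the singular set is pluripolar, or of measure zero with the mass of \((dd^cu)^2\) not concentrating there, one gets \((dd^cu)^2=\beta_2\); this uses the same capacity/approximation argument as for Theorem~\ref{theo:nonproper-w21-main}. The viscosity property follows from continuity plus the local comparison used there.
\end{enumerate}

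The main obstacle is the combination of \emph{global properness} with the singularity. The singular harmonic function must be chosen so that the Legendre-inverse function \(u\) is still defined on all of \(\mathbb C^2\), is continuous, and grows to infinity. We would first try to localize the singular perturbation of \(V\) to a compact region (e.g.\ a superposition along a bounded segment of the axis). We would then show by a maximum-principle comparison with \(V_0\) that \(u-|z|^2\) stays bounded, which yields both properness and the required global control.
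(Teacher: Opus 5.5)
The overall framework matches the paper's: Legendre transform along the $\mathbb S^1$-direction, a harmonic potential on $\bC\times\bR$, superposition, pullback, and pluripolar extension. But the proposal has two genuine gaps.

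A side remark on step (1). The adapted coordinates are $(z_1,x_2)=(\zeta_1\zeta_2,\log|\zeta_2|)$ coming from $\Pi$, not the flat moment map $\tfrac12(|\zeta_1|^2-|\zeta_2|^2)$. The dual variable $t$ is the moment map of $u$ itself, and the reduced equation is exactly $4v_{z_1\bar z_1}+v_{tt}=0$, with no degenerate term.

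\textbf{Gap 1: the harmonic function is never constructed.} Step (2) as written is inconsistent. By B\^ocher's theorem, a positive harmonic function on $\bR^3\setminus\{0\}$ has the form $a+b/|x|$. That only reweights the pole at the origin or adds a Taub--NUT constant, and produces no non-Dini behaviour. What you are missing is a concrete configuration of sources away from the origin, together with an analysis of the transform at those sources, where the supremum can sit at an endpoint.

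The paper takes $v_3=h_W+C+S$. This is a positive superposition of translates of $h_W$ reflected in $t$, with poles at $(1,s_j)$, $s_j=1+2^{-j}$, and at $(\xi_k,1)$, $\xi_k=1+2^{-k}$, all accumulating at $(1,1)$.
\begin{itemize}
\item The isolated poles turn out to be removable, because $(v_3)_t(\xi_k,t)\to+\infty$ as $t\to1^-$.
\item At $z_1=1$ the endpoint slope $x_*$ is finite. So for $x_2\ge x_*$ the supremum is attained at $t=1$, and continuity there needs a separate argument.
\item The two families of poles together give the gap estimate \eqref{prw21:eq:boundary-gap}, namely $U_3(z_1,x_2)-U_3(\xi_k,x_2)\ge a/k$ on $\partial D_k\times I$.
\end{itemize}
That single estimate produces all three failures:
\begin{itemize}
\item non-Dini continuity;
\item non-$W^{2,1}$, via Proposition~\ref{prop:real-hessian-bound} and $\sum_k1/k=\infty$;
\item non-$W^{1,p}$, since an oscillation $a/k$ across disks of radius $2^{-k-3}$ contradicts Morrey's embedding on two-dimensional slices.
\end{itemize}
Your own $W^{1,p}$ heuristic would not suffice: a gradient of size $(r|\log r|)^{-1/2}$ lies in $L^p$ in two transversal dimensions for every $p\le4$.

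\textbf{Gap 2: the properness argument cannot work.} For your perturbation $V=V_0+h$ with $h>0$, the difference $u-|\zeta|^2$ is never bounded.
\begin{itemize}
\item If $h$ tends to a positive constant at infinity, you get Taub--NUT-type behaviour. For instance, LeBrun's $\Phi_m(0,\zeta_2)$ grows only like $m^{-1}(\log|\zeta_2|)^2$.
\item If $h\to0$, a Kelvin transform and B\^ocher's theorem give $h\sim M/|x|$ with $M>0$. Hence $V\sim m/(2|x|)$ with $m>1$.
\item Negative point or line sources cannot restore $m=1$, because $u_{z_2\bar z_2}=1/v_{tt}$ forces $V>0$.
\item A lighter pole at the origin cannot restore it either. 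For $v=\lambda h_W$ the pullback is $\lambda(|\zeta_2|^{2/\lambda}+|\zeta_1|^2|\zeta_2|^{2-2/\lambda})$, which is unbounded near $\{\zeta_2=0\}$ when $\lambda<1$. Weight exactly one is what makes \eqref{tag-1} extend $u_3$ smoothly across the axes.
\end{itemize}
Integrating $V$ along $\{z_1=0\}$ gives $v_t(0,t)=\tfrac m2\log t+O(1)$ as $t\to+\infty$, so $u(0,\zeta_2)$ grows like $|\zeta_2|^{2/m}$. For the paper's $v_3$,
\[
m=1+2\sum_j2^{-j}+2\sum_{k\ge4}3^{-k}=\frac{82}{27},
\]
and \eqref{tag-3}--\eqref{tag-4} show that $u_3$ grows like $|\zeta_2|^{27/41}$ along $\{\zeta_1=0\}$.

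The paper proves properness directly from the Legendre supremum instead. Three facts do the work:
\begin{itemize}
\item the test values $t\in\{-1,0,1\}$ are admissible at every $z_1\ne0$, including at the poles, where the supremum is taken over $t\le1$;
\item the reflected translates are bounded above for $|t|\le1$;
\item $h_W(z_1,-1)=h_W(z_1,1)-\log|z_1|$.
\end{itemize}
Together these give $u_3\ge4\log\max\{1,|\zeta_1|,|\zeta_2|\}-C_0$. This logarithmic lower bound, not a quadratic comparison, is what you need.
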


Subtracting a constant from \(u_3\) and restricting it to a connected component of a sublevel set yields the following corollary.

\begin{cor}\label{cor:bounded-sublevel}
There exists a bounded hyperconvex domain \(\Omega\subset\mathbb C^2\) and a function \(v\in PSH(\Omega)\cap C(\overline{\Omega})\) that solves \eqref{CMAdensity} in both the Bedford--Taylor and viscosity senses, with zero boundary values. This solution is not Dini continuous and does not belong to \(W^{2,1}_{\mathrm{loc}}(\Omega)\). Both \(\Omega\) and \(v\) are invariant under the above \(\mathbb S^1\)-action.
\end{cor}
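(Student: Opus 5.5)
We derive the corollary directly from Theorem~\ref{theo:proper-w21-main}. Let \(u_3\) be the continuous exhaustion given there and fix a constant \(c>\min_{\bC^2}u_3\). The minimum exists because \(u_3\) is a continuous exhaustion. Put \(w=u_3-c\). The sublevel set \(\{w<0\}\) is open, and it is relatively compact because \(u_3\) is an exhaustion. Let \(\Omega\) be a connected component of \(\{w<0\}\), and set \(v=w|_{\Omega}\).

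The plan is to verify the required properties one at a time.

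\emph{The equation.} Subtracting a constant does not change \((dd^c u)^n\), and the viscosity notion is local and invariant under adding constants. Hence \(v\) solves \eqref{CMAdensity} in both senses on \(\Omega\).

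\emph{Continuity and boundary values.} By continuity, \(v\in C(\overline\Omega)\). On \(\partial\Omega\) we have \(w=0\). Indeed, a boundary point \(p\) of a component of an open set is not in that open set, so \(w(p)\ge 0\). On the other hand, \(p\) is a limit of points where \(w<0\), so \(w(p)\le 0\).

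\emph{Hyperconvexity.} \(\Omega\) is bounded, and \(v\) is a continuous negative plurisubharmonic exhaustion of \(\Omega\): it tends to \(0\) at the boundary, so each set \(\{v<-\epsilon\}\cap\Omega\) is relatively compact in \(\Omega\). This is exactly hyperconvexity. I would also note that \(\Omega\) is connected by construction and nonempty because \(c>\min u_3\).

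\emph{Invariance.} The set \(\{w<0\}\) is invariant under the \(\mathbb S^1\)-action. Each orbit is connected (a circle or a point) and lies in a single component. Hence the action, being through a connected group of homeomorphisms, preserves each component. So \(\Omega\) and \(v\) are both invariant.

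\emph{Singular regularity.} This is the step I expect to be the main obstacle. Failure of Dini continuity and failure of \(W^{2,1}_{\mathrm{loc}}\) are local properties, but Theorem~\ref{theo:proper-w21-main} only says they fail somewhere on \(\bC^2\). We must make sure \(\Omega\) contains a point where they fail. I would handle this by choosing \(c\) so that \(\Omega\) contains such a point.

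Concretely, let \(p_0\) be a point near which \(u_3\) fails to be Dini continuous and fails to be \(W^{2,1}\). If the singular set is not a single point, take \(p_0\) to be a point of the singular set. Choose \(c>u_3(p_0)\), which is possible because \(u_3\) is real-valued and continuous. Take \(\Omega\) to be the component containing \(p_0\). Then \(\Omega\) is an open neighborhood of \(p_0\), so the local failures at \(p_0\) transfer to \(v\) on \(\Omega\).

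If the non-\(W^{2,1}\) behavior were only visible globally, I would instead use that the theorem's failure is of the form \(u_3\notin W^{2,1}(U)\) for some bounded open set \(U\). Taking \(c>\max_{\overline U}u_3\) gives \(\overline U\subset\{w<0\}\). I would then choose \(U\) connected, for example a ball, so that it lies in a single component \(\Omega\).

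Since the construction in the paper is explicit, the singular locus is presumably a specific invariant set, such as a complex line or the origin. In that case this step is routine once that locus is identified.
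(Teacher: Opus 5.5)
Your proof is correct and is essentially the paper's argument. The paper takes a closed ball $K$ containing $q_0$ and all the sets $\Psi(B_k)$, fixes $c>\max_K u_3$, and lets $\Omega$ be the component of $\{u_3<c\}$ containing $K$; this is your fallback version. That version is the one to use: the ball must contain both the Dini-failure point and the $W^{2,1}$ witnesses, and the theorem statement alone does not provide a single common point $p_0$. Your checks of the zero boundary values, hyperconvexity, and $\mathbb S^1$-invariance of the component supply details that the paper only asserts.
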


B{\l}ocki asked whether the continuous solution
of the complex Monge--Amp\`ere equation on a bounded hyperconvex
domain, with zero boundary values and a smooth density bounded
away from zero, must be smooth in the interior; see
\cite[p.~451]{Blocki14CRMA} and \cite[Question~23]{DGZ16}.
Corollary~\ref{cor:bounded-sublevel} provides a highly singular counterexample, even with constant density and anti-diagonal \(\mathbb S^1\)-symmetry.

The non-$W^{2,1}_{\mathrm{loc}}$ solutions above nevertheless have locally integrable distributional Laplacians. The following theorem shows that even this weaker regularity can fail.

\begin{theo}\label{theo:singular-laplacian-main}
There exists a function
$u_4\in PSH(\bD^2)\cap C(\overline{\bD^2})$ that solves
\eqref{CMAdensity} in both the Bedford--Taylor and viscosity senses.
The distributional Laplacian of $u_4$ has a nonzero singular part.
\end{theo}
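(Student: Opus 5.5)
We will build $u_4$ from a singular harmonic function via the partial Legendre transform, as for \eqref{notdini}. The mechanism is the following. Let $u(z_1,z_2)$ be independent of $\im z_2$, write $z_2=x+iy$, and set
\[
v(z_1,\xi)=\sup_x\bigl(x\xi-u(z_1,x)\bigr).
\]
Then $\det(u_{i\bar j})=1$ becomes, up to normalizing constants, the statement that $v$ is harmonic in $z_1$ for each fixed $\xi$, with $v$ convex in $\xi$ (cf.\ the real-analytic setting of \cite{WangWang25}).

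Conversely, start from a family $h(z_1,\xi)$, harmonic in $z_1$ and convex in $\xi$, and take the inverse transform. We choose
\[
h(z_1,\xi)=\tfrac{1}{2}\xi^2 a(z_1)+b(z_1),
\]
with $a>0$ harmonic and $b$ harmonic. The Legendre transform in $\xi$ gives
\[
u=\frac{x^2}{2a(z_1)}-b(z_1),
\]
corrected so that the equation holds exactly. Matching the computation behind \eqref{notdini}, where $a=-\log|z_1|$, the term $b$ must satisfy $\Delta b$ proportional to $a$ (so $b$ is not harmonic but solves a Poisson equation). The plan is therefore:

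1. Choose a positive harmonic $a$ on $\bD$ whose boundary measure has a singular part. The natural choice is the Poisson kernel $a=P(z_1,1)=\re\frac{1+z_1}{1-z_1}$, or a continuous-up-to-the-boundary variant such as $a=\epsilon+\re\frac{1+z_1}{1-z_1}$. The difficulty is that $1/a$ must be continuous on $\overline{\bD}$ with the singular behaviour visible in the Laplacian, so it is better to choose $a$ on a disk strictly containing the relevant region and place the singular set inside.

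2. Put the singular measure in the interior instead. Take $a$ positive superharmonic with $\Delta a=-\mu$, for $\mu$ a singular measure (say arc length on a segment, or a Cantor measure). Check via the Bedford--Taylor formalism (smooth approximation plus monotone convergence) that the Monge--Ampère measure of $x^2/(2a)-b$ acquires only the absolutely continuous contribution. The term $x^2\,\Delta(1/a)$ contains $x^2\mu/a^2$, which is a singular part of $\Delta u_4$. Equivalently, restrict the proof of Theorem~\ref{theo:nonproper-w21-main} to a construction where $a$ is harmonic away from a singular set along which $\nabla a$ jumps.

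3. Verify the viscosity sense by comparison with smooth approximants, using continuity of $u_4$. Then pull back by the holomorphic map $z_2\mapsto$ a suitable map onto $\bD$ to get the domain $\bD^2$ and continuity on $\overline{\bD^2}$.

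The main obstacle is step 2. We need to show that a kink of $a$ along a curve, which forces $\Delta u$ to carry a measure on a real hypersurface, does not create extra Monge--Ampère mass. The reason to expect this is that $(dd^c u)^2$ involves only products in which the singular direction appears once, paired with the vanishing $\p_x\p_{\bar z_1}$-type structure of the Legendre-dual side, whose mixed term $x\nabla a/a^2$ is continuous. We will verify this by an explicit computation of $dd^c u\wedge dd^c u$ for smooth $a_\epsilon\to a$, showing that the cross terms converge weakly to zero on the singular set.
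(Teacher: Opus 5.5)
There is a genuine gap, and it is in Step~2, which carries the whole argument. With your ansatz, the singular part of $\Delta u$ and a singular Monge--Amp\`ere defect are produced together and cannot be separated. With $x=\re z_2$ and $u=\frac{x^2}{2a}-b$, a direct computation gives $u_{2\bar2}=\frac1{4a}$, $u_{1\bar2}=-\frac{x\,a_{z_1}}{2a^2}$ and
\[
\det(u_{j\bar k})=-\frac{b_{z_1\bar z_1}}{4a}-\frac{x^2a_{z_1\bar z_1}}{8a^3}.
\]
So the equation forces $a$ to be harmonic and $b_{z_1\bar z_1}=-4a$, which is the setting of Proposition~\ref{cor:reciprocal-harmonic}. (Relatedly, the dual condition is $4v_{z_1\bar z_1}+v_{tt}=0$, harmonicity in all three real variables, not harmonicity in $z_1$ for each fixed $t$.) Suppose instead $\Delta a=-\mu$ with $\mu$ singular. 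Smoothing $a$ and $b$ by the same mollifier and applying Lemma~\ref{convergence-theorem} shows that $(dd^cu)^2-\beta_2$ is a positive multiple of $x^2a^{-3}\,\mu\otimes dV(z_2)$. The singular part of $\Delta u$ you want is $\frac{x^2}{2a^2}\,\mu\otimes dV(z_2)$. These two measures are mutually absolutely continuous: the defect vanishes only if $1/a=0$ $\mu$-almost everywhere, and then your singular Laplacian vanishes as well. Your heuristic looks at the wrong entry. The mixed term $u_{1\bar2}$ is harmless, but the diagonal product $u_{1\bar1}u_{2\bar2}$ pairs the singular part of $u_{1\bar1}$ with $u_{2\bar2}=1/(4a)$, which is continuous and positive on $\operatorname{supp}\mu$. (Step~1 is beside the point, since the theorem concerns the interior Laplacian. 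In Step~3, only affine maps in $z_2$, combined with a constant factor, preserve the equation, so one simply rescales.)

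What is missing is a mechanism that makes the $z_2\bar z_2$ entry degenerate exactly where a singular measure sits in the $z_1\bar z_1$ entry. In your ansatz $u_{2\bar2}=1/(4a)$ does not depend on $x$ and degenerates only where $a=+\infty$, so you need a dual that is not quadratic in $t$. The paper puts the singular measure into a separate summand $g(z_1)=\int_E\log|z_1-w|\,d\nu(w)$, the continuous logarithmic potential of the Cantor measure, and sets $u_4=W+g$. Here $W$ is the normalized partial Legendre transform, along the branch through $t=0$, of the Newton potential $v$ of $\nu\otimes|s|^{1/4}\chi_{[-L,L]}(s)\,ds$ on $\bC\times\bR$. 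Since the weight increases away from $s=0$, one gets $(v_\eps)_{tt}\ge c\,(\operatorname{dist}(z_1,E)+\eps)^{d-7/4}$ for $|t|\le\kappa(\operatorname{dist}(z_1,E)+\eps)$, which confines $|t_\eps|\le\eps/4$ on $\operatorname{supp}\Delta g_\eps$. After the substitution $dt=dx_2/(v_\eps)_{tt}$, the cross term $(W_\eps)_{2\bar2}(g_\eps)_{1\bar1}=\Delta g_\eps/(4(v_\eps)_{tt})$ has $L^1_{\mathrm{loc}}$ norm $O(\eps)$. The defect $-(\Delta_{z_1}v_\eps+(v_\eps)_{tt})/(v_\eps)_{tt}$ caused by the superharmonicity of $v$ is $O(\eps^{1/4})$. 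The convergence theorem then gives $(dd^cu_4)^2=\beta_2$, while $\Delta u_4\ge2\pi\,\nu\otimes dV(z_2)$ because $W$ is plurisubharmonic.
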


The higher-dimensional analogues of the above theorems and corollary follow by adding \(|z_3|^2+\cdots+|z_n|^2\) to the two-dimensional potentials.

We recall several known examples that illustrate typical forms of singular behavior. Błocki \cite{{Blo1}} constructed the solution
$$
v(z)=\frac{n}{n-1}\left(1+|z_1|^2\right)|z'|^{2-\frac{2}{n}},
$$
where \(z'=(z_2,\ldots,z_n)\). This function solves
$
\det(v_{i\bar j})=\left(1+|z_1|^2\right)^{n-2}
$
in both the pluripotential and viscosity senses for \(n\geq2\). Błocki and Dinew \cite{BlockiDinew11} proved that a plurisubharmonic solution of the complex Monge--Ampère equation with a smooth, strictly positive right-hand side is locally smooth if \(u\in W^{2,p}_{\mathrm{loc}}\) for some \(p>n(n-1)\). The example above shows that the threshold \(n(n-1)\) cannot be lowered. 

He \cite{He12} constructed the solution
\(
w(z)=n^{\frac{2}{n}}|z_1|^{\frac{2}{n}}\left(1+|z'|^2\right),
\)
which solves \eqref{CMAdensity} in both the pluripotential and viscosity senses. For \(n\geq3\), this solution is not locally Lipschitz, showing that local boundedness alone does not imply an interior gradient bound. In \cite{WangWang25}, the authors constructed two families of entire toric solutions whose optimal Hölder exponents range over \((0,1)\) and whose \(W^{2,p}_{\mathrm{loc}}\) integrability thresholds range over \((1,2)\). These families rule out a universal positive interior Hölder exponent even under toric symmetry and entireness.

These examples show that a smooth positive right-hand side alone does not guarantee interior regularity. For the Dirichlet problem, regularity can be obtained under suitable assumptions on the domain and the boundary data. We recall some classical results in this direction.

For a bounded strongly pseudoconvex domain \(\Omega\subset\mathbb C^n\), Bedford and Taylor \cite{BT76} established the existence and uniqueness of continuous pluripotential solutions to the Dirichlet problem with a nonnegative density in \(C(\overline\Omega)\) and continuous boundary data. On a smoothly bounded strongly pseudoconvex domain, Caffarelli, Kohn, Nirenberg, and Spruck \cite{CKNS85} proved that the Dirichlet problem admits a unique strictly plurisubharmonic solution in \(C^\infty(\overline\Omega)\) for every smooth boundary datum and every \(f\in C^\infty(\overline\Omega)\) with \(f>0\) on \(\overline\Omega\).

In the pluripotential setting, continuity results for the Dirichlet problem involve assumptions on both the domain and the boundary data. Kołodziej \cite{K98} proved continuity up to the boundary for nonnegative \(L^p\)-densities (\(p>1\)), and continuous boundary data on bounded strictly pseudoconvex domains with smooth boundary. The role of the boundary data is explicit in Błocki’s theorem \cite{Blocki96Hyperconvex}. Let \(\Omega\) be a bounded hyperconvex domain. If \(F\in C(\overline\Omega)\), \(F\geq0\), and \(\varphi\in C(\partial\Omega)\) is the trace of some \(v_0\in PSH(\Omega)\cap C(\overline\Omega)\), then the Dirichlet problem with density \(F\) and boundary value \(\varphi\) admits a unique solution in \(PSH(\Omega)\cap C(\overline\Omega)\).

Guedj, Kołodziej, and Zeriahi \cite{GKZ08} studied Hölder regularity for the Dirichlet problem on a smoothly bounded strongly pseudoconvex domain \(\Omega\subset\mathbb C^n\), with a nonnegative density \(f\in L^p(\Omega)\), \(p>1\), and Hölder continuous boundary data. They proved that the solution is Hölder continuous on \(\overline\Omega\), provided that the solution of the corresponding homogeneous Dirichlet problem with the same boundary data has gradient in \(L^2(\Omega)\).

The solution in \eqref{notdini} and those in the above theorems are constructed using the same method: the \emph{partial Legendre transformation}, which takes the Legendre transform with respect to one variable while keeping the others fixed. In real dimension two, its use to linearize the Monge--Ampère equation with constant right-hand side goes back to Darboux. In complex dimension two, we use this transformation to construct smooth solutions of \eqref{CMAdensity} from harmonic functions in three real variables that are strictly convex in one variable. The resulting solutions are independent of the imaginary part of one complex coordinate.

More precisely, let \(\Omega\subset\mathbb C\times\mathbb R\) be open, with coordinates \((z_1,t)\), and assume that every nonempty fiber
\(
\Omega_{z_1}=\{t\in\mathbb R\mid (z_1,t)\in\Omega\}
\)
is connected. Let \(v\in C^2(\Omega)\) satisfy \(v_{tt}>0\), and define
\[
\Phi_v:\Omega\longrightarrow\mathbb C\times\mathbb R,
\qquad
\Phi_v(z_1,t)=\bigl(z_1,v_t(z_1,t)\bigr).
\]
Then \(\Phi_v\) is a \(C^1\) diffeomorphism onto its open image \(\Phi_v(\Omega)\). Write
\[
\Phi_v^{-1}(z_1,s)
=\bigl(z_1,t_v(z_1,s)\bigr).
\]
Set
\[
F=
\left\{
(z_1,z_2)\in\mathbb C^2
\;\middle|\;
\bigl(z_1,\re z_2\bigr)\in\Phi_v(\Omega)
\right\}.
\]
We define the normalized partial Legendre transform of \(v\) on \(F\) by
\[
u(z_1,z_2)
=4\bigl(s\,t_v(z_1,s)-v(z_1,t_v(z_1,s))\bigr),
\qquad s=\re z_2.
\]

To obtain singular solutions, we combine the above construction with the following criterion for extending solutions across locally complete pluripolar sets. Recall that a subset \(A\) of a complex manifold is called locally complete pluripolar if, near each point of \(A\), it coincides with the \(-\infty\)-locus of a plurisubharmonic function that is not identically \(-\infty\). See Section \ref{pre} for further details.

\begin{theo}\label{main1}
Let \(\Omega\) and \(v\) be as above, and assume in addition that \(v\) is harmonic. Then $u$
is a smooth plurisubharmonic solution of \eqref{CMAdensity} on $F$ that is independent of $\im z_2$.
Set 
\[
E=\operatorname{Int}\lp \ol{F}\rp, \qquad S=E\setminus F,
\]
where \(\overline{F}\) denotes the closure of \(F\) in \(\mathbb C^2\). If $S$ is locally complete pluripolar in
$E$ and $u$ is locally bounded near $S$, then $u$ extends uniquely to a function in \(PSH(E)\cap L^\infty_{\mathrm{loc}}(E)\). 
This extension remains independent of $\im z_2$ and satisfies \eqref{CMAdensity} on $E$ in the Bedford--Taylor sense.
\end{theo}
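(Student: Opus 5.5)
The proof has two parts: a direct computation on \(F\), and an extension argument across \(S\) using Bedford--Taylor theory.

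\textbf{Step 1: \(u\) solves \eqref{CMAdensity} on \(F\).} Write \(z_1=x+iy\) and \(s=\re z_2\). Since \(u\) depends only on \((z_1,s)\), its complex Hessian is built from \(u_{z_1\bar z_1}\), \(u_{z_1 s}\) and \(u_{ss}\). The relevant entries are
\[
u_{2\bar 2}=\tfrac14 u_{ss}, \qquad u_{1\bar 2}=\tfrac12 u_{z_1 s}, \qquad u_{1\bar1}=u_{z_1\bar z_1}.
\]
The standard partial Legendre identities give
\[
u_s=4t, \qquad u_{z_1}=-4v_{z_1}, \qquad u_{ss}=\frac{4}{v_{tt}}.
\]
Differentiating \(u_{z_1}=-4v_{z_1}(z_1,t_v(z_1,s))\) and using \(\partial t_v/\partial z_1=-v_{tz_1}/v_{tt}\), I expect
\[
u_{z_1\bar z_1}=-4\Bigl(v_{z_1\bar z_1}-\frac{|v_{z_1t}|^2}{v_{tt}}\Bigr), \qquad u_{z_1 s}=\frac{4v_{z_1t}}{v_{tt}}.
\]
The \(|v_{z_1t}|^2\) terms cancel in the determinant, leaving
\[
\det(u_{i\bar j})=-\frac{4v_{z_1\bar z_1}}{v_{tt}}.
\]
Harmonicity of \(v\) means \(4v_{z_1\bar z_1}+v_{tt}=0\), so the determinant equals \(1\). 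The constant \(4\) in the normalization is exactly what makes this work; I will check it carefully.

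Plurisubharmonicity then follows because \(u_{2\bar2}>0\) and the determinant is positive, so the Hessian is positive definite. Smoothness follows because harmonic functions are real-analytic and the inverse function theorem applies to \(\Phi_v\).

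\textbf{Step 2: extension across \(S\).} Note that \(F\) is open and \(F\subset E\). Since \(S\) is locally complete pluripolar, it is closed in \(E\) and has empty interior. Hence \(F=E\setminus S\) is dense in \(E\), and \(S\) has Lebesgue measure zero.

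A locally bounded psh function on \(E\setminus S\) extends across a closed pluripolar set. This is the Grauert--Remmert/Siciak-type removability theorem: define \(\tilde u(p)=\limsup_{q\to p,\,q\in F}u(q)\). Uniqueness holds because psh functions are determined off sets of measure zero via their upper regularization. Independence of \(\im z_2\) passes to the limsup, since \(F\) is invariant under translations in \(\im z_2\).

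\textbf{Step 3: the equation on \(E\).} This is the main step. By Bedford--Taylor, \((dd^c\tilde u)^n\) is a positive measure on \(E\) that equals \(\beta_2\) on \(F\). It remains to show that it puts no mass on \(S\). Bedford--Taylor measures of locally bounded psh functions do not charge pluripolar sets.

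To make this concrete, work locally near a point of \(S\). Write \(S=\{\psi=-\infty\}\) with \(\psi\) psh, and replace \(\tilde u\) by \(\max(\tilde u,\, M+\epsilon\psi)\) as needed, or apply the Chern--Levine--Nirenberg / capacity estimate \(\int_{\{\psi<-j\}}(dd^c\tilde u)^2\to 0\). Combined with the absolute continuity of \(\beta_2\), this yields \((dd^c\tilde u)^2=\beta_2\) on all of \(E\).
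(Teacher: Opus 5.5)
Your proposal is correct and follows the paper's route: the same partial Legendre computation giving $\det(u_{i\bar j})=-4v_{z_1\bar z_1}/v_{tt}=1$, then the removable-singularity extension across the closed pluripolar set $S$, together with the facts that the Bedford--Taylor measure of a locally bounded plurisubharmonic function does not charge $S$ and that $\beta_2(S)=0$; the paper packages this as Lemma~\ref{lem:pluripolar-extension}. Two small slips, neither of which affects the argument: in fact $u_{z_1s}=-4v_{z_1t}/v_{tt}$, which is harmless because only $|u_{1\bar2}|^2$ enters the determinant; and $S$ is relatively closed in $E$ because $F$ is open, not because it is locally complete pluripolar, since such sets need not be closed.
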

For example, the solution \eqref{notdini} is obtained by extending the normalized partial Legendre transform of the harmonic function
\[
v(z_1,t)
=-t^2\log|z_1|
+\frac12|z_1|^2\bigl(\log|z_1|-1\bigr)
\]
on \((\mathbb D\setminus\{0\})\times\mathbb R\). Here, \(v_{tt}=-2\log|z_1|>0\) is a constant multiple of the fundamental solution of the planar Laplacian. The logarithmic singularity of \(v_{tt}\) leads to the reciprocal-logarithmic behavior of the solution.

The superposition principle for harmonic functions allows us to construct further solutions of \eqref{CMAdensity}. 

We use weighted sums of logarithmic potentials to construct \(u_1\) and \(u_2\). For \(u_1\), we choose a convergent sequence of poles and include its limit point as an additional pole. Continuity at each isolated pole follows as in the single-pole case, while the additional pole ensures continuity at the limit point. The pole locations and weights are chosen so that disjoint neighborhoods of the poles make nonsummable contributions to the \(L^1\) norm of the real Hessian near the limit point.

For \(u_2\), we choose a countable dense set of poles. The reciprocal of the superposed logarithmic potential vanishes at these poles but is positive almost everywhere, so their density forces the resulting solution to be discontinuous almost everywhere.

To construct \(u_3\), we start with the harmonic dual of Warren’s non-quadratic solution to \eqref{CMAdensity} on \(\mathbb C^2\) \cite{Warren16}. After a simple linear change of variables, this solution takes the form
\begin{equation}\label{War}
U_W(z_1,z_2)=e^{2\operatorname{Re}z_2}+|z_1|^2e^{-2\operatorname{Re}z_2}
\end{equation}
Inverting the normalized partial Legendre transform gives its harmonic dual
\begin{equation*}
h_W(z_1,t)=\frac12\lp t\log\left(t+\sqrt{|z_1|^2+t^2}\right)-\sqrt{|z_1|^2+t^2}\rp.
\end{equation*}

A key property of $h_W$ is that 
$$(h_W)_{tt}=\frac{1}{2\sqrt{|z_1|^2+t^2}},$$ 
a positive multiple of the fundamental solution of \(\Delta\) in \(\mathbb R^3\), which provides the singular building block needed for our construction. We form a harmonic function \(v_3\) by adding to \(h_W\) suitably chosen positive weighted sums of its translates in \(z_1\) and \(t\) and reflections in \(t\). The normalized partial Legendre transform then extends to a continuous solution that fails to belong to \(W^{2,1}_{\mathrm{loc}}\). However, its independence of \(\operatorname{Im}z_2\) prevents it from being proper on \(\mathbb C^2\).

To obtain an exhaustion, we use the locally defined holomorphic map
\begin{equation}\label{pullback}
\Pi(z_1,z_2)=\bigl(z_1z_2,\log z_2\bigr)
\end{equation}
on \(\mathbb C\times\mathbb C^*\), which transforms the Warren solution into the standard solution:
\[
(U_W\circ\Pi)(z_1,z_2)=|z_1|^2+|z_2|^2.
\]
For solutions independent of \(\im z_2\), the pullback is independent of the logarithm branch and invariant under the anti-diagonal \(\mathbb S^1\)-action. 
Moreover, \(\det D_{\mathbb C}\Pi=1\), so the pullback preserves the equation. This leads to the following theorem.

\begin{theo}\label{main:legendre-reductions}
Let \(\Omega\) and \(v\) be as in Theorem \ref{main1}, and set
\[
\hat F=\left\{(z_1,z_2)\in\bC\times\bC^*\,\big|\,\left(z_1z_2,\log |z_2|\right)
\in\Phi_v(\Om)\right\}.
\]
Let \(v^*\) denote the partial Legendre transform of \(v\). Then the function
\[
\hat u(z_1,z_2)=4v^*\left(z_1z_2,\log |z_2|\right)
\]
is a smooth, plurisubharmonic anti-diagonal $\bS^1$-invariant solution of
\eqref{CMAdensity} on $\hat F$. Set
\[
\hat E=\operatorname{Int}\bigl(\overline{\hat F}\bigr),
\qquad \hat S=\hat E\setminus\hat F.
\]
If \(\hat S\) is locally complete pluripolar in \(\hat E\) and \(\hat u\) is locally bounded near \(\hat S\), then \(\hat u\) extends uniquely to a function in \(PSH(\hat E)\cap L^\infty_{\mathrm{loc}}(\hat E)\). The extension remains invariant under the same action and satisfies the equation on \(\hat E\) in the Bedford–Taylor sense.
\end{theo}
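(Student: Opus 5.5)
The plan is to reduce Theorem~\ref{main:legendre-reductions} to Theorem~\ref{main1} through the holomorphic map \(\Pi(z_1,z_2)=(z_1z_2,\log z_2)\), applied on simply connected pieces of \(\bC\times\bC^*\) where a branch of \(\log z_2\) is fixed. Let \(u\) be the normalized partial Legendre transform of \(v\) on \(F\). Since \(u(w_1,w_2)=4v^*(w_1,\re w_2)\) does not depend on \(\im w_2\), we have
\[
\hat u=u\circ\Pi .
\]
This composition is independent of the branch of the logarithm, because different branches change only \(\im\log z_2\). Moreover \(\hat F=\Pi^{-1}(F)\), since \(\re\log z_2=\log|z_2|\).

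The smooth part is immediate. Theorem~\ref{main1} shows that \(u\) is a smooth plurisubharmonic solution on \(F\). Locally \(\Pi\) is holomorphic, and its complex Jacobian is
\[
\begin{pmatrix} z_2 & z_1\\ 0 & 1/z_2\end{pmatrix},
\]
whose determinant is \(1\). Hence \(\hat u\) is smooth and plurisubharmonic, as the pullback of a plurisubharmonic function by a holomorphic map, and
\[
\det(\hat u_{i\bar j})=|\det D_{\bC}\Pi|^2\det(u_{i\bar j})\circ\Pi=1 .
\]
For the \(\bS^1\)-invariance, the action \((e^{i\theta}z_1,e^{-i\theta}z_2)\) fixes \(z_1z_2\) and \(|z_2|\), so \(\hat u\) is invariant directly from its formula.

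For the extension, the most robust approach is to work directly on \(\hat E\) rather than transport \(E\). I would reprove the extension step of Theorem~\ref{main1} in this setting as follows.
\begin{enumerate}
\item Since \(\hat u\) is plurisubharmonic on \(\hat F=\hat E\setminus\hat S\), locally bounded near \(\hat S\), and \(\hat S\) is closed in \(\hat E\) and locally complete pluripolar (hence pluripolar), the classical removable singularity theorem applies. A plurisubharmonic function on the complement of a closed pluripolar set that is locally bounded above extends uniquely to a plurisubharmonic function on the whole set, via the upper semicontinuous regularization of the \(\limsup\). This gives \(\hat u\in PSH(\hat E)\cap L^\infty_{\mathrm{loc}}(\hat E)\).
\item Uniqueness follows because \(\hat S\) has Lebesgue measure zero and plurisubharmonic functions are determined almost everywhere.
\item Invariance of the extension follows from this uniqueness. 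The action preserves \(\hat F\), hence \(\hat E\) and \(\hat S\), and composing the extension with the action yields another plurisubharmonic extension.
\item For the equation, \((dd^c\hat u)^n\) is defined in the Bedford--Taylor sense for locally bounded plurisubharmonic functions. On \(\hat F\) it equals \(\beta_n\) by the smooth computation above.
\item Bedford--Taylor measures of locally bounded plurisubharmonic functions put no mass on pluripolar sets. Hence \((dd^c\hat u)^n\) and \(\beta_n\) agree on \(\hat E\).
\end{enumerate}

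The main obstacle I anticipate is the definition of "locally bounded near \(\hat S\)": the removable-singularity step needs \(\hat u\) to be bounded above near every point of \(\hat S\) inside \(\hat E\). I would take the hypothesis to mean local boundedness on \(\hat F\cap K\) for compacts \(K\subset\hat E\). I would also check the no-mass statement through the standard fact that Bedford--Taylor measures are dominated by capacity, which vanishes on pluripolar sets. If the proof of Theorem~\ref{main1} already isolates these two facts as a lemma for general closed locally complete pluripolar sets, I would cite that lemma verbatim, with \(\hat E,\hat S,\hat u\) in place of \(E,S,u\).
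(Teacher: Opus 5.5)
Your proposal is correct and follows the paper's proof essentially step for step. The paper likewise writes $\hat u=u\circ\Pi$ on local branches, uses $\det D_{\bC}\Pi=1$ and the invariance of $z_1z_2$ and $\log|z_2|$, invokes Lemma~\ref{lem:pluripolar-extension} (which packages exactly your steps 1, 2, 4 and 5), and obtains invariance of the extension from uniqueness, since the action preserves $\hat F$ and hence $\hat E$.
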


To construct \(u_4\), we first take \(g=g(z_1)\) to be the logarithmic potential of a Cantor measure. This gives a continuous subharmonic function whose distributional Laplacian is a singular measure. We then use the partial Legendre transformation to construct a plurisubharmonic function \(W\) such that \(u_4=W+g\) has complex Monge–Ampère measure \(\beta_2\). Since \(\Delta u_4\geq\Delta g\), the distributional Laplacian of \(u_4\) has a singular part.

In Section~\ref{sec:further-examples}, we construct further solutions using partial Legendre transforms and volume-preserving holomorphic pullbacks. Starting from Warren’s harmonic dual, we interpolate between it and its reflection in \(t\) to obtain a family of entire solutions with Hölder exponents ranging over \((0,1]\). We also explicitly compute the transform of the sum of two translates. By adding a harmonic quadratic polynomial to the dual and applying the transform followed by the holomorphic pullback $\Pi$, we recover LeBrun’s Taub–NUT potential. We also derive an explicit solution by applying the partial Legendre transform to a cubic harmonic polynomial. Its real counterpart provides an explicit counterexample to global \(C^2\) regularity for \(C^{2,1}\) boundary data on a disk. Finally, using holomorphic pullbacks, we construct a family of continuous entire toric solutions parametrized by invertible row-stochastic matrices, unifying and extending the families obtained in our earlier work \cite{WangWang25}.

The partial Legendre transform has been used to obtain a wide range
of results. For instance, see \cite{GP} for the preservation of
ellipticity; \cite{Gua,RSW1,SW1,RSW2,DS,Liu} for applications to
regularity theory for Monge--Amp\`ere equations; \cite{Wa,WZ1}
for estimates for linearized Monge--Amp\`ere equations and
fourth-order equations; \cite{Ki} for Kiselman's minimum principle
in pluripotential theory; and \cite{Donaldson10,He12Donaldson}
for harmonic formulations of Donaldson's equation using the
transform and its variants.

For other constructions of explicit solutions to the complex Monge–Ampère equation and related Hessian equations using different ansatzes, see Pan and Zhang \cite{PanZhang26} and Tsai, Tsui, and Wang \cite{TTW}.

\smallskip
\noindent{\it Organization.}
Section~\ref{pre} collects preliminaries used in our constructions,
including Bedford--Taylor and viscosity solutions, extension across
pluripolar sets, and Cantor measures.
Section~\ref{partial Legendre} develops the partial Legendre transform
for solutions independent of $\operatorname{Im}z_2$ and for solutions
invariant under the anti-diagonal $\bS^1$-action.
Section~\ref{sec:logarithmic-poles} uses infinite superpositions of
logarithmic poles to construct a continuous solution outside
$W^{2,1}_{\mathrm{loc}}$ and a bounded solution that is discontinuous
almost everywhere and nowhere classically differentiable.
Section~\ref{sec:proper-w21} constructs a continuous exhaustion
of $\bC^2$ that is not Dini continuous and belongs neither to
$W^{2,1}_{\mathrm{loc}}$ nor to $W^{1,p}_{\mathrm{loc}}$
for any $p>2$.
Section~\ref{sec:cantor-laplacian} constructs a continuous solution
whose distributional Laplacian has a singular part.
Finally, Section~\ref{sec:further-examples} presents further explicit
solutions obtained through partial Legendre transforms and
volume-preserving holomorphic pullbacks.

\subsection*{Acknowledgement}
The authors thank Bin Zhou for asking whether bounded solutions of \(\det(\partial\bar\partial u)=1\) are necessarily continuous, and Jiakun Liu for asking how properness affects the regularity of solutions. These questions helped motivate the present work. The authors thank Yu Yuan for introducing them to the method of constructing solutions via volume-preserving holomorphic pullbacks. This led them to observe that Warren’s solution pulls back to the standard quadratic potential under the map \(\Pi\) defined in \eqref{pullback}. This observation further inspired them to combine the pullback with superpositions of translates of Warren’s harmonic dual, leading to the construction of the exhaustion solution outside \(W^{2,1}_{\mathrm{loc}}\). The authors also thank their colleague Xiao Yao for a conversation that inspired them to consider singular measures supported on self-similar sets arising in complex dynamics, although they ultimately chose the simpler and more familiar Cantor set. Finally, they thank Xu-Jia Wang for his encouragement.

\subsection*{Development of the results and use of AI}
The authors began this project in 2024, building on their work in the first paper in this series \cite{WangWang25}. In that paper, they had already used the partial Legendre transform to construct the solution \eqref{notdini}, although this approach was not explicitly described there. The authors subsequently extended the construction of \eqref{notdini} to infinitely many poles, obtaining the solutions \(u_1\) and \(u_2\). The initial ideas for \(u_3\) and \(u_4\) were inspired by the authors’ conversations with other mathematicians, as noted in the acknowledgements above. The authors subsequently developed these ideas into concrete construction strategies.

The mathematical ideas and constructions were developed by the authors. AI tools were used for language editing and assistance with literature searches. The authors take full responsibility for the content of the manuscript.

\section{Preliminaries}\label{pre}

We first recall the Bedford--Taylor construction \cite{BT76}.
On $\bC^n$, we use the convention $d=\partial+\bar\partial$
and $d^c=\sq(\bar\partial-\partial)$, so that
$dd^c=2\sq\partial\bar\partial$.
For a domain $\Om\subset\bC^n$, denote by $PSH(\Om)$ the set of
plurisubharmonic functions on $\Om$.
For $u\in PSH(\Om)$, the current $dd^cu$ is closed and positive.
If $u$ is also locally bounded, its complex Monge--Amp\`ere
measure is defined inductively by
$$
\lp dd^cu\rp^k
:=dd^c\big(u\lp dd^cu\rp^{k-1}\big),
\qquad k=1,\ldots,n,
$$
with $\lp dd^cu\rp^0=1$.

If $u\in C^2(\Om)$, then
$$
dd^cu=2\sq\sum_{i,j=1}^n
u_{i\bar j}\,dz_i\wedge d\bar z_j,
$$
and hence
$$
\lp dd^cu\rp^n=\det(u_{i\bar j})\,\beta_n,
$$
where $\beta_n=\lp dd^c|z|^2\rp^n$, as in the introduction.

In this paper, we will use the following convergence theorem to verify the
pluripotential solutions below.

\begin{lem}[Convergence theorem \cite{BT82}]\label{convergence-theorem}
Let $u_j\in PSH(\Om)\cap
L^{\infty}_{\mathrm{loc}}(\Om)$ decrease to
$u\in PSH(\Om)\cap L^{\infty}_{\mathrm{loc}}(\Om)$.
Then $\lp dd^c u_j\rp^n$ converges weakly to $\lp dd^c u\rp^n$ on $\Om$.
\end{lem}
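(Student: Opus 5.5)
We will prove Lemma~\ref{convergence-theorem} by induction on the exponent $k$ in $T_j^{(k)}:=(dd^cu_j)^k$. The statement to be shown is that $(dd^cu_j)^k\to(dd^cu)^k$ weakly as currents for $k=0,\dots,n$, and more generally that $u_j\,(dd^cu_j)^{k}\to u\,(dd^cu)^{k}$ weakly. The case $k=n$ is the lemma. The question is local, so we fix a ball $B\Subset\Om$. After subtracting a constant, we may assume $-M\le u\le u_j\le 0$ on a neighborhood of $\overline B$; this uses that $u_j$ decreases and that $u$ is locally bounded. By the Chern--Levine--Nirenberg inequality, the masses of $(dd^cu_j)^k$ on compact subsets of $B$ are uniformly bounded, so weak compactness is available.

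The case $k=0$ is monotone convergence: $u_j\to u$ in $L^1_{\mathrm{loc}}$. For the inductive step, suppose $(dd^cu_j)^{k}\to(dd^cu)^{k}$ weakly. Since $(dd^cu_j)^{k+1}=dd^c\bigl(u_j(dd^cu_j)^k\bigr)$ and $dd^c$ is weakly continuous, it suffices to show $u_j(dd^cu_j)^k\to u(dd^cu)^k$ weakly.

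This step is the main obstacle. The difficulty is that $u_j$ converges only pointwise, and not uniformly, while the measures $(dd^cu_j)^k\wedge\chi$, for a test form $\chi$, also vary with $j$. The plan has three parts.

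First, any weak limit $\Theta$ of $u_j(dd^cu_j)^k$ satisfies $\Theta\le u(dd^cu)^k$ when tested against positive forms. Indeed, for fixed $l\le j$ we have $u_j\le u_l$ and can write $u_l=\lim_m \phi_m$ with $\phi_m$ continuous and decreasing. Then $\int u_j(dd^cu_j)^k\wedge\chi\le\int\phi_m(dd^cu_j)^k\wedge\chi$. Letting $j\to\infty$ with $\phi_m$ continuous gives $\int\phi_m(dd^cu)^k\wedge\chi$, and letting $m\to\infty$ and then $l\to\infty$ gives the bound by monotone convergence.

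Second, the reverse inequality requires quasicontinuity. We use the Bedford--Taylor capacity $\mathrm{cap}(K,B)=\sup\{\int_K(dd^cw)^n: w\in PSH(B),\,-1\le w\le0\}$. Every bounded psh function is continuous off an open set of arbitrarily small capacity, which is proved by approximating by decreasing smooth convolutions and using the CLN-type estimate $\mathrm{cap}(\{u_\varepsilon-u>\delta\})\lesssim\delta^{-1}\int(u_\varepsilon-u)(dd^c\cdot)^{n}$. Moreover, $(dd^cu_j)^k\wedge(dd^c|z|^2)^{n-k}$ is dominated by a constant times the capacity uniformly in $j$, because $0\ge u_j\ge -M$.

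Third, we use these facts to compare. Given $\varepsilon>0$, choose an open set $G$ with $\mathrm{cap}(G)<\varepsilon$ such that $u$ and all $u_j$ are continuous on $B\setminus G$; countably many functions can be handled together by taking unions. On the compact set $\overline B\setminus G$, Dini's theorem gives $u_j\to u$ uniformly. We write $u=\tilde u$, where $\tilde u$ is a continuous extension of $u|_{B\setminus G}$ bounded by $M$. Then
\[
\Bigl|\int u_j(dd^cu_j)^k\wedge\chi-\int u(dd^cu)^k\wedge\chi\Bigr|\le \sup_{B\setminus G}|u_j-u|\,C+\Bigl|\int\tilde u\bigl((dd^cu_j)^k-(dd^cu)^k\bigr)\wedge\chi\Bigr|+CM\varepsilon .
\]
The middle term tends to $0$ by the induction hypothesis, since $\tilde u$ is continuous. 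Letting $j\to\infty$ and then $\varepsilon\to0$ closes the induction, and taking $k=n$ gives $(dd^cu_j)^n\to(dd^cu)^n$ weakly on $\Om$.
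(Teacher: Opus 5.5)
The paper gives no proof of this lemma; it quotes it from Bedford--Taylor, so there is no internal argument to compare with. Your overall structure is a standard modern proof, and the three-term estimate in your third step is correct. That structure is: induction on $k$, uniform mass bounds from Chern--Levine--Nirenberg, quasicontinuity, Dini's theorem on $\overline B\setminus G$, a Tietze extension $\tilde u$, and domination of $(dd^cu_j)^k\wedge\omega^{n-k}$ by capacity.

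Your third step bounds $\bigl|\int u_j(dd^cu_j)^k\wedge\chi-\int u(dd^cu)^k\wedge\chi\bigr|$ directly for every test form $\chi$. It therefore gives both inequalities at once, so your first step is correct but unnecessary. Two points deserve a line each:
\begin{itemize}
\item Applying the induction hypothesis to the merely continuous form $\tilde u\chi$ uses the uniform CLN mass bounds together with uniform approximation by smooth forms.
\item The capacity domination comes from the polarization inequality $dd^cw_1\wedge\cdots\wedge dd^cw_n\le\frac{n^n}{n!}\bigl(dd^c\tfrac1n\sum_iw_i\bigr)^n$, applied with $w_i\in\{u_j/M,\ |z|^2\}$, each suitably normalized to take values in $[-1,0]$.
\end{itemize}

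The genuine gap is quasicontinuity. It carries essentially all the difficulty of the lemma, and your justification does not prove it. The bound $\mathrm{cap}(\{u_\eps-u>\delta\}\cap K)\le\delta^{-1}\sup_v\int_K(u_\eps-u)(dd^cv)^n$ is only Chebyshev's inequality. The real content is that this supremum, taken over all $v\in PSH(B)$ with $-1\le v\le 0$, tends to $0$ as $\eps\to0$. For a fixed $v$ this follows from dominated convergence. However, CLN controls only the masses of the measures $(dd^cv)^n$, not uniformity of $\int(u_\eps-u)\,d\mu$ over this family.

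One way to close the gap without using the lemma being proved:
\begin{itemize}
\item Normalize $-1\le u\le 0$ and set $w=u_\eps-u\in[0,1]$.
\item Let $T$ be a wedge product of $n-1$ factors chosen from $dd^cu$, $dd^cv$, $\omega$.
\item Take cutoffs $\chi\le\chi'$ with $\chi'=1$ on $\operatorname{supp}\chi$.
\end{itemize}
One integration by parts and Cauchy--Schwarz give
\[
\int\chi w\,dd^cv\wedge T\le C\Bigl(\int\chi' w\,\omega\wedge T+\int\chi w\,dd^cu\wedge T\Bigr)^{1/2},
\]
with $C$ uniform in $v$ by CLN. The only $\eps$-dependent curvature term, $-\int\chi w\,dd^cu_\eps\wedge T$, is nonpositive and can be discarded.

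Iterating removes every factor $dd^cv$ after at most $n$ steps. What remains are the integrals $\int\chi'' w\,(dd^cu)^a\wedge\omega^{n-a}$ against fixed measures, which tend to $0$ by dominated convergence because $u_\eps\downarrow u$ everywhere. This argument uses Bedford--Taylor integration by parts and Cauchy--Schwarz for bounded plurisubharmonic functions.

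If you would rather cite quasicontinuity, check that the proof you cite does not itself rely on the monotone convergence theorem. Otherwise your argument becomes circular.
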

The same conclusion holds if \(u_j\to u\) locally uniformly on \(\Omega\).

As mentioned in the Introduction, we will extend solutions across
certain singular sets. We first recall the relevant notions of
\emph{pluripolarity}.

\begin{defi}
Let $\Om\subset\bC^n$ be open and let $S\subset\Om$.
The set $S$ is locally pluripolar in $\Om$ if, for every $z\in S$,
there exist a connected open neighborhood $U\subset\Om$ of $z$
and a function $u\in PSH(U)$, not identically $-\infty$, such that
$$
S\cap U\subset\{w\in U\,|\,u(w)=-\infty\}.
$$
If, for every $z\in S$, the neighborhood $U$ and the function $u$
can be chosen so that equality holds, then $S$ is called
locally complete pluripolar in $\Om$.
\end{defi}

Every locally pluripolar subset of an open set in $\bC^n$ has
Lebesgue measure zero \cite{BT82}.
A complex-analytic subset of a complex manifold is locally
complete pluripolar if it contains no connected component
of the manifold.

We will use the following extension lemma for locally bounded
plurisubharmonic functions.

\begin{lem}\label{lem:pluripolar-extension}
Let $\cO\subset\bC^n$ be open, and let $A\subset\cO$ be a closed
locally complete pluripolar set. Suppose that
\(
v\in PSH(\cO\setminus A)
\cap L^\infty_{\mathrm{loc}}(\cO\setminus A)
\)
is locally bounded near $A$.
Then $v$ has a unique plurisubharmonic extension
$$
\tilde v\in PSH(\cO)\cap L^\infty_{\mathrm{loc}}(\cO),
$$
given by
\begin{equation}\label{eq:canonical-psh-extension}
\tilde v=v\quad\text{on }\cO\setminus A,
\qquad
\tilde v(\zeta)
=\limsup_{\cO\setminus A\ni z\to\zeta}v(z)
\quad\text{for }\zeta\in A.
\end{equation}
Any continuous extension of $v$ agrees with $\tilde v$.
Moreover, if $\lp dd^cv\rp^n=f\,\beta_n$ on $\cO\setminus A$
in the Bedford--Taylor sense for some
$f\in L^1_{\mathrm{loc}}(\cO)$, then
$\lp dd^c\tilde v\rp^n=f\,\beta_n$ on $\cO$
in the same sense.
\end{lem}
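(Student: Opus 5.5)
The plan is to invoke the classical extension theorem for plurisubharmonic functions across closed pluripolar sets, check uniqueness by hand, and then deduce the Monge--Amp\`ere statement from the fact that Bedford--Taylor measures of bounded functions do not charge pluripolar sets.

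\emph{Extension.} The question is local, so fix $\zeta\in A$. Choose a small ball $B\ni\zeta$ with $v$ bounded above on $B\setminus A$, and with $A\cap B=\{\psi=-\infty\}$ for some $\psi\in PSH(B)$, $\psi\not\equiv-\infty$. Since $A$ is a closed pluripolar set, the Grauert--Remmert/Riemann-type extension theorem applies to plurisubharmonic functions bounded above near $A$. It gives that the upper semicontinuous regularization \eqref{eq:canonical-psh-extension} is plurisubharmonic on $\cO$. A direct argument also works: for $\varepsilon>0$ the function $v_\varepsilon=v+\varepsilon\psi$ on $B\setminus A$, extended by $-\infty$ on $A$, is plurisubharmonic on $B$. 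Indeed, it is u.s.c.\ and satisfies the local sub-mean value property, since near $A$ it tends to $-\infty$. Letting $\varepsilon\to0$, the functions $v_\varepsilon$ increase to $v$ off $A$, and $A$ has Lebesgue measure zero. Hence $(\sup_\varepsilon v_\varepsilon)^*$ is plurisubharmonic, and it equals $\tilde v$ given by the limsup formula.

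\emph{Uniqueness and boundedness.} Local boundedness of $\tilde v$ follows from that of $v$ near $A$. Now let $w\in PSH(\cO)$ agree with $v$ off $A$. A plurisubharmonic function satisfies $w(\zeta)=\operatorname{ess\,limsup}_{z\to\zeta}w(z)$, and $A$ is Lebesgue-null, so $w(\zeta)$ depends only on $w$ off $A$. Hence $w=\tilde v$. A continuous extension $w$ of $v$ has $w(\zeta)=\lim_{z\to\zeta,\,z\notin A}v(z)$, which equals the limsup in \eqref{eq:canonical-psh-extension}. Here we use that $A$ has empty interior, so $\zeta$ is a limit of points outside $A$.

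\emph{The equation.} Since $\tilde v$ is locally bounded, $\mu:=(dd^c\tilde v)^n$ is a well-defined positive Radon measure on $\cO$. Its restriction to $\cO\setminus A$ is $f\beta_n$, because the Bedford--Taylor operator is local in the plurifine (in particular the Euclidean) topology. The key fact is that, by Bedford--Taylor, the Monge--Amp\`ere measure of a locally bounded plurisubharmonic function puts no mass on pluripolar sets. This is seen via the capacity estimate $\mu(K)\le C\,\mathrm{Cap}(K,B)$ for $K\subset B'\Subset B$, together with $\mathrm{Cap}=0$ on pluripolar sets. Therefore $\mu(A)=0$, so $\mu=f\beta_n$ on $\cO$, using $f\in L^1_{\mathrm{loc}}$ and $\beta_n(A)=0$.

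The main obstacle is justifying that $\mu$ does not charge $A$. I would cite Bedford--Taylor's theorem that $(dd^c u)^n$ vanishes on pluripolar sets for $u\in PSH\cap L^\infty_{\mathrm{loc}}$ rather than reprove it.
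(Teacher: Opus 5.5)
Your proposal is correct and follows the same route as the paper. The removable-singularity theorem for plurisubharmonic functions across closed pluripolar sets gives the extension by the limsup formula, its uniqueness and its local boundedness. The equation then follows from locality of the Bedford--Taylor operator, the fact that Monge--Amp\`ere measures of locally bounded plurisubharmonic functions do not charge pluripolar sets, and the fact that $f\beta_n$ does not charge the Lebesgue-null set $A$.

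The only difference is that you sketch elementary justifications for steps the paper simply cites from Demailly and Bedford--Taylor: the $v+\varepsilon\psi$ regularization and the essential-limsup characterization. For the monotonicity in the $v+\varepsilon\psi$ regularization, first normalize $\psi\le 0$ on $B$.
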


\begin{proof}
The removable-singularity theorem for plurisubharmonic functions
\cite[Chapter~I, Theorem~5.24]{DemaillyCADG} gives the unique
plurisubharmonic extension defined by
\eqref{eq:canonical-psh-extension}.
This formula and the local boundedness of $v$ near $A$ imply
that $\tilde v$ is locally bounded on $\cO$.
The same formula shows that any continuous extension of $v$
agrees with $\tilde v$.

Since $\tilde v$ is locally bounded, its Bedford--Taylor
measure does not charge the locally pluripolar set $A$
\cite[Theorem~6.9]{BT82}.
The measure $f\,\beta_n$ also vanishes on $A$, since $A$ has
Lebesgue measure zero.
By locality, the two measures agree on $\cO\setminus A$.
They therefore agree on all of $\cO$.
\end{proof}

Next, we recall the notion of viscosity solutions.

\begin{defi}[\cite{CIL,EGZ}]\label{vissol}
Let $\Om\subset\bC^n$ be a domain, let $f\in C(\Om)$ be
nonnegative, and let $\cH_n$ be the space of $n\times n$ Hermitian
matrices. Define $F:\Om\times\cH_n\to\bR\cup\{+\infty\}$ by
$$
F(x,A) = \begin{cases}
f(x)-\det(A), & \text{if } A\geq0, \\
+\infty, & \text{otherwise}.
\end{cases}
$$
An upper semicontinuous function $u:\Om\to\bR$ is a viscosity
subsolution of $\det(\partial\bar\partial u)=f$ if, whenever
$p\in\Om$ and $\varphi\in C^2$ near $p$ are such that
$u-\varphi$ has a local maximum at $p$, we have
$$
F\bigl(p,\partial\bar\partial\varphi(p)\bigr)\leq0.
$$
Similarly, a lower semicontinuous function $u:\Om\to\bR$ is a viscosity
supersolution if the reverse inequality holds whenever $u-\varphi$ has a local minimum. A function $u$ is a viscosity solution if it is both a viscosity subsolution and a viscosity supersolution.
\end{defi}

Eyssidieux, Guedj, and Zeriahi \cite{EGZ} proved the equivalence between viscosity and pluripotential subsolutions of \(\det(\partial\bar\partial u)=f\). We will use the fact that every continuous solution in Bedford–Taylor sense of \eqref{CMAdensity} is also a viscosity solution. Although this fact is well known, we include a proof for the reader’s convenience.

\begin{prop}\label{prop:bt-viscosity}
Let $\Om\subset\bC^n$ be a domain and let
$u\in PSH(\Om)\cap C(\Om)$. Suppose that $u$ satisfies
$$\lp dd^cu\rp^n=\beta_n$$
in the Bedford--Taylor sense. Then \(u\) is a viscosity solution of the same equation.
\end{prop}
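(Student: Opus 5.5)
We prove the subsolution and supersolution properties separately, using the comparison principle of Bedford--Taylor on small balls and the classical fact that $\det(\cdot)^{1/n}$ is concave on nonnegative Hermitian matrices. Throughout we fix $p\in\Om$ and a $C^2$ function $\varphi$ near $p$. After replacing $\varphi$ by $\varphi\pm\eps|z-p|^4$, we may assume that the extremum of $u-\varphi$ at $p$ is strict. We may also normalize $u(p)=\varphi(p)$.

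\emph{Subsolution.} Suppose $u-\varphi$ has a local maximum at $p$. We must show $\partial\bar\partial\varphi(p)\ge0$ and $\det\partial\bar\partial\varphi(p)\ge1$. Since $u$ is plurisubharmonic and $\varphi\ge u$ near $p$ with equality at $p$, restricting to complex lines through $p$ gives submean-value inequalities for $\varphi$ on small circles. Hence $\varphi$ has nonnegative Laplacian along every complex line at $p$, i.e.\ $\partial\bar\partial\varphi(p)\ge0$.

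Now suppose, for contradiction, that $\det\partial\bar\partial\varphi(p)<1$. We build a strict smooth supersolution touching from above. Set
\[
\psi=\varphi+\delta|z-p|^2-\eta,
\]
where $\delta>0$ is small enough that, on a small ball $B=B(p,r)$, either $\psi$ fails to be plurisubharmonic somewhere or $\det\partial\bar\partial\psi<1$. The delicate case is $\partial\bar\partial\varphi(p)$ degenerate. There we instead perturb $\varphi$ by adding a small multiple of a quadratic, chosen so that the Hessian becomes positive definite with determinant still less than $1$. This is possible because $\det<1$ is an open condition.

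So we may assume $\partial\bar\partial\psi>0$ and $\det\partial\bar\partial\psi<1$ on $B$. Then $(dd^c\psi)^n\le(dd^cu)^n$ on $B$. Strictness of the maximum gives $\psi\ge u$ on $\partial B$ for $\eta$ small, while $\psi(p)<u(p)$. The Bedford--Taylor comparison principle, in the form ``$(dd^c u)^n\ge(dd^c\psi)^n$ on $B$ and $u\le\psi$ on $\partial B$ imply $u\le\psi$ in $B$'', then yields a contradiction. Continuity of $u$ is used here so that the boundary inequality holds in the required $\liminf$ sense.

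\emph{Supersolution.} Suppose $u-\varphi$ has a local minimum at $p$. If $\partial\bar\partial\varphi(p)$ is not nonnegative, then $F=+\infty\ge0$ trivially. Otherwise assume, for contradiction, that $\det\partial\bar\partial\varphi(p)>1$. Then near $p$ the function $\psi=\varphi-\delta|z-p|^2+\eta$ is smooth and strictly plurisubharmonic with $\det\partial\bar\partial\psi>1$ on a small ball $B$. Hence $(dd^c\psi)^n\ge(dd^cu)^n$ there. Strictness of the minimum gives $\psi\le u$ on $\partial B$, while $\psi(p)>u(p)$. Comparison then gives $\psi\le u$ on $B$, a contradiction.

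The main obstacle is the degenerate-Hessian case in the subsolution step, together with correctly invoking the comparison principle for merely continuous $u$ against a smooth $\psi$. Both are handled by the quadratic perturbation and the continuity of $u$ on $\overline B$.
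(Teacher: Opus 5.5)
Your proof is correct. The supersolution half is the paper's argument: subtract $\delta|z-p|^2$, add a small constant (the paper takes $\varepsilon R^2/2$), and contradict the Bedford--Taylor comparison principle at $p$.

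The difference is in the subsolution half. The paper does not prove it but cites \cite[Proposition~1.5]{EGZ}. You prove it directly. First you get $\partial\bar\partial\varphi(p)\ge0$ from the sub-mean-value inequality for $u$ on complex lines through $p$. Then you run the mirror-image comparison argument with $\psi=\varphi+\delta|z-p|^2-\eta$ and $0<\eta<\delta r^2$. Your route makes the proposition self-contained and shows that both halves follow from the comparison principle alone. The citation is shorter and rests on the more general Eyssidieux--Guedj--Zeriahi equivalence between pluripotential and viscosity subsolutions.

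Your write-up can be streamlined in a few places:
\begin{itemize}
\item The concavity of $\det^{1/n}$ is never used.
\item The $\pm\varepsilon|z-p|^4$ perturbation is redundant, because the $\pm\delta|z-p|^2$ term already gives a strict inequality on $\partial B$.
\item There is no delicate degenerate case. Once $\partial\bar\partial\varphi(p)\ge0$ is known, adding $\delta$ times the identity makes $\partial\bar\partial\psi(p)$ positive definite with determinant still less than $1$ for small $\delta$. Both properties persist on a small ball by continuity. Replace the sentence about $\psi$ ``failing to be plurisubharmonic somewhere'' with this observation.
\item In the subsolution half, only upper semicontinuity of $u$ is needed for the boundary condition.
\end{itemize}
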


\begin{proof}
By \cite[Proposition 1.5]{EGZ}, \(u\) is a viscosity subsolution. It therefore suffices to prove that \(u\) is a viscosity supersolution. Let \(p\in\Omega\) and let \(\varphi\) be a \(C^2\) function touching \(u\) from below at \(p\). Assume that \((\varphi_{i\bar j}(p))\ge0\). We need to show that
\[
\det(\varphi_{i\bar j}(p))\le1.
\]
Suppose, to the contrary, that \(\det(\varphi_{i\bar j}(p))>1\). Then \((\varphi_{i\bar j}(p))>0\). By continuity, for sufficiently small \(\varepsilon,R>0\), the function \(\varphi-\varepsilon|z-p|^2\) is strictly plurisubharmonic with complex Hessian determinant greater than \(1\) on \(B_R(p)\). We may also assume that \(\overline{B_R(p)}\subset\Omega\) and \(\varphi\le u\) there. Set
\[
\psi(z)=\varphi(z)-\varepsilon|z-p|^2+\frac{\varepsilon R^2}{2}.
\]
Then \(\psi<u\) on \(\partial B_R(p)\), while \(\psi(p)>u(p)\). Since
\[
(dd^c\psi)^n>\beta_n=(dd^cu)^n,
\]
the Bedford–Taylor comparison principle \cite{BT82} gives \(\psi\le u\) on \(B_R(p)\), a contradiction. Thus $u$ is also a viscosity supersolution.
\end{proof}

We next recall two facts that will be used to rule out
\(W^{2,1}_{\mathrm{loc}}\) regularity for certain singular solutions.
For completeness, we include their proofs.

\begin{prop}\label{w2,1continuous}
Let \(G\subset\mathbb C\) be a domain. Every subharmonic function in \(W^{2,1}_{\mathrm{loc}}(G)\) is continuous.
\end{prop}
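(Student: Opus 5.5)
The plan is to use two facts: a Sobolev function in $W^{2,1}$ in real dimension two has a continuous representative, and a subharmonic function is determined everywhere by its values almost everywhere. Together these show that a subharmonic $u\in W^{2,1}_{\mathrm{loc}}(G)$ is continuous.

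\emph{Step 1 (continuous representative).} In real dimension two, $W^{2,1}_{\mathrm{loc}}$ embeds into $C^0$. One way to see this is to fix a disk $D\Subset G$ and a cutoff $\chi\in C_c^\infty(G)$ with $\chi\equiv1$ near $\overline D$. Set $w=\chi u\in W^{2,1}(\mathbb R^2)$ with compact support, so that
\[
w=\frac{1}{2\pi}\log|\cdot|*\Delta w,
\]
with $\Delta w\in L^1$. For a continuity statement, however, the log kernel convolved with an $L^1$ function is only in BMO, so a better route is needed. I will instead use the classical fact that $W^{1,1}$ functions in one variable are absolutely continuous, applied via the representation
\[
w(x)=\int_{-\infty}^{x_1}\int_{-\infty}^{x_2}\partial_1\partial_2 w\,dy_2\,dy_1 .
\]
This holds for smooth compactly supported $w$. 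It gives $\|w\|_{L^\infty}\le\|\partial_1\partial_2 w\|_{L^1}$, and in fact shows that $w$ is a uniform limit of the mollifications $w_\varepsilon$. Indeed,
\[
\|w_\varepsilon-w_\delta\|_{\infty}\le\|\partial_{12}w_\varepsilon-\partial_{12}w_\delta\|_{L^1}\to0 .
\]
Hence $w$ agrees almost everywhere with a continuous function $\tilde w$, and therefore $u$ agrees almost everywhere on $D$ with a continuous function $\tilde u$.

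\emph{Step 2 (identifying $u$ with $\tilde u$ everywhere).} Since $u$ is subharmonic, it is given pointwise by the limit of its disk averages:
\[
u(z)=\lim_{r\to0}\frac{1}{\pi r^2}\int_{D_r(z)}u\,dA .
\]
This is the standard consequence of the sub-mean value property together with upper semicontinuity. The averages depend only on the almost-everywhere class of $u$, so they coincide with the averages of $\tilde u$. Because $\tilde u$ is continuous, these averages converge to $\tilde u(z)$. Thus $u=\tilde u$ everywhere on $D$, so $u$ is continuous on $D$, and since $D$ was arbitrary, $u$ is continuous on $G$.

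The main obstacle is Step 1: the borderline embedding $W^{2,1}\subset C^0$ in dimension two. The naive approaches via the log kernel or via $W^{1,2}$ fail, so the argument must use the mixed derivative $\partial_1\partial_2$ and the $L^1$ bound above. The representation formula is checked for $C_c^\infty$ functions and then extended by density, which is routine.
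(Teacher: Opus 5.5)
Your proposal is correct and follows the same two-step argument as the paper: obtain a continuous representative from the endpoint Sobolev embedding $W^{2,1}\subset C^0$ in two real dimensions, then use the fact that a subharmonic function is the pointwise limit of its disk averages to conclude that it equals that representative everywhere. The only difference is that you prove the embedding directly, via the representation $w(x)=\int_{-\infty}^{x_1}\int_{-\infty}^{x_2}\partial_1\partial_2 w$ and the resulting uniform Cauchy property of the mollifications, whereas the paper simply cites the embedding as standard.
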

\begin{proof} 
By the standard endpoint Sobolev embedding in two real dimensions, such a function \(f\) admits a continuous representative \(v\). Since \(f\) is subharmonic, its values are recovered by limits of averages over shrinking balls. These averages coincide with those of \(v\), so the continuity of \(v\) gives \(f=v\) everywhere.
\end{proof}

\begin{prop}\label{prop:real-hessian-bound}
Let \(G\subset\mathbb C\) be a domain and let \(f\in W^{2,1}_{\mathrm{loc}}(G)\), identified with its continuous representative. For every disk \(B_R(p)\Subset G\), we have
\[
\int_{B_R(p)}|D^2f|\,dV
\ge
\int_0^{2\pi}f(p+Re^{\mathrm i\theta})\,d\theta-2\pi f(p),
\]
where \(D^2f\) is the real Hessian and \(|D^2f|\) is its Euclidean matrix norm. In particular, if \(f\) is invariant under rotations about \(p\), then
\[
\int_{B_R(p)}|D^2f|\,dV
\ge 2\pi\bigl(f(p+R)-f(p)\bigr).
\]
\end{prop}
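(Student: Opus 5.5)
We prove the inequality by integrating a radial second derivative along rays from $p$. The argument runs first for smooth functions and then passes to $f$ by density. Assume first that $f\in C^2(\overline{B_{R'}(p)})$ for some $R'>R$. For each $\theta$, set $\gamma_\theta(r)=f(p+re^{\mathrm i\theta})$ for $0\le r\le R$, and let $e_\theta=(\cos\theta,\sin\theta)$. Then $\gamma_\theta''(r)=\langle D^2f(p+re^{\mathrm i\theta})e_\theta,e_\theta\rangle$, so $|\gamma_\theta''(r)|\le|D^2f(p+re^{\mathrm i\theta})|$. Taylor's formula with integral remainder gives
\[
f(p+Re^{\mathrm i\theta})-f(p)-R\,\partial_{e_\theta}f(p)
=\int_0^R(R-r)\,\gamma_\theta''(r)\,dr .
\]
Integrating in $\theta\in[0,2\pi]$ kills the linear term, because $\int_0^{2\pi}\partial_{e_\theta}f(p)\,d\theta=\nabla f(p)\cdot\int_0^{2\pi}e_\theta\,d\theta=0$. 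This yields
\[
\int_0^{2\pi}f(p+Re^{\mathrm i\theta})\,d\theta-2\pi f(p)
\le\int_0^{2\pi}\!\!\int_0^R (R-r)\,|D^2f(p+re^{\mathrm i\theta})|\,dr\,d\theta .
\]

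The key step is to compare this polar integral with the area integral. The area element is $r\,dr\,d\theta$, while the weight here is $(R-r)$, which is not bounded by $r$ near $r=0$. So we cannot bound it pointwise, and we average over the base point instead.

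We use Green's representation on the disk in place of Taylor's formula. For $g\in C^2$,
\[
\frac1{2\pi}\int_0^{2\pi}g(p+Re^{\mathrm i\theta})\,d\theta-g(p)=\frac1{2\pi}\int_{B_R(p)}\log\frac{R}{|z-p|}\,\Delta g\,dV.
\]
Since $|\Delta g|\le\sqrt2\,|D^2g|$, this gives the kernel $\log(R/|z-p|)$ in front of $|D^2g|$. That kernel is still unbounded near $p$, so this route also fails on its own.

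The cleanest fix is a sharper one-dimensional identity along each ray. Write $f(p+Re^{\mathrm i\theta})-f(p)=\int_0^R\gamma_\theta'(r)\,dr$, and note that $\int_0^{2\pi}\gamma_\theta'(r)\,d\theta=\partial_r\int_0^{2\pi}f(p+re^{\mathrm i\theta})\,d\theta$. Let $M(r)=\int_0^{2\pi}f(p+re^{\mathrm i\theta})\,d\theta$. Then $M'(0)=0$, and by the divergence theorem $M'(r)=\frac1r\int_{B_r(p)}\Delta f\,dV$. Hence
\[
M(R)-M(0)=\int_0^R\frac1r\int_{B_r(p)}\Delta f\,dV\,dr .
\]
This has the same logarithmic weight after Fubini, so I expect the stated constant to require an additional idea: probably bounding $|\Delta f|$ on $B_r$ by $\int_{B_r}|D^2f|\le\int_{B_R}|D^2f|$ together with a trace-type estimate. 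This is the main obstacle.

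In the rotation-invariant case the obstacle disappears. For radial $f$ we have $|D^2f|\ge|f_{rr}|$ and also $|D^2f|\ge|f_r|/r$, since the Hessian eigenvalues are $f_{rr}$ and $f_r/r$. Therefore
\[
\int_{B_R}|D^2f|\,dV\ge\int_0^{2\pi}\!\!\int_0^R\frac{|f_r|}{r}\,r\,dr\,d\theta\ge2\pi\bigl(f(p+R)-f(p)\bigr),
\]
which is the second claim. For the general claim I would attempt the analogous decomposition $|D^2f|\ge|r^{-1}\partial_rf+r^{-2}\partial_\theta^2f|$, using the angular component of the Hessian. Integrating it over $B_R$ in polar coordinates, the $\partial_\theta^2$ term integrates to zero in $\theta$. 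This leaves $\int_0^R\bigl|\int_0^{2\pi}\partial_rf\,d\theta\bigr|\,dr\ge M(R)-M(0)$, which is the desired inequality.

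Finally, a general $f\in W^{2,1}_{\mathrm{loc}}$ is handled by mollification. Mollifiers converge in $W^{2,1}(B_R(p))$, and they converge locally uniformly because the continuous representative is locally uniformly approximated by its mollifications. Passing to the limit in both sides of the inequality gives the result for $f$.
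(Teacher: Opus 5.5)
Your proof is correct, and its final paragraph is essentially the paper's argument. You bound $|D^2f|$ below by the tangential second derivative $|r^{-1}f_r+r^{-2}f_{\theta\theta}|$, integrate in polar coordinates so that the $f_{\theta\theta}$ term drops out after $\theta$-integration, and reduce to smooth $f$ by mollification; the paper does the same, except that it integrates $D^2f(\tau_\theta,\tau_\theta)$ first and takes absolute values afterwards. The earlier Taylor and Green-kernel detours, and the remark that an additional idea is needed, are superseded by that paragraph and can simply be deleted.
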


\begin{proof}
By smooth approximation, we may assume that \(f\) is smooth near \(\overline{B_R(p)}\). Write \(z=p+re^{\mathrm i\theta}\) and \(\tau_\theta=(-\sin\theta,\cos\theta)\). In polar coordinates,
\[
D^2f(\tau_\theta,\tau_\theta)
=\frac{f_r}{r}+\frac{f_{\theta\theta}}{r^2}.
\]
Integrating over \(B_R(p)\) and using the periodicity in \(\theta\), we obtain
\[
\begin{aligned}
\int_{B_R(p)}D^2f(\tau_\theta,\tau_\theta)\,dV
&=\int_0^R\int_0^{2\pi}
\left(f_r+\frac{f_{\theta\theta}}r\right)\,d\theta\,dr\\
&=\int_0^{2\pi}f(p+Re^{\mathrm i\theta})\,d\theta-2\pi f(p).
\end{aligned}
\]
Since \(|\tau_\theta|=1\), we have \(|D^2f(\tau_\theta,\tau_\theta)|\leq |D^2f|\), which gives the desired inequality.
\end{proof}

For the use in Section \ref{sec:cantor-laplacian}, we briefly recall the Cantor probability measure, which we will use
to construct the solution $u_4$ whose Laplacian has a nonzero
singular part.

Let $C\subset[0,1]$ be the middle-third Cantor set, and set
$E=\lp2C-1\rp/4$ and $d=\log2/\log3\in(0,3/4)$.
Let $\nu$ be the Cantor probability measure on $E$, namely the unique
Borel probability measure supported on $E$ that assigns mass $2^{-n}$
to each of its $2^n$ basic intervals at level $n$, for every $n\ge0$.
This is the normalized $d$-dimensional Hausdorff measure on $E$.
Then $\operatorname{supp}\nu=E$.
Since $E$ has planar Lebesgue measure zero, $\nu$ is singular with
respect to planar Lebesgue measure.

As a reference, see \cite[Theorem~4.14]{Mattila1995}. There exist positive constants
$c_0$ and $C_0$ such that
\begin{equation}\label{cant:integral-measure-growth}
c_0r^d\le\nu(B(a,r))\le C_0r^d,
\qquad a\in E,\quad 0<r\le1.
\end{equation}
In particular, $\nu$ has no atoms.
After increasing $C_0$, we also have
$$
\nu(B(z,r))\le C_0r^d,
\qquad z\in\bC,\quad r>0.
$$

We recall the following standard properties of logarithmic and
Newtonian potentials; see \cite[Sections~4.2--4.3]{AG01}.

\begin{prop}\label{prelim:measure-poisson}
Let $n\ge2$ and let $\nu$ be a finite nonnegative Radon measure
with compact support in $\bR^n$.
For $x\ne0$, set
$$
J_n(x)=
\begin{cases}
\displaystyle-\frac1{2\pi}\log|x|, & n=2,\\[6pt]
\displaystyle\frac{|x|^{2-n}}{(n-2)\sigma_{n-1}}, & n\ge3,
\end{cases}
$$
where $\sigma_{n-1}$ is the surface area of the unit sphere
in $\bR^n$, and set $J_n(0)=+\infty$.
Then the potential
$$
U^\nu(x)=(J_n*\nu)(x)
=\int_{\bR^n}J_n(x-y)\,d\nu(y)
$$
belongs to $L^1_{\mathrm{loc}}(\bR^n)$.
It satisfies
$$
-\Delta U^\nu=\nu
$$
in the sense of distributions, where
$\Delta=\sum_{j=1}^n\partial^2/\partial x_j^2$,
and is smooth and harmonic on
$\bR^n\setminus\operatorname{supp}\nu$.

Consequently, $p=-U^\nu$ satisfies $\Delta p=\nu$
in the sense of distributions.
In particular, if $\nu$ is nonzero and singular with respect
to Lebesgue measure, then $\Delta p$ is a nonzero singular measure.
\end{prop}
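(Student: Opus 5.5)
Since the three assertions are the standard potential-theoretic facts in \cite[Sections~4.2--4.3]{AG01}, the plan is to verify them directly in three steps.

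\textbf{Local integrability.} First I show that $U^\nu\in L^1_{\mathrm{loc}}(\bR^n)$. Fix a ball $B_R(0)$. Since $\nu$ is nonnegative with compact support, Tonelli's theorem gives
\[
\int_{B_R}|U^\nu|\,dx\le\int\int_{B_R}|J_n(x-y)|\,dx\,d\nu(y).
\]
The inner integral is bounded uniformly for $y\in\operatorname{supp}\nu$. The reason is that $J_n$ is locally integrable: $\log|x|$ is integrable near $0$ in $\bR^2$, and $|x|^{2-n}$ is integrable near $0$ in $\bR^n$. Since $\nu$ is finite, the right-hand side is finite. The same estimate shows that $U^\nu(x)$ is finite for a.e.\ $x$.

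\textbf{The distributional equation.} Next I prove $-\Delta U^\nu=\nu$. Take a test function $\phi\in C_c^\infty(\bR^n)$. By Fubini, justified by the bound above,
\[
\langle U^\nu,\Delta\phi\rangle=\int\Bigl(\int J_n(x-y)\Delta\phi(x)\,dx\Bigr)d\nu(y).
\]
The inner integral equals $-\phi(y)$, since $J_n$ is the fundamental solution of $-\Delta$. To see this, excise a small ball $B_\eps(y)$ and apply Green's second identity. Use that $J_n$ is harmonic away from $0$. The boundary terms then converge to $-\phi(y)$, because the flux of $\nabla J_n$ through the sphere $\p B_\eps$ equals $-1$ with the chosen normalizations, while the term $\eps^{n-1}J_n(\eps)\to0$. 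Integrating in $\nu$ gives $\langle U^\nu,\Delta\phi\rangle=-\int\phi\,d\nu$, which is the equation.

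\textbf{Harmonicity off the support and the consequences.} Finally, let $x_0\notin\operatorname{supp}\nu$. On a neighborhood of $x_0$ at positive distance from the support, the kernel $J_n(x-y)$ is smooth in $x$ and harmonic, with derivatives bounded uniformly in $y\in\operatorname{supp}\nu$. Differentiating under the integral therefore shows that $U^\nu$ is smooth and harmonic there. The consequence for $p=-U^\nu$ is then immediate: $\Delta p=\nu$. If $\nu$ is nonzero and singular, then $\Delta p$ is a nonzero singular measure.

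The only step with any real content is the Green's identity computation of the boundary flux. Even that is classical, so I do not expect a genuine obstacle.
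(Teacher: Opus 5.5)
Your proof is correct and complete. The paper gives no proof of its own here and simply cites \cite[Sections~4.2--4.3]{AG01}; your three steps (Tonelli for local integrability, Fubini with Green's identity for the fundamental solution, and differentiation under the integral off the support) are the standard argument behind that citation, and your sign and normalization bookkeeping for the flux $-1$ checks out.
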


For the Cantor measure $\nu$ introduced above, the logarithmic
potential
$$
g(z_1)=\int_E\log|z_1-a|\,d\nu(a)
$$
is subharmonic on $\bC$, harmonic on $\bC\setminus E$, and satisfies
$$
\Delta_{z_1}g=2\pi\nu
$$
in the sense of distributions.
Moreover, \eqref{cant:integral-measure-growth} gives
\begin{align}\label{log-est}
\int_{E\cap B(z,r)}|\log|z-a||\,d\nu(a)
\le Cr^d\lp1+|\log r|\rp,
\qquad z\in\bC,\quad 0<r<1.
\end{align}
This estimate implies that $g$ is continuous on $\bC$.

In real dimension three, the normalized Newton kernel is
$$
J_3(x)=\frac1{4\pi|x|}.
$$
For a finite nonnegative Radon measure $\mu$ with compact support
in $\bC\times\bR$, the corresponding unnormalized Newton potential
$$
v(z_1,t)
=\int_{\bC\times\bR}
\frac{1}{\sqrt{|z_1-a|^2+(t-s)^2}}\,d\mu(a,s)
$$
belongs to $L^1_{\mathrm{loc}}(\bC\times\bR)$ and satisfies
$$
\Delta_{z_1}v+v_{tt}=-4\pi\mu
$$
in the sense of distributions.
Moreover, $v$ is smooth and harmonic outside
$\operatorname{supp}\mu$.
These properties follow from
\cite[Sections~4.2--4.3]{AG01}.
In Section~\ref{sec:cantor-laplacian}, we will apply this construction
to a weighted product measure to obtain the function
used in the construction of $u_4$.

\section{The partial Legendre transformation}
\label{partial Legendre}

We first prove Theorems~\ref{main1} and~\ref{main:legendre-reductions}
for solutions independent of $\im z_2$ and anti-diagonal $\bS^1$-invariant solutions,  respectively,
and describe the corresponding local inverse transformations.
These transformations will be used to construct the continuous
and discontinuous solutions in the following sections. 

\subsection{Solutions independent of one real variable}\label{sec:harmonic-legendre}

\begin{proof}[Proof of Theorem~\ref{main1}]
Since each nonempty fiber
$\Om_{z_1}=\{t\in\bR\,|\,(z_1,t)\in\Om\}$ is an interval and
$v_{tt}>0$, the map $t\mapsto v_t(z_1,t)$ is strictly increasing
on $\Om_{z_1}$. Hence $\Phi_v$ is injective.
Since $v$ is smooth and $\det D\Phi_v=v_{tt}>0$,
the inverse function theorem shows that $\Phi_v$ is a
diffeomorphism onto its open image $\Phi_v(\Om)$.

Write $z_2=x_2+\sq y_2$. The normalized partial Legendre transform for $u$ is defined by
\begin{equation*}
x_2=v_t(z_1,t),\qquad
v^*(z_1,x_2)=x_2t-v(z_1,t),\qquad
u(z_1,z_2)=4v^*(z_1,\re z_2),
\end{equation*}
where $t=t_v(z_1,x_2)$. All derivatives of $v$ below are evaluated
at $(z_1,t_v(z_1,x_2))$. Differentiating $x_2=v_t(z_1,t)$, we obtain
$$
t_{x_2}=\frac1{v_{tt}},\qquad
t_{z_1}=-\frac{v_{tz_1}}{v_{tt}},\qquad
t_{\bar z_1}=-\frac{v_{t\bar z_1}}{v_{tt}}.
$$
Using these identities to differentiate $u=4(x_2t-v)$ gives
\begin{equation*}
\begin{split}
u_{z_1\bar z_1}
&=-4v_{z_1\bar z_1}+\frac{4v_{z_1t}v_{\bar z_1t}}{v_{tt}},\\
u_{z_1\bar z_2}
&=-\frac{2v_{z_1t}}{v_{tt}},\qquad
u_{z_2\bar z_2}=\frac1{v_{tt}}.
\end{split}
\end{equation*}
Since $v$ is harmonic, we have $4v_{z_1\bar z_1}+v_{tt}=0$.
Taking the determinant, we obtain
\begin{equation*}
\det(u_{j\bar k})=-\frac{4v_{z_1\bar z_1}}{v_{tt}}=1.
\end{equation*}
Together with $u_{z_2\bar z_2}=1/v_{tt}>0$, this proves that $u$
is strictly plurisubharmonic. Thus $u$ is a smooth solution
of \eqref{CMAdensity} on $F=\Phi_v(\Om)\times\bR$.
By construction, $u$ is independent of $y_2$.

We now extend the solution $u$ constructed above across $S$.
Since $F$ is open, $S=E\setminus F$ is relatively closed in $E$.
By assumption, $S$ is locally complete pluripolar in $E$,
and $u$ is locally bounded on $F$ near every point of $S$.
Lemma~\ref{lem:pluripolar-extension} therefore gives a unique
extension, still denoted by $u$, belonging to
$PSH(E)\cap L^\infty_{\mathrm{loc}}(E)$.
This extension satisfies \eqref{CMApluipotential} on $E$
in the Bedford--Taylor sense.
By uniqueness, the extension remains independent of
$\operatorname{Im}z_2$.
This completes the proof.
\end{proof}

We next describe the local inverse transformation.
Write $z_2=x_2+\sq y_2$, and let $u$ be a real-valued $C^2$ function
independent of $y_2$, with $u_{x_2x_2}>0$.
By the inverse function theorem, we can locally define
\begin{equation}\label{eq:cylindrical-inverse-legendre}
t=\frac14u_{x_2}(z_1,x_2),\qquad
v(z_1,t)=x_2t-\frac14u(z_1,x_2),
\end{equation}
where $x_2=x_2(z_1,t)$ is determined by the first identity.
All derivatives of $u$ below are evaluated at $(z_1,x_2(z_1,t))$.
Differentiating, we obtain
$$
v_t=x_2,\qquad
v_{tt}=\frac4{u_{x_2x_2}}>0,\qquad
v_{z_1\bar z_1}=-\frac14u_{z_1\bar z_1}
+\frac{u_{z_1x_2}u_{\bar z_1x_2}}{4u_{x_2x_2}}.
$$
Since $u$ is independent of $y_2$, we also have
$$
u_{z_2\bar z_2}=\frac14u_{x_2x_2},\qquad
u_{z_1\bar z_2}=\frac12u_{z_1x_2}.
$$
Combining these identities gives
$$
4v_{z_1\bar z_1}+v_{tt}
=\frac4{u_{x_2x_2}}\lp1-\det(u_{j\bar k})\rp.
$$
Hence $\det(u_{j\bar k})=1$ if and only if
$4v_{z_1\bar z_1}+v_{tt}=0$.

\subsection{Anti-diagonal \(\mathbb S^1\)-invariant solutions}

\begin{proof}[Proof of Theorem~\ref{main:legendre-reductions}]
On $F=\Phi_v(\Om)\times\bR$, define
$$
u(z_1,z_2)=4v^*(z_1,\re z_2).
$$
The proof of Theorem~\ref{main1} shows that $u$ is a smooth,
strictly plurisubharmonic solution of \eqref{CMAdensity}.
Choose a local branch of $\log z_2$ in the map $\Pi$
defined in the Introduction. Then
$$
\hat u=u\circ\Pi.
$$
Since $u$ depends on $z_2$ only through $\re z_2$, this expression
is independent of the choice of branch. 
Moreover, $\Pi$ is locally biholomorphic, with
$$
\det D_{\bC}\Pi=1,
$$
where $D_{\bC}\Pi=(\partial\Pi_i/\partial z_j)_{i,j=1}^{2}$
denotes its complex Jacobian matrix. Hence $\hat u$ is smooth and strictly plurisubharmonic on $\hat F$,
and
$$
\det(\hat u_{j\bar k})
=\left|\det D_{\bC}\Pi\right|^2
\bigl(\det(u_{j\bar k})\circ\Pi\bigr)
=1.
$$
Both $z_1z_2$ and $\log |z_2|$ are invariant under the anti-diagonal $\bS^1$-action. Thus $\hat F$ and $\hat u$ are invariant under this action.

Since $\hat F$ is open, $\hat S=\hat E\setminus\hat F$ is
relatively closed in $\hat E$.
By assumption, $\hat S$ is locally complete pluripolar in $\hat E$,
and $\hat u$ is locally bounded on $\hat F$ near every point
of $\hat S$.
Lemma~\ref{lem:pluripolar-extension} therefore gives a unique
extension, still denoted by $\hat u$, belonging to
$PSH(\hat E)\cap L^\infty_{\mathrm{loc}}(\hat E)$.
This extension satisfies \eqref{CMApluipotential} on $\hat E$
in the Bedford--Taylor sense.
The set $\hat E$ is also invariant under the anti-diagonal $\bS^1$-action,
so uniqueness implies that the extension is anti-diagonal $\bS^1$-invariant.
This completes the proof.
\end{proof}

We next describe the local inverse transformation. Define the holomorphic map
\[
\Psi:\bC^2\longrightarrow\bC\times\bC^*,
\qquad
\Psi(z_1,z_2)=\bigl(z_1e^{-z_2},e^{z_2}\bigr).
\]
For any open interval $J\subset\bR$ of length less than $2\pi$,
the restriction of $\Psi$ to $\bC\times(\bR+\sq J)$ is
biholomorphic onto its image, with inverse given by a suitable
branch of $\Pi$. Moreover, $\det D_{\bC}\Psi=1$.

Let $\hat u$ be a smooth, strictly plurisubharmonic
solution of \eqref{CMAdensity} invariant under the anti-diagonal
$\bS^1$-action, and define $u=\hat u\circ\Psi$ on the
corresponding coordinate neighborhood. The same change of variables shows that $u$ is strictly
plurisubharmonic and satisfies $\det(u_{j\bar k})=1$.
Write $z_2=x_2+\sq y_2$.
In these coordinates, the anti-diagonal $\bS^1$-action changes only $y_2$,
so $u$ is independent of $y_2$.
Consequently,
$$
u_{x_2x_2}=4u_{z_2\bar z_2}>0.
$$
After shrinking the neighborhood if necessary, we can define
the local inverse transformation by
$$
t=\frac14u_{x_2}(z_1,x_2),\qquad
v(z_1,t)=x_2t-\frac14u(z_1,x_2),
$$
where $x_2=x_2(z_1,t)$ is determined by the first identity.
The inverse calculation following the proof of
Theorem~\ref{main1} gives
$$
v_{tt}=\frac4{u_{x_2x_2}}>0,\qquad
4v_{z_1\bar z_1}+v_{tt}=0.
$$
Thus $v$ is harmonic.
Returning to the original coordinates, we obtain
$$
\hat u(z_1,z_2)=4v^*\lp z_1z_2,\log |z_2|\rp.
$$

\section{Singular solutions from logarithmic poles}
\label{sec:logarithmic-poles}

In this section, we apply the partial Legendre transformation to construct singular solutions from logarithmic poles. We first recall the single-pole solution, then superpose infinitely many poles to obtain a continuous solution outside $W^{2,1}_{\mathrm{loc}}$, and finally use a dense family of poles to construct a bounded solution that is discontinuous almost everywhere.

\subsection{A single logarithmic pole model}
\label{sec:single-source-nondini}
Consider the negative of the two-dimensional fundamental solution:
\[
b_0(z_1)=-\log |z_1|.
\]
It is positive and harmonic on $\bD\setminus\{0\}$.
Set
$$
h_0(z_1)=-2|z_1|^2(\log |z_1|-1),
$$
so that $(h_0)_{z_1\bar z_1}=2b_0$.
By Theorem~\ref{main1}, the harmonic function
$$
v_{\mathrm{sp}}(z_1,t)
=b_0(z_1)t^2-\frac14h_0(z_1)
$$
corresponds under the normalized partial Legendre transformation to
$$
U_{\mathrm{sp}}(z_1,z_2)
=\frac{x_2^2}{b_0(z_1)}+h_0(z_1).
$$
Since $b_0^{-1}$ and $h_0$ extend continuously by zero at $z_1=0$,
the same formula defines a continuous extension of $U_{\mathrm{sp}}$
to $\bD\times\bC$, with $U_{\mathrm{sp}}(0,z_2)=0$.
Lemma~\ref{lem:pluripolar-extension} shows that this extension
satisfies \eqref{CMApluipotential} in the Bedford--Taylor sense.
This is precisely the solution \eqref{notdini}.

We call the singularity of $b_0$ at $0$ a logarithmic pole.
By superposing logarithmic poles at suitable points, we will construct
the solutions in Theorems~\ref{theo:nonproper-w21-main}
and~\ref{theo:dense-poles-main}.

\subsection{Harmonic functions quadratic in one variable}
The preceding model uses a harmonic function that is quadratic in $t$.
The following proposition gives the corresponding construction
for a general positive harmonic coefficient.

\begin{prop}\label{cor:reciprocal-harmonic}
Let $D\subset\bC$ be a domain and let $b,h\in C^2(D)$ satisfy
\begin{equation*}
b>0,\qquad b_{z_1\bar z_1}=0,\qquad h_{z_1\bar z_1}=2b.
\end{equation*}
The function
\begin{equation}\label{eq:reciprocal-harmonic-primal}
u(z_1,z_2)=h(z_1)+\frac{(\re z_2)^2}{b(z_1)}
\end{equation}
is a smooth, strictly plurisubharmonic solution of
\eqref{CMAdensity} on $D\times\bC$.
Suppose further that $D=G\setminus A$, where $G\subset\bC$
is a domain and $A$ is a closed locally complete pluripolar
subset of $G$.
If $h$ and $b^{-1}$ are locally bounded near $A$, then $u$
extends uniquely to a locally bounded
plurisubharmonic solution of \eqref{CMApluipotential}
in the Bedford--Taylor sense on $G\times\bC$.
If $h$ and $b^{-1}$ extend continuously to $G$, then the
extension is continuous and given by the same formula
with the extended coefficients.
\end{prop}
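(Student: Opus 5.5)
The plan is to apply Theorem~\ref{main1} to the harmonic function quadratic in $t$,
\[
v(z_1,t)=b(z_1)\,t^2-\tfrac14 h(z_1),\qquad (z_1,t)\in \Omega:=D\times\bR .
\]
Harmonicity follows from the hypotheses:
\[
4v_{z_1\bar z_1}+v_{tt}=4b_{z_1\bar z_1}t^2-h_{z_1\bar z_1}+2b=0-2b+2b=0 .
\]
Moreover $v_{tt}=2b>0$, and every fiber $\Omega_{z_1}=\bR$ is connected. The map $\Phi_v(z_1,t)=(z_1,2b(z_1)t)$ is onto $D\times\bR$, so $F=D\times\bC$. Solving $x_2=2bt$ gives $t=x_2/(2b)$, and then
\[
4\bigl(x_2t-v\bigr)=4\Bigl(\frac{x_2^2}{2b}-\frac{x_2^2}{4b}+\frac h4\Bigr)=\frac{x_2^2}{b}+h .
\]
This is exactly \eqref{eq:reciprocal-harmonic-primal}. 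Theorem~\ref{main1} therefore gives smoothness, strict plurisubharmonicity and $\det(u_{j\bar k})=1$ on $D\times\bC$. (Strictness comes from $u_{z_2\bar z_2}=1/v_{tt}>0$ together with determinant $1$.) Smoothness in the stated sense uses that $b,h$ are real-analytic: $b$ is harmonic, and $h$ solves a Poisson equation with smooth right-hand side. Alternatively, one can check the Hessian directly:
\[
u_{z_2\bar z_2}=\frac1{2b},\qquad u_{z_1\bar z_2}=-\frac{x_2b_{z_1}}{b^2},\qquad u_{z_1\bar z_1}=2b+\frac{2x_2^2|b_{z_1}|^2}{b^3},
\]
where the last identity uses $b_{z_1\bar z_1}=0$. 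The determinant is $1$.

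For the extension, set $\mathcal O=G\times\bC$ and $\mathcal A=A\times\bC$. I need $\mathcal A$ to be closed and locally complete pluripolar in $\mathcal O$. Closedness is immediate. Near a point of $A$, take $\varphi\in SH(U)$, $\varphi\not\equiv-\infty$, with $A\cap U=\{\varphi=-\infty\}$; then $\varphi(z_1)$ is plurisubharmonic on $U\times\bC$ with $-\infty$-locus $(A\cap U)\times\bC$. Here I use that in one variable, pluripolar coincides with polar, and subharmonic functions of $z_1$ are plurisubharmonic in $(z_1,z_2)$.

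Local boundedness of $u$ near $\mathcal A$ is where some care is needed. Near a point $(a,z_2^0)$ we have $|x_2|$ bounded, and $h$ and $b^{-1}$ are bounded near $a$ by hypothesis, so $u$ is bounded there. We also have $u\in L^\infty_{\mathrm{loc}}(\mathcal O\setminus\mathcal A)$, since it is continuous there. Lemma~\ref{lem:pluripolar-extension} with $f\equiv1$ then gives the unique locally bounded plurisubharmonic extension solving \eqref{CMApluipotential} on $G\times\bC$.

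Finally, suppose $h$ and $b^{-1}$ extend continuously to $G$, the latter with values in $[0,\infty)$. Then the formula $\tilde h+x_2^2\,\widetilde{(b^{-1})}$ is continuous on $G\times\bC$ and agrees with $u$ off $\mathcal A$. By the clause of Lemma~\ref{lem:pluripolar-extension} that any continuous extension agrees with $\tilde v$, this formula equals the plurisubharmonic extension.

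The only step I expect to need care is the local complete pluripolarity of the product $A\times\bC$, which is handled by pulling back the defining subharmonic function via the projection to $z_1$.
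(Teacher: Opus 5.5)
Your proof is correct and follows the paper's own argument: you realize $u$ as the normalized partial Legendre transform of the $t$-quadratic harmonic function $v=b\,t^2-\frac14 h$ on $D\times\bR$ and invoke Theorem~\ref{main1}, then apply Lemma~\ref{lem:pluripolar-extension} on $G\times\bC$ with singular set $A\times\bC$, including its clause that any continuous extension agrees with the plurisubharmonic one. Your additional checks fill in details the paper leaves implicit: elliptic regularity to upgrade $b,h\in C^2$ to smooth, the direct complex Hessian computation, and pulling back the defining subharmonic function under $(z_1,z_2)\mapsto z_1$ to show $A\times\bC$ is locally complete pluripolar.
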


\begin{proof}
Consider the quadratic function
\begin{equation*}
v(z_1,t)=b(z_1)t^2-\frac14h(z_1),
\qquad (z_1,t)\in D\times\bR.
\end{equation*}
We have
$$
v_{tt}=2b>0,\qquad
4v_{z_1\bar z_1}+v_{tt}=-h_{z_1\bar z_1}+2b=0.
$$
Thus $v$ is harmonic and strictly convex in $t$.
For each fixed $z_1$, the map $t\mapsto v_t(z_1,t)$ maps $\bR$
bijectively onto itself, with
\begin{equation}\label{eq:reciprocal-harmonic-fibre}
x_2=v_t=2bt,\qquad t=\frac{x_2}{2b}.
\end{equation}
Substituting \eqref{eq:reciprocal-harmonic-fibre} into $4(x_2t-v)$
and taking $x_2=\re z_2$ gives \eqref{eq:reciprocal-harmonic-primal}.
The stated properties of $u$ follow from Theorem~\ref{main1}.

The bounds on $h$ and $b^{-1}$ imply that $u$ is locally bounded near $A\times\mathbb C$.
Lemma~\ref{lem:pluripolar-extension} therefore gives the
unique locally bounded plurisubharmonic extension and
the equation in the Bedford--Taylor sense. If $h$ and $b^{-1}$ extend continuously to $G$, the same
formula defines a continuous extension of $u$.
By the same lemma, this agrees with the plurisubharmonic extension.
\end{proof}

\subsection{A continuous solution outside \(W^{2,1}_{\mathrm{loc}}\)}
\label{sec:old-w21}
We superpose a logarithmic pole at \(0\) with infinitely many logarithmic poles converging to \(0\). The pole at \(0\) ensures continuity at the accumulation point, while the weights of the remaining poles are chosen so that the solution lies outside \(W^{2,1}_{\mathrm{loc}}\).
\subsubsection{Construction}
Let \(p_k=2^{-k}\) for \(k\geq4\), and set \(E_1=\{0,p_4,p_5,\ldots\}\). For $p\in E_1$, define on $\bD\setminus\{p\}$
$$
L_p(z_1)=\log\frac2{|z_1-p|},\qquad
R_p(z_1)=|z_1-p|^2\lp L_p(z_1)+1\rp.
$$
Then $L_p$ is positive and harmonic, and
$(R_p)_{z_1\bar z_1}=L_p$ off $p$. Set $R_p(p)=0$. Then each $R_p$ belongs to $C^1(\bD)$. Since \(|z_1-p|<17/16\), the functions \(R_p\) and their first derivatives are uniformly bounded on \(\mathbb D\), independently of \(p\in E_1\).

Let \(\varepsilon_k=2^{-k}\) for \(k\geq4\). Define \(b\) on \(\mathbb D\setminus E_1\) and \(h\) on \(\mathbb D\) by
\begin{equation*}
b(z_1)=L_0(z_1)+\sum_{k=4}^{\infty}\eps_kL_{p_k}(z_1),\qquad
h(z_1)=2R_0(z_1)+2\sum_{k=4}^{\infty}\eps_kR_{p_k}(z_1).
\end{equation*}
Both series converge, together with all their derivatives,
uniformly on compact subsets of $\mathbb D\setminus E_1$.
Hence
$$
b>0,\qquad b_{z_1\bar z_1}=0,\qquad h_{z_1\bar z_1}=2b
\quad\text{on }\mathbb D\setminus E_1.
$$
Thus the function
\begin{equation*}
v_1(z_1,t)=b(z_1)t^2-\frac14h(z_1),
\qquad (z_1,t)\in(\mathbb D\setminus E_1)\times\bR,
\end{equation*}
is harmonic.
Applying Proposition~\ref{cor:reciprocal-harmonic} gives
\begin{equation*}
u_1(z_1,z_2)
=\frac{x_2^2}{b(z_1)}+h(z_1),
\qquad x_2=\re z_2.
\end{equation*}
We extend \(u_1\) to \(\mathbb D\times\mathbb C\) by setting \(b^{-1}=0\) on \(E_1\), so that
\[
u_1(p,z_2)=h(p),\qquad (p,z_2)\in E_1\times\mathbb C.
\]
The extension remains independent of \(y_2\).

\subsubsection{Verification of continuity and the equation}

The uniform bounds on \(R_p\) and its first derivatives, together with \(\sum_{k=4}^{\infty}\varepsilon_k<\infty\), imply that \(h\in C^1(\mathbb D)\). Since \(b\geq L_0\) and \(b\geq\varepsilon_kL_{p_k}\) for every \(k\geq4\), we have \(b(z_1)\to+\infty\) as \(z_1\to p\) for every \(p\in E_1\). Hence \(b^{-1}\) extends continuously by zero on \(E_1\), and \(u_1\in C(\mathbb D\times\mathbb C)\).

To apply the extension statement of Proposition \ref{cor:reciprocal-harmonic}, we first show that \(E_1\) is complete pluripolar. Define on \(\mathbb D\)
\[
\psi(z_1)=\log|z_1|+\sum_{k=4}^{\infty}
2^{-k}\log\frac{|z_1-p_k|}{2}.
\]
The partial sums decrease to \(\psi\) and converge locally uniformly off \(E_1\). Thus \(\psi\) is subharmonic on \(\mathbb D\) and satisfies \(\{\psi=-\infty\}=E_1\), which proves that \(E_1\) is complete pluripolar.

Since \(E_1\) is closed and both \(h\) and \(b^{-1}\) extend continuously to \(\mathbb D\), Proposition ~\ref{cor:reciprocal-harmonic}, with \(G=\mathbb D\), \(A=E_1\), gives
\[
u_1\in PSH(\mathbb D\times\mathbb C)\cap C(\mathbb D\times\mathbb C)
\]
and \((dd^cu_1)^2=\beta_2\) in the Bedford--Taylor sense. By Proposition \ref{prop:bt-viscosity}, \(u_1\) is also a viscosity solution.

\subsubsection{Failure of Dini continuity and $W^{2,1}_{\mathrm{loc}}$ regularity}
For \(k\geq4\), set \(D_k=B_{2^{-k-3}}(p_k)\)\(\subset\mathbb C\). We first establish a lower bound for \(u_1(z_1,z_2)-u_1(p_k,z_2)\), uniformly for \(z_1\in\partial D_k\) and \(z_2\in\mathbb C\) with \(x_2=\re z_2\in[1,2]\). This estimate will be used in both proofs. 

In the estimates below, \(c\) and \(C\) denote positive constants independent of \(j,k,z_1,z_2\), and may change from line to line. The notation \(O(1)\) denotes a quantity bounded uniformly for \(k\geq4\) and \(z_1\in\partial D_k\).

Fix \(z_1\in\partial D_k\). For \(j\ne k\), we have
\[
\begin{aligned}
|z_1-p_j|&\geq |p_k-p_j|-|z_1-p_k|\\
&=\bigl(1-2^{-|j-k|}\bigr)2^{-\min\{j,k\}}-2^{-k-3}\\
&>2^{-2-\min\{j,k\}}.
\end{aligned}
\]
Hence \(0<L_{p_j}(z_1)\leq Cj\) for \(j\ne k\), and therefore
\[
0\leq\sum_{j\ne k}\varepsilon_jL_{p_j}(z_1)
\leq C\sum_{j=4}^{\infty} j\varepsilon_j<\infty.
\]
Since \(\frac78\,2^{-k}\leq|z_1|\leq\frac98\,2^{-k}\) and \(|z_1-p_k|=2^{-k-3}\), we also have
\[
L_0(z_1)=k\log2+O(1),
\qquad L_{p_k}(z_1)=(k+4)\log2.
\]
Combining these observations gives
\[
b(z_1)
=L_0(z_1)+\sum_{j\ne k}\varepsilon_jL_{p_j}(z_1)
+\varepsilon_k(k+4)\log2
=k\log2+O(1).
\]

Since \(h\in C^1(\mathbb D)\), its gradient is bounded near \(0\), so
\[
|h(z_1)-h(p_k)|\leq C|z_1-p_k|\leq C2^{-k}.
\]
Combining these estimates with \(u_1(p_k,z_2)=h(p_k)\), we obtain
\begin{equation}\label{eq:old-boundary-gap}
\begin{aligned}
u_1(z_1,z_2)-u_1(p_k,z_2)
&=\frac{x_2^2}{b(z_1)}+h(z_1)-h(p_k)\\
&\geq\frac{2c}{k}-C2^{-k}
\geq\frac{c}{k}.
\end{aligned}
\end{equation}
for all sufficiently large \(k\), uniformly for \(z_1\in\partial D_k\) and \(x_2\in[1,2]\).

We next show that $u_1$ is not Dini continuous at $(0,1)$.
For \(0<\rho<1\), define
$$
\omega(\rho)
=\sup_{|\zeta-(0,1)|\leq\rho}
|u_1(\zeta)-u_1(0,1)|.
$$
Both \((p_k,1)\) and \((p_k+2^{-k-3},1)\) lie within distance \(2^{1-k}\) of \((0,1)\). Choose \(k_0\geq4\) sufficiently large. For every \(k\geq k_0\), the triangle inequality and \eqref{eq:old-boundary-gap} give
\[
\begin{aligned}
2\omega(2^{1-k})
&\geq\left|u_1(p_k+2^{-k-3},1)-u_1(0,1)\right|
+\big|u_1(p_k,1)-u_1(0,1)\big|\\
&\geq u_1(p_k+2^{-k-3},1)-u_1(p_k,1)\\
&\geq\frac{c}{k}.
\end{aligned}
\]
Since \(\omega\) is nondecreasing, we have
\[
\begin{aligned}
\int_0^{2^{2-k_0}}\frac{\omega(\rho)}{\rho}\,d\rho
&=\sum_{k=k_0}^{\infty}
\int_{2^{1-k}}^{2^{2-k}}\frac{\omega(\rho)}{\rho}\,d\rho\\
&\geq(\log2)\sum_{k=k_0}^{\infty}\omega(2^{1-k})\\
&\geq\frac{c\log2}{2}\sum_{k=k_0}^{\infty}\frac1k
=+\infty.
\end{aligned}
\]
Thus $u_1$ is not Dini continuous at $(0,1)$.

Finally, we prove the failure of $W^{2,1}_{\mathrm{loc}}$ regularity.
Set
$$
Q=\{x_2+\sq y_2\,|\,1\leq x_2\leq2,\ |y_2|\leq 1\},
\qquad B_k=D_k\times Q.
$$
The sets $B_k$ are pairwise disjoint and lie in a fixed compact
subset of $\bD\times\bC$.
Suppose, to the contrary, that
$u_1\in W^{2,1}_{\mathrm{loc}}(\bD\times\bC)$.
By the Sobolev slicing property, \(u_1(\cdot,z_2)\) belongs to \(W^{2,1}_{\mathrm{loc}}(\mathbb D)\) for almost every \(z_2\in Q\). Applying Proposition \ref{prop:real-hessian-bound} to these slices on \(D_k\), integrating over \(Q\), and using \eqref{eq:old-boundary-gap}, we obtain for all sufficiently large \(k\)
\begin{equation}\label{eq:old-harmonic-series}
\begin{aligned}
\int_{B_k}|\D^2_{z_1}u_1|\,dV
&\geq\int_Q\int_0^{2\pi}
\lp u_1\big(p_k+2^{-k-3}e^{\sq\theta},z_2\big)
-u_1(p_k,z_2)\rp\,d\theta\,dV(z_2)\\
&\geq\frac{c}{k},
\end{aligned}
\end{equation}
where $\D^2_{z_1}$ is the real Hessian in $z_1$.
Since $|\D^2u_1|\geq|\D^2_{z_1}u_1|$, summing
\eqref{eq:old-harmonic-series} over the disjoint sets $B_k$ for all sufficiently large \(k\) contradicts the local integrability of $\D^2u_1$.
Therefore $u_1\notin W^{2,1}_{\mathrm{loc}}(\bD\times\bC)$.

\subsection{A bounded solution discontinuous almost everywhere}
\label{sec:logarithmic-regularity}

In this section, we prove Theorem~\ref{theo:dense-poles-main}. We first construct a locally bounded solution $u$ on \(\mathbb D\times\mathbb C\) by superposing logarithmic poles at a countable dense set of points in \(\mathbb D\). This solution is discontinuous almost everywhere and is not classically differentiable at any point where \(x_2\ne0\). It lies outside \(W^{2,1}_{\mathrm{loc}}\), although its distributional Laplacian belongs to \(L^1_{\mathrm{loc}}\). We then replace \(z_2\) by \(z_2+2\) and restrict the solution to \(\mathbb D^2\), obtaining the bounded, nowhere classically differentiable solution $u_2$ in Theorem~\ref{theo:dense-poles-main}.

\subsubsection{Construction}
Let \(A=\{p_k\}_{k\ge1}\subset\mathbb D\) be a countable dense set of distinct points. Choose positive weights \(\varepsilon_k\) with \(\sum_k\varepsilon_k<\infty\). As in Section~\ref{sec:old-w21}, for \(p\in\mathbb D\), set
\[
L_p(z_1)=\log\frac2{|z_1-p|},
\qquad
R_p(z_1)=|z_1-p|^2\bigl(L_p(z_1)+1\bigr),
\]
with \(L_p(p)=+\infty\) and \(R_p(p)=0\). Define
\[
b(z_1)=\sum_{k=1}^{\infty}\varepsilon_kL_{p_k}(z_1),
\qquad
h(z_1)=2\sum_{k=1}^{\infty}\varepsilon_kR_{p_k}(z_1).
\]
Set \(P=\{z_1\in\mathbb D\,|\,b(z_1)=+\infty\}\) and define \(b^{-1}=0\) on \(P\). We then define \(u\) on \(\mathbb D\times\mathbb C\) by
\[
u(z_1,z_2)=h(z_1)+\frac{x_2^2}{b(z_1)}.
\]

To prove that \(u\) satisfies the equation in the Bedford–Taylor sense, we approximate it using solutions associated with finitely many poles. For \(N\ge1\), let \(b_N\) and \(h_N\) denote the \(N\)th partial sums of the series defining \(b\) and \(h\), respectively. On \(\mathbb D\setminus\{p_1,\ldots,p_N\}\), we have
\[
(b_N)_{z_1\bar z_1}=0,
\qquad
(h_N)_{z_1\bar z_1}=2b_N.
\]
Moreover,
\[
b_N(z_1)\ge
\varepsilon_1\log\frac2{1+|p_1|}
=:c_0>0.
\]
Since \(h_N\) is continuous and \(b_N^{-1}\) extends continuously by zero at each pole, Proposition \ref{cor:reciprocal-harmonic} gives a continuous Bedford–Taylor solution
\[
u_N(z_1,z_2)
=h_N(z_1)+\frac{x_2^2}{b_N(z_1)}
\]
on \(\mathbb D\times\mathbb C\).
The bounds \(0\le R_p\le4\) and \(b_N\ge c_0\), together with the summability of the weights, show that the functions \(u_N\) are locally uniformly bounded. Moreover, since \(b_N\nearrow b\) and \(R_p\le4\),
\[
u_N+8\sum_{k>N}\varepsilon_k\searrow u.
\]
Adding a constant does not change the Monge–Ampère measure. Lemma \ref{convergence-theorem} therefore gives
\[
u\in PSH(\mathbb D\times\mathbb C)
\cap L^\infty_{\mathrm{loc}}(\mathbb D\times\mathbb C),
\qquad
(dd^cu)^2=\beta_2.
\]

\subsubsection{Singular behavior}
The functions \(R_p\) and their first derivatives are uniformly bounded on \(\mathbb D\), independently of \(p\in\mathbb D\). The summability of the weights therefore implies that \(h\in C^1(\mathbb D)\). Moreover, since \(\mathbb D\subset B_2(p)\), we have
\[
\int_{\mathbb D}L_p(z_1)^2\,dV(z_1)
\le 2\pi\int_0^2 r\left(\log\frac2r\right)^2\,dr
=2\pi.
\]
By Minkowski’s inequality,
\[
\|b\|_{L^2(\mathbb D)}
\le \sum_{k=1}^{\infty}\varepsilon_k
\|L_{p_k}\|_{L^2(\mathbb D)}
\le \sqrt{2\pi}\sum_{k=1}^{\infty}\varepsilon_k
<\infty.
\]
In particular, \(b\) is finite almost everywhere, so \(P\) has Lebesgue measure zero.

Next, we show that the continuity set of \(u\) is precisely
\[
(P\times\mathbb C)\cup
\bigl(\mathbb D\times \{0\}\times\mathbb R\bigr).
\]
Indeed, each \(b_N\) is lower semicontinuous, so \(b=\sup_N b_N\) is lower semicontinuous. Hence \(b(z_1)\to+\infty\) as \(z_1\to q\) for every \(q\in P\), and thus \(b^{-1}(z_1)\to0\). Together with the continuity of \(h\), this proves that \(u\) is continuous at every point of \(P\times\mathbb C\). Clearly, \(u\) is also continuous at every point where \(x_2=0\).

On the other hand, fix \(q\in\mathbb D\setminus P\) and \(z_2\in\mathbb C\) with \(x_2\ne0\). Since \(A\) is dense, there is a sequence of poles \(p_{k_j}\to q\). Then
\begin{equation*}
\begin{split}
\lim_{j\rightarrow\infty}u(p_{k_j},z_2)
=\lim_{j\rightarrow\infty}h(p_{k_j})=h(q)
<h(q)+\frac{x_2^2}{b(q)}
=u(q,z_2),
\end{split}
\end{equation*}
so \(u\) is discontinuous at \((q,z_2)\). Therefore, its discontinuity set is precisely 
\[
(\mathbb D\setminus P)\times
(\mathbb R\setminus\{0\})\times\mathbb R.
\]
Thus \(u\) is discontinuous almost everywhere.

\begin{rem}
A standard consequence of the Baire category theorem (see \cite[Theorem 48.5]{Munkres}) is that a pointwise limit of continuous functions on a Euclidean domain has a dense set of continuity points. Since a locally bounded plurisubharmonic function is locally a pointwise limit of smooth functions, its continuity points are dense. In particular, it cannot be discontinuous everywhere.
\end{rem}

We next prove that \(u\) is not classically differentiable at any point where \(x_2\ne0\). Fix \((q,z_2^0)\in\mathbb D\times\mathbb C\) with \(x^0_2=\re z_2^0\ne0\). If \(q\notin P\), then \(u\) is discontinuous at \((q,z_2^0)\), as proved above, and hence is not differentiable there.

Suppose that \(q\in P\) and that \(u\) is differentiable at \((q,z_2^0)\). Since \(u(q,z_2^0)=h(q)\), there exist \(C>0\) and \(r>0\), with \(B_r(q)\Subset\mathbb D\), such that, for every \(z_1\in B_r(q)\),
\[
\begin{aligned}
0\le \frac{(x_2^0)^2}{b(z_1)}
&=u(z_1,z_2^0)-h(z_1)\\
&=\bigl(u(z_1,z_2^0)-u(q,z_2^0)\bigr)
-\bigl(h(z_1)-h(q)\bigr)\\
&\le |u(z_1,z_2^0)-u(q,z_2^0)|
+|h(z_1)-h(q)|\\
&\le C|z_1-q|,
\end{aligned}
\]
where the last inequality follows from the differentiability of \(u\) at \((q,z_2^0)\) and the fact that \(h\in C^1(\mathbb D)\). Consequently, for every \(z_1\in B_r(q)\setminus\{q\}\),
\[
b(z_1)\ge \frac{(x_2^0)^2}{C|z_1-q|}.
\]
This contradicts \(b\in L^2(\mathbb D)\), since
\[
\int_{B_r(q)}b(z_1)^2\,dV(z_1)
\ge C^{-2}\int_{B_r(q)}\frac{dV(z_1)}{|z_1-q|^2}
=+\infty.
\]

We also have \(u\notin W^{2,1}_{\mathrm{loc}}(\mathbb D\times\mathbb C)\). Otherwise, by Sobolev slicing,
\(
u(\cdot,z_2)\in W^{2,1}_{\mathrm{loc}}(\mathbb D)
\)
for almost every \(z_2\in\mathbb C\). Each such slice is subharmonic and therefore continuous by Proposition \ref{w2,1continuous}. This contradicts the fact that every slice with \(x_2\ne0\) is discontinuous at every point of \(\mathbb D\setminus P\).

\section{A continuous exhausting solution outside \(W^{2,1}_{\mathrm{loc}}\)}
\label{sec:proper-w21}

In this section, we prove Theorem \ref{theo:proper-w21-main}. The Newton kernel
\((|z_1|^2+t^2)^{-1/2}\) is a natural candidate for \(v_{tt}\).
Integrating this kernel twice in \(t\), with integration terms
chosen to preserve harmonicity, yields, up to normalization,
the harmonic dual of Warren's solution, which we denote by \(h_W\).

As in Section \ref{sec:logarithmic-poles}, we use the superposition principle: we add to \(h_W\) positive weighted sums of its translates and reflections in \(t\). This preserves harmonicity and strict convexity in \(t\). The resulting function also grows superlinearly as \(t\to\pm\infty\), so its partial Legendre transform is defined for every value of the dual variable at each \(z_1\) outside the singular set. After continuous extension and holomorphic pullback, we obtain a continuous proper  solution in the Bedford–Taylor sense on \(\mathbb C^2\).

\subsection{Superposition of \texorpdfstring{Warren’s}{Warren’s} harmonic dual}

We write Warren's solution \eqref{War} as
$U_W(z_1,x_2)=e^{2x_2}+|z_1|^2e^{-2x_2}$,
where $x_2=\re z_2$.
For $z_1\ne0$, the inverse transformation
\eqref{eq:cylindrical-inverse-legendre} gives
$$
t=\frac14(U_W)_{x_2}
=\frac12\lp e^{2x_2}-|z_1|^2e^{-2x_2}\rp.
$$
Set $R=\sqrt{|z_1|^2+t^2}$.
Then $U_W=2R$ and $x_2=\frac12\log(t+R)$, so
\begin{equation*}
h_W(z_1,t)=x_2t-\frac14U_W(z_1,x_2)
=\frac12\lp t\log(t+R)-R\rp.
\end{equation*}
By the inverse transformation in Section~\ref{partial Legendre},
$h_W$ is harmonic on $(\bC\setminus\{0\})\times\bR$, with
$$
(h_W)_t=\frac12\log(t+R),\qquad
(h_W)_{tt}=\frac1{2R}>0.
$$
On the half-space \(t\geq0\), \(h_W\) extends continuously across \(z_1=0\), with
\[
h_W(0,t)=\frac12t\bigl(\log(2t)-1\bigr),
\]using the convention \(0\log0=0\). For each fixed \(t<0\), however, \(h_W(z_1,t)\to+\infty\) as \(z_1\to0\).

For $\xi\in\bC$ and $s\in\bR$, the function
$h_W(z_1-\xi,s-t)$ is harmonic and strictly convex in $t$
on $(\bC\setminus\{\xi\})\times\bR$, with
\begin{equation*}
\lp h_W(z_1-\xi,s-t)\rp_{tt}
=\frac1{2\sqrt{|z_1-\xi|^2+(s-t)^2}}.
\end{equation*}
On the half-space $t\leq s$, $h_W(z_1-\xi,s-t)$ extends
continuously to $z_1=\xi$, with
\begin{equation}\label{prw21:eq:capped-source}
h_W(0,s-t)=\frac{1}{2}(s-t)\lp\log\lp2(s-t)\rp-1\rp,
\qquad t\leq s.
\end{equation}
For each fixed $t>s$, however,
$h_W(z_1-\xi,s-t)\to+\infty$ as $z_1\to\xi$.

For \(j\geq1\) and \(k\geq4\), set $s_j=1+2^{-j}\in\mathbb R$, $\xi_k=1+2^{-k}\in\mathbb C$. Define
\begin{equation*}
\begin{aligned}
C(z_1,t)&=2\sum_{j=1}^{\infty}2^{-j}h_W(z_1-1,s_j-t),\\
S(z_1,t)&=2\sum_{k=4}^{\infty}3^{-k}h_W(z_1-\xi_k,1-t).
\end{aligned}
\end{equation*}
We then define the harmonic function used in our construction by
\begin{equation*}
v_3=h_W+C+S.
\end{equation*}
The accumulation of the levels $s_j$ in $C$ gives a
$1/\log(1/r)$ lower bound for the modulus of continuity,
so the resulting solution is not Dini continuous.
The terms in $S$ give nonsummable lower bounds for the integrals
of the real Hessian over disjoint regions,
so the solution does not belong to $W^{2,1}_{\mathrm{loc}}$.
We prove these assertions in the following subsections.

Let $E=\{1,\xi_4,\xi_5,\ldots\}$. The series defining $C$ and $S$ converge, together with all their derivatives,
uniformly on compact subsets of $(\bC\setminus E)\times\bR$.
Thus $v_3$ is harmonic on
$(\bC\setminus(E\cup\{0\}))\times\bR$, with
\begin{equation*}
\begin{aligned}
&(v_3)_{t}
=\frac{1}{2}\log\left(\sqrt{|z_1|^2+t^2}+t\right)
-\sum_{j=1}^{\infty}
2^{-j}\log\left(\sqrt{|z_1-1|^2+(s_j-t)^2}+(s_j-t)\right)\\
&\hphantom{(v_3)_{tt}=}\ \ {-}
\sum_{k=4}^{\infty}
3^{-k}\log\left(\sqrt{|z_1-\xi_k|^2+(1-t)^2}+1-t\right),
\end{aligned}
\end{equation*}
and
\begin{equation}\label{prw21:eq:total-harmonic}
\begin{aligned}
&(v_3)_{tt}
=\frac1{2\sqrt{|z_1|^2+t^2}}
+\sum_{j=1}^{\infty}
\frac{2^{-j}}{\sqrt{|z_1-1|^2+(s_j-t)^2}}\\
&\hphantom{(v_3)_{tt}=}\ \ +
\sum_{k=4}^{\infty}
\frac{3^{-k}}{\sqrt{|z_1-\xi_k|^2+(1-t)^2}}>0.
\end{aligned}
\end{equation}
For each $z_1\notin E\cup\{0\}$, the convex function
$(C+S)(z_1,\cdot)$ has an affine lower bound.
Since $h_W(z_1,\cdot)$ grows superlinearly, we have
$v_3(z_1,t)/|t|\to+\infty$ as $t\to\pm\infty$.
Thus, for every \(x_2\in\mathbb R\), the supremum defining
\begin{equation}\label{prw21:eq:cylindrical-potential}
U_3(z_1,x_2)=4\sup_{t\in\mathbb R}
\left\{x_2t-v_3(z_1,t)\right\}
\end{equation}
is attained at a unique \(t\), determined by
$(v_3)_t(z_1,t)=x_2$.
By Theorem~\ref{main:legendre-reductions}, the function
\begin{equation}\label{prw21:eq:entire-pullback}
u_3(\zeta_1,\zeta_2)
=U_3\lp\zeta_1\zeta_2,\log|\zeta_2|\rp,
\qquad \zeta_1\zeta_2\notin E\cup\{0\},
\end{equation}
is a smooth strictly plurisubharmonic solution of
\eqref{CMAdensity} on this domain.

\subsection{Continuous extension and verification of the equation}
\label{prw21:step:pluripotential}

The sequences $\xi_k$ and $s_j$ are bounded, and
$\sum_{j=1}^{\infty}2^{-j}+\sum_{k=4}^{\infty}3^{-k}<\infty$.
By \eqref{prw21:eq:capped-source}, the series defining $C$ and $S$
converge locally uniformly on $\bC\times(-\infty,1]$.
Hence $C$ and $S$ are continuous on $\bC\times(-\infty,1]$
and smooth on $\bC\times(-\infty,1)$. Since \(h_W\) is smooth on \((\mathbb C\setminus\{0\})\times\mathbb R\), \(v_3\) extends continuously to \((\mathbb C\setminus\{0\})\times(-\infty,1]\), and this extension is smooth on \((\mathbb C\setminus\{0\})\times(-\infty,1)\). For \(\xi\in E\) and \(x_2\in\mathbb R\), define
\begin{equation}\label{prw21:eq:capped-conjugate}
U_3(\xi,x_2)=4\sup_{t\leq1}\left\{x_2t-v_3(\xi,t)\right\}.
\end{equation}
Since $v_3(\xi,\cdot)$ is strictly convex and grows superlinearly
as $t\to-\infty$, the supremum is attained at a unique $t\leq1$.

We first consider the isolated points \(\xi_k\). At \(z_1=\xi_k\), the \(k\)-th term of \(S\) satisfies
\[
\frac{d}{dt}\left(2\cdot3^{-k}h_W(0,1-t)\right)
=-3^{-k}\log\bigl(2(1-t)\bigr)
\longrightarrow+\infty
\qquad\text{as }t\to1^-.
\]
Since \(\xi_k\) is separated from all other points in $E$, the remaining terms define a smooth function near \((\xi_k,1)\). Therefore
\[
(v_3)_t(\xi_k,t)\longrightarrow+\infty
\qquad\text{as }t\to1^-,
\]
and the supremum defining \(U_3(\xi_k,x_2)\) is attained at \(t<1\) for every \(x_2\in\mathbb R\).

Fix \(x_2^0\in\mathbb R\), and let \(t_0<1\) be the unique value attaining the supremum defining \(U_3(\xi_k,x_2^0)\). Then
\[
(v_3)_t(\xi_k,t_0)=x_2^0.
\]
Since \(t_0<1\), \(v_3\) is smooth near \((\xi_k,t_0)\), with \((v_3)_{tt}(\xi_k,t_0)>0\). The implicit function theorem therefore gives a smooth function \(t=t(z_1,x_2)<1\) near \((\xi_k,x_2^0)\), satisfying
\[
(v_3)_t\bigl(z_1,t(z_1,x_2)\bigr)=x_2.
\]
By strict convexity, \(t(z_1,x_2)\) attains the supremum over \(\mathbb R\) when \(z_1\notin E\), and over \(t\le1\) when \(z_1\in E\). Thus the original definition of \(U_3\) on \((\mathbb C\setminus(E\cup\{0\}))\times\mathbb R\) and the above definition on \(E\times\mathbb R\) both yield
\[
U_3(z_1,x_2)
=4\left(x_2t(z_1,x_2)
-v_3\bigl(z_1,t(z_1,x_2)\bigr)\right)
\]
throughout a neighborhood of \((\xi_k,x_2^0)\). Since \(x_2^0\) was arbitrary, this proves that \(U_3\) is smooth across \(\{\xi_k\}\times\mathbb R\).

We next consider \((1,x_2^0)\), where \(x_2^0\in\mathbb R\). We first compute the endpoint slope to determine whether the supremum defining \(U_3(1,x_2^0)\) is attained at \(t<1\) or at \(t=1\). For \(t<1\),
\[
\begin{aligned}
C_t(1,t)
&=-\sum_{j=1}^{\infty}
2^{-j}\log\bigl(2(s_j-t)\bigr),\\
S_t(1,t)
&=-\sum_{k=4}^{\infty}
3^{-k}
\log\left(1-t+\sqrt{2^{-2k}+(1-t)^2}\right).
\end{aligned}
\]
For \(0\le t<1\), the summands in these two series are bounded in absolute value by constant multiples of \(j2^{-j}\) and \(k3^{-k}\), respectively. These bounds are summable and independent of \(t\), so we may take the limit term by term. Using
\(
\sum_{j=1}^{\infty}2^{-j}=1\)
and 
\(
\sum_{j=1}^{\infty}j2^{-j}=2,
\)
we obtain
\begin{align}\label{endpoint slope}
 x_*:=\lim_{t\to1^-}(v_3)_t(1,t)
=(h_W)_t(1,1)+\log2
+(\log2)\sum_{k=4}^{\infty}k3^{-k}<\infty.
\end{align}
Since \((v_3)_t(1,\cdot)\) is strictly increasing, the supremum defining \(U_3(1,x_2^0)\) is attained at \(t<1\) when \(x_2^0<x_*\), and at \(t=1\) when \(x_2^0\ge x_*\).
Suppose first that \(x_2^0<x_*\), and let \(t_0<1\) be the unique value attaining the supremum. Then
\[
(v_3)_t(1,t_0)=x_2^0,
\qquad
(v_3)_{tt}(1,t_0)>0.
\]
Since \(v_3\) is smooth near \((1,t_0)\), the same implicit function argument used at \(\xi_k\) gives a smooth function \(t=t(z_1,x_2)<1\) near \((1,x_2^0)\), with
\[
U_3(z_1,x_2)
=4\left(x_2t(z_1,x_2)
-v_3\bigl(z_1,t(z_1,x_2)\bigr)\right).
\]
Thus \(U_3\) is smooth near \((1,x_2^0)\).

Now suppose that \(x_2^0\ge x_*\). In this case,
\[
U_3(1,x_2^0)=4\bigl(x_2^0-v_3(1,1)\bigr).
\]
We prove continuity at \((1,x_2^0)\) by considering an arbitrary sequence
\(
(z_1^{(m)},x_2^{(m)})\longrightarrow(1,x_2^0)
\)
in \((\mathbb C\setminus\{0\})\times\mathbb R\).
By the definition of \(U_3\), the choice \(t=1\) gives
\[
U_3(z_1^{(m)},x_2^{(m)})
\ge4\bigl(x_2^{(m)}-v_3(z_1^{(m)},1)\bigr).
\]
The continuity of \(v_3\) on \(t\le1\) therefore gives
\[
\liminf_{m\to\infty}U_3(z_1^{(m)},x_2^{(m)})
\ge4\bigl(x_2^0-v_3(1,1)\bigr)
=U_3(1,x_2^0).
\]
It remains to estimate the upper limit of \(U_3(z_1^{(m)},x_2^{(m)})\). For each \(m\), let \(t_m\) be the unique value at which the supremum defining \(U_3(z_1^{(m)},x_2^{(m)})\) is attained. Thus
\[
U_3(z_1^{(m)},x_2^{(m)})
=4\bigl(x_2^{(m)}t_m-v_3(z_1^{(m)},t_m)\bigr),
\]
and \(t_m\le1\) whenever \(z_1^{(m)}\in E\). 

We first show that \((t_m)\) is bounded. For all sufficiently large \(m\), we have \(|z_1^{(m)}-1|\le1/4\). For each \(z_1\ne0\), the function \(h_W(z_1,\cdot)\) attains its minimum at \(t=(1-|z_1|^2)/2\), so
\begin{equation*}
h_W(z_1,t)\ge-\frac{1+|z_1|^2}{4},
\qquad z_1\ne0,\quad t\in\mathbb R.
\end{equation*}
Direct substitution shows that this bound also holds when \(z_1=0\) and \(t\ge0\). Since \(\{\xi_k\}\) is bounded and the weights are summable, there is a constant \(B>0\) such that
\[
(C+S)(z_1,t)\ge-B
\]
for \(|z_1-1|\le1/4\), with \(t\in\mathbb R\) if \(z_1\notin E\) and \(t\le1\) if \(z_1\in E\). The same bound holds after any one summand is removed. Moreover, \(h_W(z_1,t)\) grows superlinearly as \(|t|\to\infty\), uniformly for \(|z_1-1|\le1/4\). Since \(\{x_2^{(m)}\}\) is bounded, an unbounded subsequence of \(\{t_m\}\) would give
\[
x_2^{(m)}t_m-v_3\bigl(z_1^{(m)},t_m\bigr)
\le x_2^{(m)}t_m-h_W\bigl(z_1^{(m)},t_m\bigr)+B
\longrightarrow-\infty,
\]
contrary to the preceding lower bound for \(U_3(z_1^{(m)},x_2^{(m)})\). Hence \(\{t_m\}\) is bounded.

We next show that \(t_m\to1\). Pass to any convergent subsequence, still indexed by \(m\), and write \(t_m\to\tau\). If \(\tau>1\), then \(t_m>1\), and hence \(z_1^{(m)}\notin E\), for all sufficiently large \(m\). Choose an index \(\ell\) such that \(s_\ell<\tau\). Since
\[
z_1^{(m)}-1\longrightarrow0,
\qquad
s_\ell-t_m\longrightarrow s_\ell-\tau<0,
\]
the explicit formula for \(h_W\) gives
\[
2\cdot2^{-\ell}
h_W\bigl(z_1^{(m)}-1,s_\ell-t_m\bigr)
\longrightarrow+\infty.
\]
The remaining part of \(v_3(z_1^{(m)},t_m)\) has a uniform lower bound, so
\[
v_3(z_1^{(m)},t_m)\longrightarrow+\infty.
\]
Since \(x_2^{(m)}t_m\) remains bounded, this again contradicts the lower bound for \(U_3(z_1^{(m)},x_2^{(m)})\).
If \(\tau<1\), continuity of \(v_3\) at \((1,\tau)\) gives
\[
U_3(z_1^{(m)},x_2^{(m)})
\longrightarrow
4\bigl(x_2^0\tau-v_3(1,\tau)\bigr)
<U_3(1,x_2^0).
\]
The strict inequality follows because \(t=1\) is the unique value attaining the supremum defining \(U_3(1,x_2^0)\). This contradicts the lower limit inequality above. Thus every convergent subsequence of \(\{t_m\}\) has limit \(1\), and consequently \(t_m\to1\).

We now estimate the lower limit of \(v_3(z_1^{(m)},t_m)\). Along the indices for which \(t_m\le1\), continuity on \(t\le1\) gives
\(
v_3(z_1^{(m)},t_m)\rightarrow v_3(1,1).
\)
For those indices with \(t_m>1\), convexity gives
\[
v_3(z_1^{(m)},t_m)
\ge v_3(z_1^{(m)},1)+(t_m-1)
\bigl(v_3(z_1^{(m)},1)-v_3(z_1^{(m)},0)\bigr).
\]
The right-hand side tends to \(v_3(1,1)\), since \(t_m\to1\) and the values of \(v_3\) at \(t=0,1\) depend continuously on \(z_1\). Therefore,
\[
\liminf_{m\to\infty}v_3(z_1^{(m)},t_m)\ge v_3(1,1).
\]
Using \(x_2^{(m)}t_m\to x_2^0\), we conclude that
\[
\limsup_{m\to\infty}U_3(z_1^{(m)},x_2^{(m)})
\le4\bigl(x_2^0-v_3(1,1)\bigr)
=U_3(1,x_2^0).
\]
Together with the lower limit inequality, this proves continuity at \((1,x_2^0)\). Hence \(U_3\) is continuous on \((\mathbb C\setminus\{0\})\times\mathbb R\).

Using this extension, we extend \(u_3\) continuously to all points with \(\zeta_1\zeta_2\ne0\) by
\[
u_3(\zeta_1,\zeta_2)
=U_3\bigl(\zeta_1\zeta_2,\log|\zeta_2|\bigr).
\]
We next extend \(u_3\) smoothly across the coordinate axes.
The functions \(C\) and \(S\) are smooth on \(\{|z_1|<1/2\}\times\mathbb R\). We retain the original variables \((z_1,x_2)\) for \(U_3\) and \((z_1,t)\) for \(v_3\). For \(0<|\zeta_1\zeta_2|<1/2\), the pullback gives \(z_1=\zeta_1\zeta_2\) and \(x_2=\log|\zeta_2|\). The Legendre relation \(x_2=(v_3)_t(z_1,t)\) therefore yields
\begin{align}\label{tag-1}
\log|\zeta_2|
=\frac12\log(t+R)+C_t+S_t,
\end{align}
where \(R=\sqrt{|\zeta_1\zeta_2|^2+t^2}\). Here and throughout the axis extension argument, \(C\), \(S\), and their derivatives are evaluated at \((\zeta_1\zeta_2,t)\). Since \(t(h_W)_t-h_W=R/2\), the identity \(U_3=4(x_2t-v_3)\) gives
\begin{align}\label{tag-2}
u_3=2R+4\bigl(t(C_t+S_t)-C-S\bigr).
\end{align}
Exponentiating \eqref{tag-1} and using \((R-t)(R+t)=|\zeta_1\zeta_2|^2\), we obtain
\[
R+t=|\zeta_2|^2e^{-2(C_t+S_t)},
\qquad
R-t=|\zeta_1|^2e^{2(C_t+S_t)}.
\]
Taking the difference and sum of these identities and using \eqref{tag-2}, we obtain
\begin{align}\label{tag-3}
2t
=|\zeta_2|^2e^{-2(C_t+S_t)}
-|\zeta_1|^2e^{2(C_t+S_t)},
\end{align}
\begin{align}\label{tag-4}
u_3
=|\zeta_2|^2e^{-2(C_t+S_t)}
+|\zeta_1|^2e^{2(C_t+S_t)}
+4\bigl(t(C_t+S_t)-C-S\bigr).
\end{align}
The right-hand sides of \eqref{tag-3} and \eqref{tag-4} are smooth for \(|\zeta_1\zeta_2|<1/2\) and \(t\in\mathbb R\). On this domain, define
\[
F(\zeta_1,\zeta_2,t)
=2t-|\zeta_2|^2e^{-2(C_t+S_t)}
+|\zeta_1|^2e^{2(C_t+S_t)}.
\]
Since \(C_{tt}+S_{tt}\ge0\), we have
\[
\frac{\partial F}{\partial t}
=2+2(C_{tt}+S_{tt})
\left(
|\zeta_2|^2e^{-2(C_t+S_t)}
+|\zeta_1|^2e^{2(C_t+S_t)}
\right)\ge2.
\]
For each fixed \((\zeta_1,\zeta_2)\), the bound \(\partial F/\partial t\ge2\) implies that \(F(\zeta_1,\zeta_2,\cdot)\) is strictly increasing and tends to \(\pm\infty\) as \(t\to\pm\infty\).
Hence the equation \(F(\zeta_1,\zeta_2,t)=0\) has a unique solution \(t=t(\zeta_1,\zeta_2)\). By the implicit function theorem, \(t(\zeta_1,\zeta_2)\) is smooth on \(\{|\zeta_1\zeta_2|<1/2\}\).
Substituting this function into \eqref{tag-4} extends \(u_3\) smoothly across the coordinate axes.

Thus \(u_3\) is continuous on \(\mathbb C^2\) and smooth outside
\[
\mathcal S
=\left\{(\zeta_1,\zeta_2)\in\mathbb C^2\,\middle|\,\zeta_1\zeta_2=1\right\}.
\]
On its original domain \(\{\zeta_1\zeta_2\notin E\cup\{0\}\}\), the complex Hessian of \(u_3\) is positive definite with determinant one. This domain is dense in \(\mathbb C^2\setminus\mathcal S\). By continuity of the complex Hessian, it is therefore positive semidefinite with determinant one throughout \(\mathbb C^2\setminus\mathcal S\), and hence is positive definite there. Consequently, \(u_3\) is strictly plurisubharmonic and satisfies the Monge–Ampère equation on \(\mathbb C^2\setminus\mathcal S\).
The set \(\mathcal S\) is closed and is the \(-\infty\) locus of the plurisubharmonic function \(\log|\zeta_1\zeta_2-1|\), so it is complete pluripolar. The pluripolar extension lemma therefore gives
\begin{equation}\label{prw21:eq:global-equation}
u_3\in PSH(\mathbb C^2)\cap C(\mathbb C^2),
\qquad
(dd^cu_3)^2=\beta_2
\end{equation}
on \(\mathbb C^2\) in the Bedford–Taylor sense. Then by Proposition \ref{prop:bt-viscosity}, \(u_3\) is also a viscosity solution.

\subsection{Properness}

We prove that there exists a constant $K>0$ such that
\begin{equation}\label{prw21:eq:proper-growth}
u_3(\zeta_1,\zeta_2)
\geq2\log\lp1+|\zeta_1|^2+|\zeta_2|^2\rp-K
\end{equation}
on $\bC^2$.

By definition, the supremum defining $U_3$ is taken over
$t\in\bR$ when $z_1\notin E\cup\{0\}$, and over $t\leq1$
when $z_1\in E$. In both cases, $t=0,\pm1$ are admissible.
Thus, for every $z_1\ne0$ and $x_2\in\bR$, we have
$$
U_3(z_1,x_2)
\geq4\max_{t\in\{-1,0,1\}}
\left\{x_2t-v_3(z_1,t)\right\}.
$$
We estimate the three test values separately.

For each fixed $q\geq0$, the function $h_W(z_1,q)$ is
nonincreasing in $|z_1|$. Hence
$$
h_W(z_1,q)\leq h_W(0,q)
=\frac12q\lp\log(2q)-1\rp,
$$
with the convention $0\log0=0$.
For $|t|\leq1$, the quantities $1+2^{-j}-t$ and $1-t$
lie in $[0,5/2]$. Since $h_W(0,q)$ is bounded above on
this interval and the weights in $C$ and $S$ are positive
and summable, there exists a constant $C_0>0$, independent
of $z_1$ and $t$, such that
$$
4(C+S)(z_1,t)\leq C_0,
\qquad z_1\in\bC,\quad |t|\leq1.
$$
Moreover,
$$
\begin{aligned}
h_W(z_1,0)=-\frac12|z_1|,\qquad h_W(z_1,1)\leq\frac12(\log2-1)<0.
\end{aligned}
$$

For $\zeta_1\zeta_2\ne0$, taking $t=0$ gives
$$
\begin{aligned}
u_3(\zeta_1,\zeta_2)
&\geq-4v_3(\zeta_1\zeta_2,0)\\
&=2|\zeta_1\zeta_2|-4(C+S)(\zeta_1\zeta_2,0)\\
&\geq-C_0.
\end{aligned}
$$
Taking $t=1$, we similarly obtain
\begin{equation*}
\begin{aligned}
u_3(\zeta_1,\zeta_2)
&\geq4\log|\zeta_2|-4v_3(\zeta_1\zeta_2,1)\\
&\geq4\log|\zeta_2|-C_0.
\end{aligned}
\end{equation*}
Finally, for $z_1\ne0$, we have
$$
h_W(z_1,-1)=h_W(z_1,1)-\log|z_1|.
$$
Taking $t=-1$ and using this identity gives
\begin{equation*}
\begin{aligned}
u_3(\zeta_1,\zeta_2)
&\geq-4\log|\zeta_2|-4v_3(\zeta_1\zeta_2,-1)\\
&=4\log|\zeta_1|-4h_W(\zeta_1\zeta_2,1)
-4(C+S)(\zeta_1\zeta_2,-1)\\
&\geq4\log|\zeta_1|-C_0.
\end{aligned}
\end{equation*}

Combining these three estimates, we obtain
$$
u_3(\zeta_1,\zeta_2)
\geq4\log\max\{1,|\zeta_1|,|\zeta_2|\}-C_0.
$$
Since
$$
1+|\zeta_1|^2+|\zeta_2|^2
\leq3\max\{1,|\zeta_1|,|\zeta_2|\}^2,
$$
this proves \eqref{prw21:eq:proper-growth} with
$K=C_0+2\log3$ whenever $\zeta_1\zeta_2\ne0$.
By continuity of $u_3$, the same estimate holds on the
coordinate axes, and hence throughout $\bC^2$.

The growth estimate implies that
$u_3(\zeta_1,\zeta_2)\to+\infty$ as
$|\zeta_1|^2+|\zeta_2|^2\to+\infty$.
Thus every sublevel set of $u_3$ is bounded.
Since $u_3$ is continuous, its closed sublevel sets are compact.
Therefore $u_3$ is a continuous proper exhaustion of $\bC^2$.

\subsection{Failure of $W^{2,1}_{\mathrm{loc}}$, $W^{1,p}_{\mathrm{loc}}$ ($p>2$) regularity and Dini continuity}

\subsubsection{Failure of $W^{2,1}_{\mathrm{loc}}$ regularity}
\label{prw21:step:regularity}
Let $D_k=B_{2^{-k-3}}(\xi_k)$ and $I=[x_*+1,x_*+2]$,
where $x_*$ is defined by \eqref{endpoint slope}.
We claim that there exists a constant $a>0$ such that, for all sufficiently
large $k$,
\begin{equation}\label{prw21:eq:boundary-gap}
U_3(z_1,x_2)-U_3(\xi_k,x_2)\geq\frac ak,
\qquad z_1\in\p D_k,\quad x_2\in I.
\end{equation}
In the following estimates, $c$ and $C$ denote positive constants
independent of $a,j,k,z_1,x_2$, and may change from line to line.

To prove the claim, we first estimate $(v_3)_t$ on $\p D_k$ at $t=1+a/k$. We then use the definition of $U_3$ and integrate $x_2-(v_3)_t(z_1,t)$ from $1$ to $1+a/k$ to estimate $U_3(z_1,x_2)$ from below.
By the choice of the centers and radii of $D_k$, for $z_1\in\p D_k$ we have
$$
c2^{-k}\leq|z_1-1|\leq C 2^{-k},
\qquad
|z_1-\xi_j|\geq2^{-j-3}\quad(j\geq4).
$$
Thus \eqref{prw21:eq:total-harmonic} gives
$$
\begin{aligned}
(v_3)_{tt}(z_1,t)
&=\frac1{2\sqrt{|z_1|^2+t^2}}
+\sum_{j\geq1}\frac{2^{-j}}{\sqrt{|z_1-1|^2+(s_j-t)^2}}
+\sum_{j\geq4}\frac{3^{-j}}{\sqrt{|z_1-\xi_j|^2+(1-t)^2}}\\
&\leq\frac12+\sum_{j\geq1}\frac{2^{-j}}{s_j-t}
+8\sum_{j\geq4}\lp \frac 23\rp^j ,
\qquad z_1\in\p D_k,\quad 0\leq t<1.
\end{aligned}
$$
The right-hand side
is integrable on $[0,1]$ and independent of $k$ and $z_1$.
Recall $x_*=\lim_{t\to 1^-}(v_3)_t(1,t)$ in \eqref{endpoint slope}, we have
$$
\begin{aligned}
(v_3)_t\lp z_1,1+\frac ak \rp -x_*
=\,&(v_3)_t(z_1,0)-(v_3)_t(1,0)+\int_0^1
\lp (v_3)_{tt}(z_1,t)-(v_3)_{tt}(1,t)\rp\,dt\\
&+\int_1^{1+a/k}(v_3)_{tt}(z_1,t)\,dt.
\end{aligned}
$$
The first two terms are $o(1)$ by continuity and dominated
convergence, uniformly for $z_1\in\p D_k$.
It suffices to estimate the third term $\int_1^{1+a/k}(v_3)_{tt}(z_1,t)\,dt$. Note that $v_3=h_W+S+C$, and $\lp h_W+S\rp_{tt}(z_1,1+d)$ is uniformly bounded for $z_1\in \p D_k$ and $0\le d\le \frac14$. We need to study the contribution of $C$. 
By direct computations, we have
\begin{equation*}
\begin{aligned}
0\leq C_t\lp z_1,1+\frac ak\rp-C_t(z_1,1)
&\le \sum_j2^{-j}\int_0^{a/k}
\frac{ds}{\sqrt{|z_1-1|^2+(s-2^{-j})^2}}\\
&\leq C\frac ak\log\frac ka
+\frac{4a}{k}\int_{-2a/k}^{2a/k}
\frac{ds}{\sqrt{|z_1-1|^2+s^2}}\\
&\leq C\frac ak\log\frac1{|z_1-1|}\\
&\leq Ca.
\end{aligned}
\end{equation*}
Here we used that there are at most $C\log(k/a)$ indices with
$2^{-j}\geq2a/k$, and each corresponding term is at most
$2a/k$. 
We also used $\log(k/a)\leq\log(1/|z_1-1|)$ and
$$
\int_{-2a/k}^{2a/k}\frac{ds}{\sqrt{|z_1-1|^2+s^2}}
=2\log
\frac{2a/k+\sqrt{4a^2/k^2+|z_1-1|^2}}{|z_1-1|}
\leq C\log\frac1{|z_1-1|}.
$$

Thus
\begin{equation}\label{prw21:eq:boundary-slope}
(v_3)_t\left(z_1,1+\frac{a}{k}\right)
\leq x_*+Ca+o(1)
\leq x_*+\frac12,
\qquad z_1\in\p D_k,
\end{equation}
where we first choose $a>0$ sufficiently small and then
take $k$ sufficiently large.

Now, we can estimate the lower bound of $U_3(z_1,x_2)$ on $\p D_k\times I$. Since $(v_3)_t(z_1,\cdot)$ is nondecreasing, \eqref{prw21:eq:boundary-slope} implies
$$
x_2-(v_3)_t(z_1,t)\geq\frac12,
\qquad
z_1\in\p D_k,\quad x_2\in I,\quad
1\leq t\leq1+a/k.
$$
Taking $t=1+a/k$ in
\eqref{prw21:eq:cylindrical-potential}, we therefore have
$$
\begin{aligned}
U_3(z_1,x_2)
&\geq4\lp x_2(1+a/k)-v_3(z_1,1+a/k)\rp\\
&=4\lp x_2-v_3(z_1,1)\rp
+4\int_1^{1+a/k}
\lp x_2-(v_3)_t(z_1,t)\rp\,dt\\
&\geq4\lp x_2-v_3(z_1,1)\rp+\frac{2a}{k}.
\end{aligned}
$$

We next estimate $U_3$ at $\xi_k$.
The $k$-th term of $S$ contributes
$-3^{-k}\log(2(1-t))$ to $(v_3)_t(\xi_k,t)$,
which tends to $+\infty$ as $t\uparrow1$.
Using monotonicity
for the other terms, we obtain, for $t<1$,
$$
\begin{aligned}
(v_3)_t(\xi_k,t)+3^{-k}\log(2(1-t))
\leq & \frac12\log\left(\sqrt{|\xi_k|^2+1}+1\right)
-\sum_{j=1}^{\infty}2^{-j}
\log\left(\sqrt{2^{-2k}+2^{-2j}}+2^{-j}\right)\\
&\qquad\quad-\sum_{\substack{j\geq4\\j\ne k}}
3^{-j}\log|\xi_k-\xi_j|
=x_*+o(1).
\end{aligned}
$$
Here $|\xi_k-\xi_j|\geq2^{-j-1}$ for $j\ne k$.
Since $x_2\geq x_*+1$ on $I$, for all sufficiently large $k$,
$$
x_2-(v_3)_t(\xi_k,t)
\geq3^{-k}\log(2(1-t)),
\qquad x_2\in I,\quad t<1.
$$
Integrating from $t$ to $1$, we obtain
$$
\begin{aligned}
x_2t-v_3(\xi_k,t)-\lp x_2-v_3(\xi_k,1)\rp
&=-\int_t^1\lp x_2-(v_3)_t(\xi_k,s)\rp\,ds\\
&\leq-3^{-k}\int_{1/2}^1\log(2(1-s))\,ds\\
&=\frac{3^{-k}}2.
\end{aligned}
$$
The logarithmic singularity at $s=1$ is integrable.
Taking the supremum over $t$ and using
\eqref{prw21:eq:capped-conjugate} gives
$$
U_3(\xi_k,x_2)
\leq4\lp x_2-v_3(\xi_k,1)\rp+2\cdot3^{-k},
\qquad x_2\in I.
$$

Finally, $2h_W(\cdot,q)$ is $1$-Lipschitz for $q\geq0$.
Since the weights are summable, $v_3(\cdot,1)$ is Lipschitz,
and hence
$$
|v_3(z_1,1)-v_3(\xi_k,1)|
\leq C|z_1-\xi_k|
\leq C 2^{-k},
\qquad z_1\in\p D_k.
$$
Subtracting the center estimate from the boundary estimate gives
$$
\begin{aligned}
U_3(z_1,x_2)-U_3(\xi_k,x_2)
&\geq\frac{2a}{k}
-4\lp v_3(z_1,1)-v_3(\xi_k,1)\rp-2\cdot3^{-k}\\
&\geq\frac{2a}{k}-C 2^{-k}-2\cdot3^{-k}\\
&\geq\frac ak.
\end{aligned}
$$
This proves \eqref{prw21:eq:boundary-gap}.

By Section~\ref{prw21:step:pluripotential}, $U_3(\cdot,x_2)$ is smooth on
$\overline D_k$ for $x_2\in I$.
Applying Proposition~\ref{prop:real-hessian-bound}, using
\eqref{prw21:eq:boundary-gap}, and integrating over $I$, we obtain
\begin{equation}\label{prw21:eq:harmonic-series}
\int_I\int_{D_k}|\D^2_{z_1}U_3|\,dV(z_1)\,dx_2\geq\frac ck.
\end{equation}

Fix a closed interval $J\subset\bR$ with $0<|J|<2\pi$, and set
$$
B_k=D_k\times\left\{x_2+\sq y_2\,\middle|\,x_2\in I,\ y_2\in J\right\}.
$$
Recall from Section~\ref{partial Legendre} that the map
$\Psi(z_1,z_2)=(z_1e^{-z_2},e^{z_2})$ locally inverts $\Pi$.
It is injective on the strip
$\bC\times(\bR+\sq J)$ and has real Jacobian determinant $1$.
For fixed $\zeta_2$, \eqref{prw21:eq:entire-pullback} gives
$$
|\D^2_{\zeta_1}u_3|\circ\Psi
=e^{2x_2}|\D^2_{z_1}U_3|,
$$
where $\D^2_{\zeta_1}$ is the real Hessian in $\zeta_1$.
Using \eqref{prw21:eq:harmonic-series}, integrating over $J$,
and changing variables by $\Psi$, we obtain
$$
\int_{\Psi(B_k)}\left|\D^2u_3\right|\,dV\geq\frac ck.
$$
The sets $\Psi(B_k)$ are pairwise disjoint and lie in a fixed
compact subset of $\bC^2$.
Suppose that $u_3\in W^{2,1}_{\mathrm{loc}}(\bC^2)$.
Its weak Hessian then agrees with its classical Hessian
on these sets.
Summing the preceding lower bounds contradicts the local
integrability of $\D^2u_3$.
Therefore $u_3\notin W^{2,1}_{\mathrm{loc}}(\bC^2)$.

\subsubsection{Failure of Dini continuity}
Set $x_0=x_*+1$ and
$q_0=\Psi(1,x_0)=(e^{-x_0},e^{x_0})$.
Define
$$
\omega(\rho)=\sup_{|\zeta-q_0|\leq\rho}
|u_3(\zeta)-u_3(q_0)|.
$$
Both $\Psi(\xi_k+2^{-k-3},x_0)$ and $\Psi(\xi_k,x_0)$
lie within distance $C_0 2^{-k}$ of $q_0$,
where $C_0>0$ is independent of $k$.
Taking $z_1=\xi_k+2^{-k-3}$ and $x_2=x_0$ in
\eqref{prw21:eq:boundary-gap} and using the triangle inequality,
we obtain
$$
2\omega(C_0 2^{-k})\geq\frac ak
$$
for all sufficiently large $k$.

For each sufficiently small $\rho>0$, choose $k$ such that
$C_0 2^{-k}\leq\rho<C_0 2^{-k+1}$.
Since $\omega$ is nondecreasing, we have
\begin{equation}\label{prw21:eq:nondini-modulus}
\omega(\rho)\geq\frac{a}{2k}
\geq\frac c{\log(1/\rho)},
\end{equation}
where $c>0$ is independent of $\rho$.
Hence $\int_0^{1/2}\omega(\rho)\,d\rho/\rho=+\infty$,
so $u_3$ is not Dini continuous at $q_0$.

\subsubsection{Failure of $W^{1,p}_{loc}$ for any $p>2$.}
Fix $p>2$ and suppose, to the contrary, that
$u_3\in W^{1,p}_{\mathrm{loc}}(\bC^2)$. We use a similar argument to show the contradiction. 
Recall that $D_k=B_{r_k}(\xi_k)$, $r_k=2^{-k-3}$,
$I=[x_*+1,x_*+2]$. 
By \eqref{prw21:eq:boundary-gap}, there exists $a>0$ such that
$$
U_3(z_1,x_2)-U_3(\xi_k,x_2)\geq\frac ak,
\qquad z_1\in\p D_k,\quad x_2\in I,
$$
for all sufficiently large $k$.
Since $U_3(\cdot,x_2)$ is smooth near $\ol{D_k}$, integration
along the radius and H\"older's inequality give, for every
$\theta\in[0,2\pi]$,
$$
\begin{aligned}
\frac ak
&\leq\int_0^{r_k}
|\D_{z_1}U_3(\xi_k+re^{\sq\theta},x_2)|\,dr\\
&\leq
\left(\int_0^{r_k}
|\D_{z_1}U_3(\xi_k+re^{\sq\theta},x_2)|^p r\,dr\right)^{1/p}
\left(\int_0^{r_k}r^{-1/(p-1)}\,dr\right)^{(p-1)/p}.
\end{aligned}
$$
Since $p>2$,
$$
\int_0^{r_k}r^{-1/(p-1)}\,dr
=\frac{p-1}{p-2}r_k^{(p-2)/(p-1)}.
$$
Taking the power $p$ on the both sides and we obtain
$$
\int_{D_k}|\D_{z_1}U_3(z_1,x_2)|^p\,dV(z_1)
\geq
2\pi\left(\frac{p-2}{p-1}\right)^{p-1}
\frac{a^p}{k^p}r_k^{2-p},
\qquad x_2\in I.
$$

We use the interval \(J\) and the sets \(B_k\) introduced above. For fixed $z_2=x_2+\sq y_2$, the identity
$$
U_3(z_1,x_2)=u_3\bigl(z_1e^{-z_2},e^{z_2}\bigr)
$$
gives
$$
|\D_{\zeta_1}u_3|\circ\Psi
=e^{x_2}|\D_{z_1}U_3|.
$$
Arguing as in the preceding proof of failure of local $W^{2,1}$
regularity, we obtain
$$
\int_{\Psi(B_k)}|\D u_3|^p\,dV
\geq c_p\frac{2^{(p-2)(k+3)}}{k^p},
$$
where $c_p>0$ is independent of $k$.
Since $\Psi(B_k)$ lie in a fixed compact set and the lower bound
diverges for $p>2$, we have
$u_3\notin W^{1,p}_{\mathrm{loc}}(\bC^2)$ for every $p>2$.

\subsection{Conclusion}

By \eqref{prw21:eq:global-equation} and
Proposition~\ref{prop:bt-viscosity}, $u_3$ solves
\eqref{CMAdensity} in both the Bedford--Taylor and viscosity senses.
Together with the properness and the three failures of regularity
proved above, this proves Theorem~\ref{theo:proper-w21-main}.

To prove Corollary~\ref{cor:bounded-sublevel}, choose a closed ball $K$
whose interior contains $q_0$ and all the sets $\Psi(B_k)$ used above,
fix $c>\max_Ku_3$, and let $\Om_*$ be the connected component
of $\{u_3<c\}$ containing $K$.
By properness and continuity, $\Om_*$ is bounded and hyperconvex,
and $v=u_3-c\in PSH(\Om_*)\cap C(\ol{\Om_*})$ solves
\eqref{CMAdensity} in both senses with zero boundary values,
while \eqref{prw21:eq:nondini-modulus} and
\eqref{prw21:eq:harmonic-series} give the asserted failures of regularity since $K\Subset\Om_*$.

\section{A continuous solution with a singular Laplacian}
\label{sec:cantor-laplacian}
In this section, we prove Theorem~\ref{theo:singular-laplacian-main}.
Our aim is to introduce a singular part in the Laplacian
while preserving the complex Monge--Amp\`ere equation.
We first define $g$ as the potential function of a Cantor probability
measure $\nu$ supported on a Cantor set $E$.
We then construct a continuous plurisubharmonic function $W$ such that
$u_4=W+g$ solves the complex Monge--Amp\`ere equation with density $1$
in the Bedford--Taylor sense.
Since $\Delta_{z_1}g=2\pi\nu$ and $W$ is plurisubharmonic,
the distributional Laplacian of $u_4$ has a singular part. 

\subsection{Construction}
Our construction is motivated by the following idea.
Suppose that $W$ is plurisubharmonic on a domain $\Om\subset\bC^2$,
and let $g$ be a subharmonic function depending only on $z_1$.
A direct computation gives
$$
\lp dd^c(W+g)\rp^2
=(dd^cW)^2
+2d_{z_2}d_{z_2}^cW\wedge d_{z_1}d_{z_1}^cg,
$$
and
$$
\Delta(W+g)=\Delta W+\Delta_{z_1}g
\ge\Delta_{z_1}g
$$
in the sense of distributions.
This suggests constructing $W$ and $g$ so that
$(dd^cW)^2=\beta_2$ and the mixed term vanishes.
So adding $g$ preserves the complex Monge--Amp\`ere measure of $W$.
On the other hand, if $\Delta_{z_1}g$ is a nonzero positive singular
measure, then the Laplacian of $u=W+g$ also has a nonzero singular part.

First, we explain the choice of $g$. A natural choice is the logarithmic potential of a
compactly supported finite positive singular measure $\nu$ on $\bC$:
\begin{align}\label{defi-g}
g(z_1)=\int_{\bC}\log|z_1-a|\,d\nu(a),
\qquad z_1\in\bC.
\end{align}
As recalled in Section~\ref{pre}, $g$ is subharmonic and satisfies
$\Delta_{z_1}g=2\pi\nu$ in the sense of distributions.

To ensure that $u=W+g$ is continuous, we construct $W$ to be
continuous and choose $\nu$ so that its logarithmic potential
is continuous.
By the logarithmic potential estimate in Section~\ref{pre},
it suffices to require that
$$
\nu(B_r(z_1))\le Cr^d,
\qquad z_1\in\bC,\quad 0<r\le1,
$$
for some $C>0$ and $d>0$.

For this purpose, we take \(\nu\) to be the Cantor probability measure on \(E=(2C-1)/4\) introduced in Section ~\ref{pre}, where \(C\) is the middle-third Cantor set. 
This is the normalized $d$-dimensional Hausdorff measure on $E$.
Then $\operatorname{supp}\nu=E$.
Since $E$ has planar Lebesgue measure zero, $\nu$ is singular with
respect to planar Lebesgue measure.

With this choice of $\nu$, the potential $g$ in \eqref{defi-g} takes the form
$$
g(z_1)=\int_E\log|z_1-a|\,d\nu(a).
$$
The logarithmic estimate \eqref{log-est} implies that $g$ is continuous.
As recalled in Section~\ref{pre}, $g$ is harmonic on $\bC\setminus E$
and satisfies $\Delta_{z_1}g=2\pi\nu$ in the sense of distributions.

We now explain the choice of $W$. We use the partial Legendre transformation in
$x_2=\text{Re}\,z_2$.
At corresponding smooth points, the relations
$$
x_2=v_t(z_1,t),\qquad
W(z_1,x_2)=4\lp x_2t-v(z_1,t)\rp
$$
give
$$
W_{z_2\bar z_2}=\frac1{v_{tt}},\qquad
\det\lp W_{i\bar j}\rp
=-\frac{\Delta_{z_1}v}{v_{tt}}.
$$
Thus, if $v_{tt}>0$ and
$\Delta_{z_1}v+v_{tt}=0$, then $W$ is plurisubharmonic
and $\det(W_{i\bar j})=1$.
Moreover, for $u=W+g$, we have
$$
\det\lp u_{i\bar j}\rp
=\frac{-\Delta_{z_1}v+\frac14\Delta_{z_1}g}{v_{tt}}.
$$
Since $g$ is harmonic outside $E$, this suggests choosing $v$
to be harmonic on $(\bC\setminus E)\times\bR$
and strictly convex in $t$ on a suitable region.

For this purpose, we introduce a nonnegative even weight $\rho$
with $\rho(0)=0$ and consider the Poisson equation
$$
\Delta_{z_1}v+v_{tt}
=-4\pi\nu\otimes
\lp\chi_{[-L,L]}(t)\rho(t)\,dt\rp
$$
in the sense of distributions on $\bC\times\bR$.
The corresponding Newton potential is
$$
v(z_1,t)=\int_E\int_{-L}^L
\frac{\rho(s)}{\sqrt{|z_1-a|^2+(t-s)^2}}
\,ds\,d\nu(a).
$$
The increase of $\rho$ away from $0$ gives a positive contribution
to $(v_\eps)_{tt}$ near $t=0$, where $v_\eps$ are the smooth
regularizations constructed below. Hence $(v_{\eps})_{tt}$ has positive lower bound uniformly with respect to $\eps\to 0$. 
With a suitable rate of vanishing of $\rho$ and $L$ sufficiently
large, we will obtain $t_{\eps}$ by 
$x_2=(v_\eps)_t(z_1,t_\eps)$ on a fixed interval about $x_2=0$,
along which $(v_\eps)_{tt}$ tends uniformly to infinity as
$\operatorname{dist}(z_1,E)
+\eps\to0$.

Let $g_\eps$ be the standard mollifications of $g$.
Since $\Delta_{z_1}g_\eps$ is supported in the $\eps$-neighborhood
of $E$ and has uniformly bounded mass, the preceding lower bound
will imply
$$
(W_{\eps})_{z_2\bar z_2}(g_{\eps})_{z_1\bar z_1}=\frac{\Delta_{z_1}g_\eps(z_1)}
     {4(v_\eps)_{tt}(z_1,t_\eps(z_1,x_2))}
\to0
\qquad\text{in }L^1_{\mathrm{loc}}
\qquad\text{as }\eps\to0.
$$
This gives the desired convergence of the mixed term.
As the estimates in the next subsection will show,
a suitable choice is $\rho(t)=|t|^\alpha$, $0<\alpha<1-d$. 

Now, we start to explain the explicit construction of $u_4$. Set \(\rho(s)=|s|^{1/4}\) and choose \(L>0\) sufficiently large, as specified in the next subsection.
The corresponding superharmonic Newton potential is
\begin{equation*}
v(z_1,t)=\int_{-L}^{L}\int_E
\frac{|s|^{1/4}}{\sqrt{|z_1-a|^2+(t-s)^2}}
\,d\nu(a)\,ds.
\end{equation*}
In the next subsection, we will prove that $v$ is bounded and
continuous on $\bC\times\bR$ and smooth on
$(\bC\setminus E)\times\bR$.

We then define $W$ by the normalized partial Legendre transformation.
For $z_1\in \bD\setminus E$, let $t=t(z_1,x_2)$ be the function
through $t=0$ determined by
$$
x_2=v_t(z_1,t),\qquad |x_2|<R,
$$
where $R>0$ will be chosen to make $t$ well-defined in the next subsection, 
and set
\begin{equation*}
W(z_1,x_2)=4\lp x_2t-v(z_1,t)\rp,
\qquad z_1\in \bD\setminus E.
\end{equation*}
On $E$, define
$$
W(\xi,x_2)=-4v(\xi,0),
\qquad \xi\in E,\quad |x_2|<R.
$$
We will prove in the next subsection that this extension is continuous. 

Finally, set
\begin{equation*}
\Om=\bD\times\{z_2\in\bC\,|\,|\text{Re}\,z_2|<R\},
\end{equation*}
and define
\begin{equation*}
u_4(z_1,z_2)=W(z_1,\text{Re}\,z_2)+g(z_1).
\end{equation*}
Then $u_4$ is independent of $\text{Im}\,z_2$.
For $z_1\in E$, it is independent of $z_2$.

\subsection{Continuity}
We first verify that $v$ is bounded and continuous on $\bC\times\bR$.
By definition, the measure $d\nu(a)|s|^{1/4}\,ds$ on $E\times[-L,L]$
is finite. To estimate $v(z_1,t)$, we first integrate in $s$. The function $s\to(|z_1-a|^2+s^2)^{-1/2}$ is even and
decreases as $|s|$ increases. Hence, by $|s|^{1/4}\le L^{1/4}$, 
$$
\begin{aligned}
v(z_1,t)
&\le L^{1/4}\int_E\int_{-L}^{L}
\frac{1}{\sqrt{|z_1-a|^2+(t-s)^2}}
\,ds\,d\nu(a)\\
&\le L^{1/4}\int_E\int_{-L}^{L}
\frac{1}{\sqrt{|z_1-a|^2+s^2}}
\,ds\,d\nu(a)\\
&=2L^{1/4}\int_E
\log\frac{L+\sqrt{L^2+|z_1-a|^2}}{|z_1-a|}
\,d\nu(a)\\
&\le 2L^{1/4}\log\bigl(L+\sqrt{L^2+1}\bigr)
+2L^{1/4}\int_E\log^+\frac1{|z_1-a|}\,d\nu(a).
\end{aligned}
$$
Here $\log^+x=\max\{\log x,0\}$. 
By the Cantor mass estimate \eqref{cant:integral-measure-growth} and the layer-cake formula,
we have
$$
\begin{aligned}
\int_E\log^+\frac1{|z_1-a|}\,d\nu(a)
&=\int_0^\infty
\nu\left(\left\{a\in E\,\middle|\,
\log^+\frac1{|z_1-a|}>t\right\}\right)\,dt\\
&=\int_0^\infty\nu(B_{e^{-t}}(z_1))\,dt\\
&\le C\int_0^\infty e^{-dt}\,dt
=\frac Cd,
\qquad z_1\in\bC.
\end{aligned}
$$
To show the continuity, we let $0<\eps\le1/4$, define
$$
v_\eps(z_1,t)=\int_{-L}^{L}\int_E
\frac{|s|^{1/4}\,d\nu(a)\,ds}
{\sqrt{|z_1-a|^2+(t-s)^2+\eps^2}}
.
$$
Then $v_\eps$ is smooth on $\bC\times\bR$. 
By direct computations, 
$$
\begin{aligned}
0\le v(z_1,t)-v_\eps(z_1,t)
&\le L^{1/4}\int_E\int_{\bR}
\left(
\frac1{\sqrt{|z_1-a|^2+(t-s)^2}}
-\frac1{\sqrt{|z_1-a|^2+(t-s)^2+\eps^2}}
\right)
\,ds\,d\nu(a)\\
&=2L^{1/4}\int_E
\left[
\log\frac{s+\sqrt{|z_1-a|^2+s^2}}
{s+\sqrt{|z_1-a|^2+s^2+\eps^2}}
\right]_{s=0}^{s=\infty}
\,d\nu(a)\\
&=L^{1/4}\int_E
\log\left(1+\frac{\eps^2}{|z_1-a|^2}\right)
\,d\nu(a)\\
&=2L^{1/4}\eps^2\int_0^\infty
\frac{\nu(B_r(z_1))}{r(r^2+\eps^2)}\,dr\\
&\le C\eps^d.
\end{aligned}
$$
This indicates that $v_\eps\to v$ uniformly on $\bC\times\bR$,
hence $v$ is continuous. Moreover, on every compact subset of
$(\bC\setminus E)\times\bR$, $|z_1-a|$ is bounded away
from zero for $a\in E$. Differentiating under the integral 
shows that $v$ is smooth and harmonic on $(\bC\setminus E)\times\bR$.

Next, we choose $R>0$ and construct 
$t_\eps$ and $t$ corresponding to $v_\eps$ and $v$, respectively.
We also prove that $t$ extends continuously across $E$.
Let $z_1\in\bD$ and $0<\eps\le1/4$.
Differentiating under the integral, we obtain
$$
(v_\eps)_{tt}(z_1,t)
=\int_E\int_{-L}^L
\frac{\lp2(t-s)^2-|z_1-a|^2-\eps^2\rp|s|^{1/4}}
     {\lp|z_1-a|^2+(t-s)^2+\eps^2\rp^{5/2}}
\,ds\,d\nu(a).
$$
To estimate this integral, we consider the one-variable function
$$
H_\ell(q)=\int_{-\ell}^{\ell}
\frac{|s|^{1/4}}{\sqrt{1+(q-s)^2}}\,ds,
\qquad \ell>0.
$$
By scaling, we can write
$$
v_\eps(z_1,t)
=\int_E
\lp|z_1-a|^2+\eps^2\rp^{1/8}
H_{\frac{L}{\sqrt{|z_1-a|^2+\eps^2}}}
\lp\frac{t}{\sqrt{|z_1-a|^2+\eps^2}}\rp
\,d\nu(a),
$$
and 
$$
(v_\eps)_{tt}(z_1,t)
=\int_E
\lp|z_1-a|^2+\eps^2\rp^{-7/8}
H_{\frac{L}{\sqrt{|z_1-a|^2+\eps^2}}}''
\lp\frac{t}{\sqrt{|z_1-a|^2+\eps^2}}\rp
\,d\nu(a).
$$

Direct computation gives
$$
H_\ell'''(q)
=3\int_{-\ell}^{\ell}
\frac{|s|^{1/4}(q-s)\lp3-2(q-s)^2\rp}
     {\lp1+(q-s)^2\rp^{7/2}}\,ds.
$$
Hence, for $|q|\le1$,
$$
|H_\ell'''(q)|
\le C\int_{\bR}\frac{|s|^{1/4}}{(1+|s|)^4}\,ds
\le C,
$$
where $C$ is independent of $\ell$.
On the other hand, integration by parts gives
$$
H_\ell''(0)
=\frac12\int_0^\ell
\frac{s^{1/4}}{(1+s^2)^{3/2}}\,ds
-\frac{2\ell^{5/4}}{(1+\ell^2)^{3/2}}.
$$
Thus
$$
H_\ell''(0)\longrightarrow
\frac12\int_0^\infty
\frac{s^{1/4}}{(1+s^2)^{3/2}}\,ds>0
\qquad\text{as }\ell\to\infty.
$$
Choose $\ell_0>0$ and $c>0$ so that
$H_\ell''(0)\ge2c$ for $\ell\ge\ell_0$.
Taking $\eta\in(0,1]$ sufficiently small that $C\eta\le c$,
the mean value theorem yields
$$
H_\ell''(q)\ge H_\ell''(0)-C|q|\ge c,
\qquad \ell\ge\ell_0,\quad |q|\le\eta.
$$

We now fix $L\ge 2\ell_0$, and \(
\kappa=\min\left\{\frac{\eta}{\sqrt2},\frac18\right\}.
\) 
Since $E\subset \overline{B_{\frac14}(0)}\subset \bD$, we have
$$
\operatorname{dist}(z_1,E)+\eps\le\frac32,
\qquad
\sqrt{|z_1-a|^2+\eps^2}\le2,
\qquad a\in E.
$$
Moreover,
$$
\operatorname{dist}(z_1,E)+\eps
\le |z_1-a|+\eps
\le\sqrt2\sqrt{|z_1-a|^2+\eps^2}.
$$
Consequently, if
$$
|t|\le\kappa\lp\operatorname{dist}(z_1,E)+\eps\rp,
$$
then
$$
\frac{L}{\sqrt{|z_1-a|^2+\eps^2}}\ge\ell_0,
\qquad
\frac{|t|}{\sqrt{|z_1-a|^2+\eps^2}}
\le\sqrt2\,\kappa\le\eta.
$$
Applying the lower bound for $H_\ell''$ in the scaled integral,
we obtain
$$
(v_\eps)_{tt}(z_1,t)
\ge c\int_E
\lp|z_1-a|^2+\eps^2\rp^{-7/8}\,d\nu(a).
$$

Choose $a_0\in E$ such that
$|z_1-a_0|=\operatorname{dist}(z_1,E)$.
For $a\in E\cap B_{\operatorname{dist}(z_1,E)+\eps}(a_0)$,
the triangle inequality gives
$$
\sqrt{|z_1-a|^2+\eps^2}
\le |z_1-a|+\eps
\le2\lp\operatorname{dist}(z_1,E)+\eps\rp.
$$
The lower mass bound for $\nu$ therefore yields
$$
\begin{aligned}
(v_\eps)_{tt}(z_1,t)
&\ge c\lp\operatorname{dist}(z_1,E)+\eps\rp^{-7/4}
\nu\lp B_{\operatorname{dist}(z_1,E)+\eps}(a_0)\rp\\
&\ge c\lp\operatorname{dist}(z_1,E)+\eps\rp^{d-7/4}
\ge c>0.
\end{aligned}
$$
Here the constants are independent of $z_1$, $t$, and $\eps$.
The mass lower bound remains valid for radii up to $3/2$
after decreasing its constant, since
$\nu(B_r(a_0))=1$ for $r\ge1$.

Since $(v_\eps)_t(z_1,0)=0$, it follows that
$$
\begin{aligned}
(v_\eps)_t
\lp z_1,\kappa\lp\operatorname{dist}(z_1,E)+\eps\rp\rp=
&\int_0^{\kappa(\operatorname{dist}(z_1,E)+\eps)}
(v_\eps)_{tt}(z_1,s)\,ds\\
\ge&
c\kappa\lp\operatorname{dist}(z_1,E)+\eps\rp^{d-3/4}
\ge c_0>0,
\end{aligned}
$$
where $c_0$ is independent of $z_1$ and $\eps$.
The last inequality follows from $d<3/4$ and
$\operatorname{dist}(z_1,E)+\eps\le3/2$.

Fix $0<R<c_0$.
Since $(v_\eps)_t$ is odd and strictly increasing for
$|t|\le\kappa\lp\operatorname{dist}(z_1,E)+\eps\rp$,
the equation
$$
x_2=(v_\eps)_t(z_1,t)
$$
has a unique solution in this interval for every $|x_2|<R$.
Denote this solution by $t_\eps(z_1,x_2)$.
The implicit function theorem shows that $t_\eps$ is smooth
on $\bD\times(-R,R)$, and
\begin{equation}\label{teps-bound}
|t_\eps(z_1,x_2)|
\le\kappa\lp\operatorname{dist}(z_1,E)+\eps\rp
\le\frac{\operatorname{dist}(z_1,E)+\eps}{8}.
\end{equation}

The same argument applies to $\eps=0$ when $z_1\in \bD\setminus E$.
Thus the same $R$ gives a smooth inverse branch $t=t(z_1,x_2)$
on $(\bD\setminus E)\times(-R,R)$ satisfying
$$
x_2=v_t(z_1,t(z_1,x_2)),
\qquad
|t(z_1,x_2)|
\le\kappa\operatorname{dist}(z_1,E).
$$
We extend $t$ to $\bD\times(-R,R)$ by setting
$t(\xi,x_2)=0$ for $\xi\in E$.
The preceding bound implies that $t(z_1,x_2)\to0$ as $z_1\to E$,
uniformly in $x_2$.
Together with the smoothness of $t$ on
$(\bD\setminus E)\times(-R,R)$, this proves that the extension
is continuous.

By the prescribed values of $W$ on $E$, the identity
$$
W(z_1,x_2)
=4\lp x_2t(z_1,x_2)-v(z_1,t(z_1,x_2))\rp
$$
holds throughout $\bD\times(-R,R)$.
Since $\bD$ and $v$ are bounded, we obtain
$$
\begin{aligned}
|W(z_1,x_2)|
&\le4R|t(z_1,x_2)|+4\|v\|_\infty\\
&\le\frac R2\operatorname{dist}(z_1,E)+4\|v\|_\infty
\le C.
\end{aligned}
$$
Since $v$ and $t$ are continuous, we conclude $W$ is continuous.

\subsection{Verification of the equation and the singular Laplacian}
We now verify the complex Monge--Amp\`ere equation. For $0<\eps\le1/4$, $z_1\in \bD$, and
$|t|\le\lp\operatorname{dist}(z_1,E)+\eps\rp/8$, 
$$\Delta_{z_1}v_\eps+(v_\eps)_{tt}
=-3\eps^2\int_E\int_{-L}^{L}
\frac{|s|^{1/4}\,ds\,d\nu(a)}
{\lp|z_1-a|^2+(t-s)^2+\eps^2\rp^{5/2}}
\le0.$$

Let $W_\eps$ be the normalized partial Legendre transform of $v_\eps$
on $\bD\times(-R,R)$, defined by
$$
W_\eps(z_1,x_2)
=4\lp x_2t_\eps-v_\eps(z_1,t_\eps)\rp,
\qquad t_\eps=t_\eps(z_1,x_2).
$$
Since $t_\eps\to t$ and $v_\eps\to v$ locally uniformly,
we obtain $W_\eps\to W$ locally uniformly on $\bD\times(-R,R)$. 
We regard $W_\eps(z_1,\text{Re}\,z_2)$ and $W(z_1,\text{Re}\,z_2)$
as functions on
$\bD\times\{z_2\in\bC\mid |\text{Re}\,z_2|<R\}$,
still denoted by $W_\eps$ and $W$.
In the formulas below, all derivatives of $v_\eps$ are evaluated at
$(z_1,t_\eps(z_1,x_2))$, where $x_2=\text{Re}\,z_2$.
Differentiating the partial Legendre transform gives
$$
\begin{aligned}
&(W_\eps)_{1\bar1}
=-\Delta_{z_1}v_\eps
+\frac{4|(v_\eps)_{z_1t}|^2}{(v_\eps)_{tt}},\\
&(W_\eps)_{1\bar2}
=-\frac{2(v_\eps)_{z_1t}}{(v_\eps)_{tt}},\qquad
(W_\eps)_{2\bar2}=\frac1{(v_\eps)_{tt}}.
\end{aligned}
$$
Taking the determinant, we obtain
\begin{equation*}
\det\bigl((W_\eps)_{j\bar k}\bigr)
=-\frac{\Delta_{z_1}v_\eps}{(v_\eps)_{tt}}
=1-\frac{\Delta_{z_1}v_\eps+(v_\eps)_{tt}}{(v_\eps)_{tt}}
\ge1.
\end{equation*}
Together with $(W_\eps)_{2\bar2}>0$, this proves that $W_\eps$
is strictly plurisubharmonic.

Let $\chi\in C_c^\infty(B_1(0))$ be nonnegative with
$\int_{\bC}\chi\,dV=1$, and set
$\chi_\eps(z_1)=\eps^{-2}\chi(z_1/\eps)$ and
$g_\eps=g*\chi_\eps$.
Then $g_\eps$ is smooth and subharmonic,
$g_\eps\to g$ locally uniformly, and
$$
\Delta_{z_1}g_\eps=2\pi(\nu*\chi_\eps).
$$
The functions
$$
u_{4,\eps}(z_1,z_2)
=W_\eps(z_1,\text{Re}\,z_2)+g_\eps(z_1)
$$
are smooth and strictly plurisubharmonic and converge locally
uniformly to $u_4$.
Since $g_\eps$ depends only on $z_1$,
$$
\det\bigl((u_{4,\eps})_{j\bar k}\bigr)
=1-\frac{\Delta_{z_1}v_\eps+(v_\eps)_{tt}}{(v_\eps)_{tt}}
+\frac{\Delta_{z_1}g_\eps}{4(v_\eps)_{tt}}.
$$

Now, we are going to show the right-hand side converges to $1$ in $L^1$ sense.
Fix $D'\Subset \bD$ and $0<r<R$.
Since
$$
\operatorname{supp}(\Delta_{z_1}g_\eps)
\subset\{z_1\in\bC\mid\operatorname{dist}(z_1,E)\le\eps\},
$$
\eqref{teps-bound} gives $|t_\eps(z_1,\pm r)|\le\eps/4$
on this support.
By the changing of variables, for every fixed $\eps>0$, there is $t=t_{\eps}(z_1,x_2)$. Hence $dt=(t_{\eps})_{x_2}dx_2=\frac{1}{(v_{\eps})_{tt}}dx_2$. We then obtain
$$
\begin{aligned}
\int_{D'\times(-r,r)}
\frac{\Delta_{z_1}g_\eps}{4(v_\eps)_{tt}}
\,dV(z_1)\,dx_2
&=\frac14\int_{D'}\lp t_\eps(z_1,r)-t_\eps(z_1,-r)\rp\Delta_{z_1}g_\eps
\,dV(z_1)\\
&\le\frac{\eps}{8}
\int_{D'}\Delta_{z_1}g_\eps\,dV(z_1)\le\frac{\pi}{4}\eps\to0.
\end{aligned}
$$

To estimate $-\frac{\Delta_{z_1}v_\eps+(v_\eps)_{tt}}{(v_\eps)_{tt}}$, we use \eqref{teps-bound} to obtain
$$
|z_1-a|^2+\lp t_\eps(z_1,x_2)-s\rp^2+\eps^2
\ge\frac12\lp|z_1-a|^2+s^2+\eps^2\rp,
\qquad a\in E.
$$
Using the uniform lower bound for $(v_\eps)_{tt}$,
integrating in $x_2$, we obtain
$$
\begin{aligned}
&0\le
\int_{\bD\times(-r,r)}
-\frac{\Delta_{z_1}v_\eps+(v_\eps)_{tt}}
       {(v_\eps)_{tt}}
\,dV(z_1)\,dx_2\\
&\qquad\le
C\eps^2\int_E\int_{-L}^L |s|^{1/4}
\left(
\int_{\bD}
\frac{dV(z_1)}
     {\lp|z_1-a|^2+s^2+\eps^2\rp^{5/2}}
\right)\,ds\,d\nu(a)\\
&\qquad\le
C\eps^2\int_E\int_{-L}^L |s|^{1/4}
\left(
2\pi\int_0^\infty
\frac{\tau\,d\tau}
     {\lp\tau^2+s^2+\eps^2\rp^{5/2}}
\right)\,ds\,d\nu(a)\\
&\qquad=
\frac{2\pi C}{3}\eps^2
\int_E\int_{-L}^L
\frac{|s|^{1/4}}{(s^2+\eps^2)^{3/2}}
\,ds\,d\nu(a)\\
&\qquad\le
C\eps^{-1}\int_0^\eps s^{1/4}\,ds
+C\eps^2\int_\eps^L s^{-11/4}\,ds\\
&\qquad=
\frac{4C}{5}\eps^{1/4}
+\frac{4C}{7}\eps^2\lp\eps^{-7/4}-L^{-7/4}\rp\le C\eps^{1/4}\longrightarrow0
\qquad\text{as }\eps\to0.
\end{aligned}
$$
Here we have used $\nu(E)=1$, and $C$ is independent of $\eps$.

Hence we choose $I\subset \bR$ to be a bounded interval, 
$$
\int_{D'\times(-r,r)\times I}
\left|\det\bigl((u_{4,\eps})_{j\bar k}\bigr)-1\right|
\,dV(z_1)\,dx_2\,dy_2
\le C\lp\eps^{1/4}+\eps\rp\to0.
$$
This proves
$$
\det\bigl((u_{4,\eps})_{j\bar k}\bigr)\to1
\qquad\text{in }L^1_{\mathrm{loc}}(\Om).
$$
By Bedford--Taylor continuity under local uniform convergence, we obtain
$$
\lp dd^cW\rp^2=\lp dd^cu_4\rp^2=\beta_2
\qquad\text{on }\Om.
$$

We next show that the distributional Laplacian of \(u_4\) has a singular part. Since $W(z_1,\text{Re}\,z_2)$ is plurisubharmonic and
$\Delta_{z_1}g=2\pi\nu$, we have
$$
\Delta u_4\ge2\pi\nu\otimes dV(z_2)
$$
in the sense of distributions on $\Om$.
The right-hand side is a nonzero positive measure supported on
$E\times\{z_2\in\bC\,\big|\,|\text{Re}\,z_2|<R\}$,
which has zero four-dimensional Lebesgue measure.
Thus $\Delta u_4$ has a nonzero singular part and cannot be
represented by a function in $L^1_{\mathrm{loc}}(\Om)$.
Finally, a standard rescaling of the form $c_1u_4(z_1,c_2z_2)$
gives the solution on $\bD^2$ claimed in the theorem.

\begin{rem}
The construction also applies to other compact subsets of \(\mathbb C\) that support measures satisfying the same two-sided mass bounds with \(d\in(0,1)\), provided that the weight \(|s|^{1/4}\) is replaced by \(|s|^\alpha\) for some \(0<\alpha<1-d\). A conversation with a colleague initially led us to
consider fractal sets arising in complex dynamics; we later
chose the Cantor set.

After completing the construction, we noticed a connection with
Mooney's work. Mooney \cite{Mooney16} constructed a convex solution
of the degenerate real Monge--Amp\`ere equation
$0\le\det D^2u\le1$ whose distributional Hessian has a Cantor part.
His construction uses self-similar superpositions of translated
and anisotropically rescaled convex functions. Our construction
combines the logarithmic potential of a Cantor measure with
a partial Legendre transform of a weighted Newton potential,
producing a singular part in the Laplacian while preserving
the constant complex Monge--Amp\`ere density $1$.
\end{rem}

\section{Further examples}\label{sec:further-examples}
In this section, we construct further explicit solutions of geometric and analytic interest using partial Legendre transforms and holomorphic pullbacks.

\subsection{Solutions from Warren’s harmonic dual}
The harmonic dual \(h_W\) of Warren’s solution also gives the following families through reflection, translation, and addition of a harmonic quadratic polynomial.

\subsubsection{Interpolation between Warren’s harmonic dual and its reflection}

We first combine Warren’s harmonic dual with its reflection in \(t\). For \(0\leq\alpha\leq2\), set
\[
v_\alpha(z_1,t)
=\left(1-\frac{\alpha}{2}\right)h_W(z_1,t)
+\frac{\alpha}{2}h_W(z_1,-t).
\]
This function is harmonic on \(\mathbb C^*\times\mathbb R\), with
\[
(v_\alpha)_{tt}
=\frac{1}{2\sqrt{|z_1|^2+t^2}}>0.
\]
The partial Legendre relation gives
\[
x_2=(v_\alpha)_t
=\frac12\log\left(t+\sqrt{|z_1|^2+t^2}\right)-\frac{\alpha}{2}\log |z_1|.
\]
Consequently, the normalized partial Legendre transform yields the explicit family of solutions
\[
U_\alpha(z_1,z_2)
=|z_1|^\alpha e^{2x_2}
+|z_1|^{2-\alpha}e^{-2x_2}.
\]
At \(\alpha=0\), we recover Warren’s solution \(U_W\), while at \(\alpha=2\), we obtain its reflection \(U_W(z_1,-z_2)\). For \(0<\alpha<2\), \(U_\alpha\) extends continuously and plurisubharmonically across \(\{z_1=0\}\). The pluripolar extension result above therefore shows that every member of this family satisfies
\[
\det\bigl((U_\alpha)_{i\bar j}\bigr)=1
\]
on \(\mathbb C^2\) in the Bedford–Taylor sense. For \(0<\alpha<2\), the optimal local Hölder exponent of \(U_\alpha\) is \(\min\{\alpha,2-\alpha\}\), which varies continuously with \(\alpha\) and ranges over \((0,1]\).

\subsubsection{Superpositions of translates of Warren’s harmonic dual}
Let \(\xi\in\mathbb C\) and \(s\in\mathbb R\), and set
\[
v(z_1,t)=h_W(z_1,t)+h_W(z_1-\xi,t-s).
\]
This function is harmonic and strictly convex in \(t\) on
\((\mathbb C\setminus\{0,\xi\})\times\mathbb R\). 
Write
\[
r_0=\sqrt{|z_1|^2+t^2},
\qquad
r_1=\sqrt{|z_1-\xi|^2+(t-s)^2}.
\]
The partial Legendre relation gives
\[
e^{2x_2}=(t+r_0)(t-s+r_1),
\]
and hence
\[
4v^*=4(x_2t-v)
=2(r_0+r_1)+2s\log(t-s+r_1).
\]
Eliminating \(t\), we obtain
\[
\begin{aligned}
U(z_1,z_2)
={}&2\sqrt{s^2+e^{-2x_2}
\bigl(e^{2x_2}+|z_1|^2\bigr)
\bigl(e^{2x_2}+|z_1-\xi|^2\bigr)}\\
&+2s\log
\frac{e^{2x_2}+|z_1-\xi|^2}
{s+\sqrt{s^2+e^{-2x_2}
\bigl(e^{2x_2}+|z_1|^2\bigr)
\bigl(e^{2x_2}+|z_1-\xi|^2\bigr)}}.
\end{aligned}
\]
The function \(U\) extends smoothly to all of \(\mathbb C^2\) and is a strictly plurisubharmonic entire solution of \(\det(U_{i\bar j})=1\). When \(s=0\), the formula reduces to
\[
U(z_1,z_2)
=2e^{-x_2}
\sqrt{\bigl(e^{2x_2}+|z_1|^2\bigr)
\bigl(e^{2x_2}+|z_1-\xi|^2\bigr)}.
\]

\subsubsection{Addition of a harmonic quadratic polynomial}
For \(m\geq0\), set
\[
v_m(z_1,t)=h_W(z_1,t)+m\left(t^2-\frac12|z_1|^2\right).
\]
This function is harmonic and strictly convex in \(t\) on \(\mathbb C^*\times\mathbb R\). We show that applying the normalized partial Legendre transform to \(v_m\), followed by pullback under \(\Pi\), yields LeBrun’s Taub–NUT potential. This is the same procedure used in Section \ref{sec:proper-w21} to construct a continuous exhaustion solution that fails to belong to \(W^{2,1}_{\mathrm{loc}}\).

The Taub–NUT metric originates in the Lorentzian solutions of Taub \cite{Taub51} and Newman, Tamburino and Unti \cite{NTU63}. Its self-dual Euclidean version, studied by Hawking \cite{Hawking77} as a gravitational instanton, is a complete, nonflat hyperkähler metric on \(\mathbb R^4\). LeBrun \cite[p.302]{LeB} gave the following global Kähler potential for the Taub--NUT metric on \(\mathbb C^2\):
\[
\Phi_m(\zeta_1,\zeta_2)
=p^2+q^2+m\left(p^4+q^4\right),
\]
where \(p,q\geq0\) are determined by
\[
|\zeta_1|=p\,e^{m(p^2-q^2)},
\qquad
|\zeta_2|=q\,e^{m(q^2-p^2)}.
\]
\(\Phi_m\) is a smooth, strictly plurisubharmonic solution of \(\det((\Phi_m)_{i\bar j})=1\) on \(\mathbb C^2\). 

Let \(U_m(z_1,z_2)=4v_m^*(z_1,\operatorname{Re}z_2)\) be the normalized partial Legendre transform of \(v_m\). We prove that, on \((\mathbb C^*)^2\),
\[
(U_m\circ\Pi)(\zeta_1,\zeta_2)
=4v_m^*\bigl(\zeta_1\zeta_2,\log|\zeta_2|\bigr)
=\Phi_m(\zeta_1,\zeta_2).
\]

The partial Legendre relation gives
\[
x_2=(v_m)_t
=\frac12\log\left(t+\sqrt{|z_1|^2+t^2}\right)+2mt.
\]
Hence
\[
4v_m^*(z_1,x_2)=2\sqrt{|z_1|^2+t^2}+4mt^2+2m|z_1|^2.
\]

Now take \((\zeta_1,\zeta_2)\in(\mathbb C^*)^2\), and let \(p,q\) be the corresponding parameters in LeBrun’s formula. Then \(p,q>0\). Set \(z_1=\zeta_1\zeta_2\) and \(x_2=\log|\zeta_2|\). The defining relations for \(p\) and \(q\) give
\[
|z_1|=pq,\qquad x_2=\log q+m(q^2-p^2).
\]
Comparing this expression for \(x_2\) with the partial Legendre relation, we observe that
\[
t=\frac{q^2-p^2}{2}
\]
satisfies that relation. By strict convexity, this is the unique value of \(t\). Consequently,
\[
\begin{aligned}
(U_m\circ\Pi)(\zeta_1,\zeta_2)
&=p^2+q^2+m(q^2-p^2)^2+2mp^2q^2\\
&=p^2+q^2+m\left(p^4+q^4\right)\\
&=\Phi_m(\zeta_1,\zeta_2).
\end{aligned}
\]
The above construction can be summarized by the following sequence of transformations:
\[
\rho
\xrightarrow{\;\Psi^*\;}
U_W
\xrightarrow{\;\text{inverse normalized PL}\;}
h_W
\xrightarrow{\;+\;mQ\;}
v_m
\xrightarrow{\;\text{normalized PL}\;}
U_m
\xrightarrow{\;\Pi^*\;}
\Phi_m.
\]
Here PL stands for partial Legendre transformation, and
\[
\rho(\zeta_1,\zeta_2)=|\zeta_1|^2+|\zeta_2|^2,
\qquad
Q(z_1,t)=t^2-\frac12|z_1|^2.
\]

\begin{rem}
While completing this paper, we became aware of the classical work of Hitchin, Karlhede, Lindstr\"om and Ro\v{c}ek \cite[§2(A)]{HKLR87}, where hyperk\"ahler metrics are constructed by applying partial Legendre transformations, with different sign and normalization conventions, to superpositions of translates of the function
\[
-2h_W(2z_1,t)+t\log(2|z_1|).
\]
They note that the logarithmic term can be dropped, since it only induces a local holomorphic coordinate change after the transform. They also point out that adding a harmonic quadratic polynomial to the basic function yields Taub–NUT metrics after the Legendre transformation.

Here we directly use Warren’s harmonic dual \(h_W\) and explicitly compute the normalized partial Legendre transform of the sum of two translates. We also give an explicit holomorphic change of variables identifying the transform of \(h_W+mQ\) with LeBrun’s Taub–NUT potential.
\end{rem}

\subsection{Dual of a cubic harmonic polynomial}
Consider the cubic harmonic polynomial
\[
v(z_1,t)=\frac23t^3-t|z_1|^2.
\]
On \(\{t>0\}\), we have \(v_{tt}=4t>0\), and the partial Legendre relation is
\[
x_2=v_t=2t^2-|z_1|^2.
\]
Its normalized partial Legendre transform is therefore
\[
u(z_1,z_2)
=4v^*(z_1,\re z_2)
=\frac{4\sqrt2}{3}\left(|z_1|^2+\re z_2\right)^{3/2}.
\]
This is a smooth, strictly plurisubharmonic solution of
\[
\det\bigl(u_{i\bar j}\bigr)=1
\]
on \(\{(z_1,z_2)\in\mathbb C^2\,|\,|z_1|^2+\re z_2>0\}\). 

The corresponding two-dimensional real solution, after normalization, is the function
\[
w(x,y)=\frac23(x^2+y)^{3/2},
\]
defined on \(\Omega=\{(x,y)\in\mathbb R^2\,|\,x^2+y>0\}\). Its Hessian is positive definite on \(\Omega\), and it satisfies
\[
\det D^2w=1.
\]

This function gives an explicit counterexample to global \(C^2\) regularity for \(C^{2,1}\) boundary data. Indeed, consider the disk
\[
D=\left\{(x,y)\in\mathbb R^2\,\middle|\,x^2+(y-1)^2<1\right\}.
\]
Note that \(w\) is smooth and strictly convex in \(D\). Near the origin, the boundary is given by
\[
y(x)=1-\sqrt{1-x^2},
\]
and
\[
w(x,y(x))
=\frac23|x|^3
\left(1+\frac1{1+\sqrt{1-x^2}}\right)^{3/2}.
\]
Consequently,
\[
w|_{\partial D}\in C^{2,1}(\partial D)\setminus C^3(\partial D).
\]
However,
\[
w_{yy}(0,y)=\frac1{2\sqrt{y}}\longrightarrow+\infty
\qquad\text{as }y\to0^+.
\]
This shows that the $C^3$ assumption on the boundary data in the global
regularity theorems \cite[Theorem~1]{W96} and \cite[Theorem~1.1]{TW08}
cannot be weakened to $C^{2,1}$, even on a disk with constant right-hand
side. Wang \cite[Example~1]{W96} previously established this failure of
boundary regularity through a Dirichlet problem; here the solution is
given explicitly.

\subsection{A new family of entire toric solutions}
Let \(n\geq2\), and let \(A=(a_{ij})\) be an invertible $n\times n$ row-stochastic matrix, that is,
\[
a_{ij}\geq0,
\qquad
\sum_{j=1}^n a_{ij}=1
\quad\text{for }i=1,\ldots,n.
\]
Define
\[
u(z_1,\ldots,z_n)=
|\det A|^{-\frac{2}{n}}
\sum_{j=1}^n\prod_{i=1}^n|z_i|^{2a_{ij}}.
\]
Thus \(u\) is continuous on \(\mathbb C^n\) and toric. We show that it solves
\[
\det\bigl(u_{i\bar k}\bigr)=1
\]
on \(\mathbb C^n\) in both the Bedford–Taylor and viscosity senses. The verification is conveniently carried out using a holomorphic pullback interpretation of \(u\).

For any \(p\in(\mathbb C\setminus\{0\})^n\), choose a sufficiently small neighborhood \(U\) of \(p\) and local branches of \(\log z_i\) on \(U\). Define the holomorphic map
\[
F:U\longrightarrow\mathbb C^n,
\qquad
F_j(z)=|\det A|^{-\frac{1}{n}}\prod_{i=1}^nz_i^{a_{ij}},
\quad j=1,\ldots,n,
\]
where the powers are taken with respect to these branches. Let \(\rho(w)=|w|^2\). Since \(u|_U=\rho\circ F\) and \(F\) is holomorphic,
\[
dd^cu=F^*(dd^c\rho).
\]
The row-sum condition gives
\[
\det D_{\mathbb C}F
=
\frac{\det A}{|\det A|}
\prod_{i=1}^nz_i^{\sum_{j=1}^na_{ij}-1}
=
\frac{\det A}{|\det A|}.
\]
Thus \(F\) is locally biholomorphic and preserves the standard volume form. Consequently, \(u\) is strictly plurisubharmonic on \(U\), and
\[
(dd^cu)^n=F^*\bigl((dd^c\rho)^n\bigr)=F^*\beta_n=\beta_n.
\]
Since \(p\) is arbitrary, the equation holds on \((\mathbb C\setminus\{0\})^n\).

Set
\(
Z=\{z\in\mathbb C^n\,|\,z_1\cdots z_n=0\}.
\)
Since \(u\) is continuous and locally bounded near \(Z\), which is closed and locally complete pluripolar, Lemma \ref{lem:pluripolar-extension} gives
\[
u\in PSH(\mathbb C^n)\cap C(\mathbb C^n),
\qquad
(dd^c u)^n=\beta_n
\quad\text{on }\mathbb C^n.
\]
By Proposition \ref{prop:bt-viscosity}, \(u\) is also a viscosity solution.

In complex dimension two, write
\[
A=
\begin{pmatrix}
a &\,1-a\\
b &\,1-b
\end{pmatrix},
\qquad
a,b\in[0,1],\quad a\ne b.
\]
The resulting family is
\[
u_{a,b}(z_1,z_2)
=
\frac{
|z_1|^{2a}|z_2|^{2b}
+
|z_1|^{2-2a}|z_2|^{2-2b}
}{|a-b|}.
\]

This recovers the two families of entire toric solutions in complex dimension two from our earlier work \cite{WangWang25}. For \(0\leq\beta<1\), taking \(a=\beta/2\) and \(b=1/2\) gives the first family
\[
v_\beta(z_1,z_2)
=
\frac{2}{1-\beta}|z_2|
\left(|z_1|^\beta+|z_1|^{2-\beta}\right).
\]
For \(0\leq\gamma<2\), we take \(a=\gamma/2\) and \(b=1\) and adjust the coefficients to obtain the second family
\[
w_\gamma(z_1,z_2)
=
|z_1|^\gamma|z_2|^2
+
\frac{4}{(2-\gamma)^2}|z_1|^{2-\gamma}.
\]

\end{document}